\newtheorem{thm}{Theorem}[section]
\newtheorem{coro}[thm]{Corollary}
\newtheorem{lemma}[thm]{Lemma}
\newtheorem{prop}[thm]{Proposition}
\theoremstyle{definition}
\newtheorem{defn}[thm]{Definition}
\newtheorem{remark}[thm]{Remark}
\newcommand{\E}{\mathcal{E}}
\newcommand{\K}{\mathcal{K}}
\newcommand{\A}{\mathcal{A}_\omega^{A,B}}
\newcommand{\Am}{\mathcal{A}_{\omega,m}^{A,B}}
\newcommand{\M}{\mathcal{M}_\omega^{A,B}}
\newcommand{\Mm}{\mathcal{M}_{\omega,m}^{A,B}}
\newcommand{\F}{\mathcal{F}_\omega}
\newcommand{\Fm}{\mathcal{F}_{\omega,m}}
\newcommand{\Rt}{{\tilde{\mathbb{R}}}_m^N}
\newcommand{\uom}{u_\omega^{A,B}}
\newcommand{\uomm}{u_{\omega,m}^{A,B}}
\newcommand{\N}{\mathbb N}
\newcommand{\setR}{{\mathbb{R}^N}}
\newcommand{\eps}{\varepsilon}
\newcommand{\di}{\mathop{}\!\mathrm{d}}
\newcommand{\tR}{{\tilde{\mathbb{R}}}^N}
\let\epsilon\varepsilon
\DeclareMathOperator{\dist}{dist}
\numberwithin{equation}{section}
\begin{document}

\title[]{Minimizers for a fractional Allen-Cahn equation in a periodic medium}


\author{Dayana Pagliardini}

\address{
Scuola Normale Superiore,
Piazza dei Cavalieri 7,
56126 Pisa, Italy}
\email{dayana.pagliardini@sns.it}

\begin{abstract}
We aim to study the solutions of a fractional mesoscopic model of phase transitions in a periodic medium. After investigating the geometric properties of the interface of the associated minimal solutions, we construct minimal interfaces lying to a strip of prescribed direction and universal width.
\end{abstract}

\date{September 24, 2017}

\subjclass[2010]{Primary: 35R11, 35A15, 35B10. Secondary: 82B26, 35B65}
\keywords{Nonlocal energies, phase transition, planelike minimizers, fractional Allen-Cahn, periodic medium}

\maketitle

\section{Introduction}\label{sec1}

For $N\ge 2$ we consider the energy functional

\begin{equation}\label{funct}
\mathcal{E}(u):=\frac{1}{2}\int_{\setR}\int_\setR |u(x)-u(y)|^2K(x,y)\di x\di y+\int_\setR W(x,u(x))\di x+\int_\setR H(x)u(x)\di x.
\end{equation}
The function $K:\setR \times \setR\rightarrow [0,+\infty]$ is measurable, symmetric and comparable to the kernel of the fractional laplacian, i.e.\
\begin{equation}\label{K1}
K(x,y)=K(y,x)\quad \text{ for a.e.}\; x,y\in \setR \tag{K1}
\end{equation}
and 
\begin{equation}\label{K2}
\frac{\lambda\chi_{(0,1)}(|x-y|)}{|x-y|^{N+2s}}\le K(x,y)\le \frac{\Lambda}{|x-y|^{N+2s}}\quad \text{ for a.e.}\; x,y\in \setR,\tag{K2}
\end{equation}
for some $\Lambda\ge\lambda>0$ and $s\in (0,1)$.

The function $H\in L^\infty(\mathbb{R}^N)$ is a small perturbation of the fractional Allen-Cahn functional. So we assume that
\begin{equation}\label{Hnormapiccola}
\sup_{\mathbb{R}^N}|H|\le \eta,\tag{H1}
\end{equation}
for $\eta$ sufficiently small, depending on $N$ and on the structural constants of the problem. We also assume that $H$ has zero-average and it is $\mathbb{Z}^N$-periodic, i.e.\
\begin{equation}\label{H0average}
\int_{[0,1]^N} H(x)\di x=0\tag{H2}
\end{equation}
and
\begin{equation}\label{Hperiodica}
H(x+k)=H(x)\qquad \forall\; k\in \mathbb{Z}^N.\tag{H3}
\end{equation}

The map $W:\setR \times \mathbb{R}\rightarrow [0,+\infty)$ is the standard double well potential, i.e.\ it is a bounded measurable function such that
\begin{equation}\label{W1}
W(x,\pm 1)=0\quad \text{ for a.e.}\; x\in \setR, \tag{W1}
\end{equation}
and for any $\theta\in [0,1)$
\begin{equation}\label{W2}
\inf_{\substack {x\in \setR\\ |r|\le \theta}}W(x,r)\ge\gamma(\theta)\tag{W2} 
\end{equation}
where $\gamma:[0,1)\rightarrow \mathbb{R}^+$ is a non-increasing function. We assume that $W$ is differentiable in the second component, with partial derivative locally bounded in $r\in \mathbb{R}$ and uniformly in $x\in \setR$, that is
\begin{equation}\label{W3}
W(x,r)|W_u(x,r)|\le W^*\quad \text{for a.e.}\; x\in \setR\text{ and any }r\in [-1,1] \tag{W3}
\end{equation}
for some $W^*>0$.
Moreover, since we want to model a periodic environment, we require both $K$ and $W$ to be periodic under integer translations:
\begin{equation}\label{K3}
K(x+k,y+k)=K(x,y)\quad \text{for a.e.}\; x,y\in \setR\text{ and any }k\in \mathbb{Z}^N\tag{K3}
\end{equation}
and 
\begin{equation}\label{W4}
W(x+k,r)=W(x,r)\quad \text{for a.e.}\; x\in \setR\text{ and any }k\in \mathbb{Z}^N, \tag{W4}
\end{equation}
for any fixed $r\in \mathbb{R}$.
Finally we require that 
\begin{equation}\label{W5}
W_u(x,-1-r)\le -c\quad \text{and}\quad W_u(x,1+r)\ge c \tag{W5}
\end{equation}
for any $r\ge \delta_0$ with $\delta_0 \in (0,1/10)$, and  suitable $c>0$, and
\begin{equation}\label{W6}
W(x,-1+r)=W(x,1+r)\tag{W6}
\end{equation}
for any $r\in[-\delta_0,\delta_0]$.

The functional $\eqref{funct}$ is composed by three terms (the first two give us the fractional Allen-Cahn equation):
\begin{itemize}
\item a ``kinetic interaction term'' $|u(x)-u(y)|^2K(x,y)$, which penalizes the phase changes of the system;
\item a double-well potential term $W$, which penalizes considerable deviations from the ``pure phase'' $\pm 1$;
\item a ``mesoscopic term'' $Hu$, which is ``neutral'' in the average and at each point it prefers one of the two phases.
\end{itemize}
Hence we have a model of phase coexistence (the ``pure phases'' respresented by $+1$ and $-1$) where $u:\setR \rightarrow [-1,+1]$ is a state parameter.

The fractional exponent $s\in (0,1)$ represents the fact that this model considers long-range particle interactions (and it can produce, depending on the value of $s$, local or non-local effect, see \cites{SV12, SV}).

The problem of plane-like minimizers, i.e.\ minimizers that stay at a finite distance from a plane, along every direction, is widely studied in recent years.

First of all we recall \cite{CDLL} where the authors considered an elliptic integrand $\mathfrak{I}$ in $\setR$ (but also functionals involving volume terms and suitable manifolds), periodic under integer translations, and they proved that for all plane in $\setR$ there exists at least one minimizer of $\mathfrak{I}$ with a bounded distance from this plane.

Some years later, in \cite{CV}, Cozzi and Valdinoci considered for $N \ge 2$ the nonlocal energy 
\begin{equation}\label{valdifunct}
\mathtt{E}(u):=\frac{1}{2}\int_{\setR}\int_\setR |u(x)-u(y)|^2K(x,y)\di x\di y+\int_\setR W(x,u(x))\di x,
\end{equation}
with $K$ and $W$ satisfying $\eqref{K1}$-$\eqref{K3}$ and $\eqref{W1}$-$\eqref{W6}$ respectively.

They constructed minimizers of this functional with interfaces in a slab of prescribed direction and bounded size (independently on the direction). 

The analogous result for $s=1$ was proved in \cite{V}, where the first addendum of $\mathtt{E}$ is replaced by 
\[
\int \langle A(x)\nabla u(x),\nabla u(x)\rangle \di x
\]
with $A$ bounded and uniformly elliptic matrix; some other generalizations were analyzed in \cites{PV, DLLV, BV}.

In particular, in \cite{NV}, Novaga and Valdinoci studied the energy 
\begin{equation}
E_\Omega(u):=\int_\Omega \Big( |\nabla u(x)|^2+W(x,u)+H(x)u(x)\Big)\di x,
\end{equation}
where $\Omega \subseteq \setR$ is a bounded domain, $u\in W^{1,2}(\Omega)$, $H$ and $W$ satisfy $\eqref{Hnormapiccola}$-$\eqref{H0average}$ and $\eqref{W1}$-$\eqref{W6}$ respectively.

They investigated geometric properties of the interfaces of the associated minimal solutions and they gave density estimates for the level sets. This allowed them to construct, in the periodic setting, minimal interfaces lying to a prescribed strip. These results are related with a PDE version of the Mather theory for dynamical systems, see \cites{M,Mo, B, CDLL, V, RS}.

In the same spirit of \cite{NV} and \cite{CV} we want to study functional $\eqref{funct}$, that is ``the fractional counterpart'' of $E_\Omega$ and is given by the functional $\mathtt{E}$ with the addition of the ``mesoscopic term''.

Our main goal is to construct minimal interfaces lying to a strip of universal size. Roughly speaking, given any vector $\omega \in \setR \setminus \{0 \}$ we look for minimizers having most of the transition between the pure states in a strip orthogonal to $\omega$ and of universal width:

\begin{thm}\label{Mainth}
Let $s\in (0,1)$ and $N\ge 2$. Suppose that the kernel $K$ and the potential $W$ satisfy $\eqref{K1}$-$\eqref{K3}$ and $\eqref{W1}$-$\eqref{W6}$ respectively.

Given $\theta \in (0,1)$, there exists a positive constant $M_0$ depending only on $\theta$ and on universal quantities, such that, for any $\omega \in \setR \setminus \{ 0\}$, there is a class $A$ minimizer $u_\omega$ of the functional $\E$ for which we have
\[ 
\{|u_\omega|<\theta \}\subset \Big \{ x\in \setR : \frac{\omega}{|\omega|}\cdot x\in [0,M_0]\Big \}.
\]
Moreover,
\begin{itemize}
\item if $\omega \in \mathbb{Q}^N \setminus \{0 \}$, $u_{\omega}$ is periodic with respect to $\sim_{\omega}$;
\item if $\omega \in \setR \setminus \mathbb{Q}^N$, $u_\omega$ is the uniform limit on compact subsets of $\setR$ of a sequence of periodic class $A$ minimizers.
\end{itemize}
\end{thm}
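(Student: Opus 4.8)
\emph{Overall scheme.} The plan is to argue along the De Giorgi--Caffarelli--de la Llave lines for plane-like minimizers, in the nonlocal form of Cozzi--Valdinoci \cite{CV} and with the mesoscopic term handled as in Novaga--Valdinoci \cite{NV}. The construction proceeds first for rational directions, via a constrained periodic minimization together with a \emph{universal} bound on the width of the transition set, and is then extended to irrational directions by approximation and compactness.

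\emph{Step 1: constrained minimizers for rational directions.} Fix $\omega\in\mathbb{Q}^N\setminus\{0\}$ and let $\sim_\omega$ be the equivalence relation generated by the sublattice $\mathbb{Z}^N\cap\omega^\perp$, with fundamental domain $D_\omega$ bounded in the directions orthogonal to $\omega$. For each $M>0$ I would consider the competitors $u\colon\setR\to[-1,1]$ that are $\sim_\omega$-periodic and satisfy $u\equiv-1$ on $\{\omega\cdot x/|\omega|\le 0\}$ and $u\equiv+1$ on $\{\omega\cdot x/|\omega|\ge M\}$. On this class the energy per period $\E(\,\cdot\,;D_\omega)$ is finite and bounded above, uniformly in $M$ and $\omega$, by the value at the fixed competitor that transitions on $\{0\le\omega\cdot x/|\omega|\le 1\}$; here the zero-average assumption \eqref{H0average} and the periodicity \eqref{Hperiodica} are exactly what make the term $\int_{D_\omega}Hu$ convergent. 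A direct-method argument, relying on the lower semicontinuity of the Gagliardo-type seminorm under $L^1_{\mathrm{loc}}$ convergence, produces a minimizer $u_{\omega,M}$.

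\emph{Step 2: universal width of the interface, and passage to class $A$.} This is the heart of the matter. From the uniform energy estimate $\E(u_{\omega,M};B_R)\le C R^{N-1}$ (with $C$ universal), together with the density estimates for the superlevel and sublevel sets established in the earlier sections --- in particular the ``clean ball'' alternative, by which a majority in measure of one pure phase in a ball $B_{\rho_0}$ forces $|u_{\omega,M}|>\theta$ throughout $B_{\rho_0/2}$ --- one runs the clean-ball argument of De Giorgi: were the transition set $\{|u_{\omega,M}|<\theta\}$ to have width larger than a universal constant $M_0=M_0(\theta)$, packing disjoint balls into it would force the potential term, hence $\E(u_{\omega,M};D_\omega)$, to exceed the uniform bound, a contradiction once $M_0$ is chosen large. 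Thus $\{|u_{\omega,M}|<\theta\}$ lies in a strip of width $M_0$. Letting $M\to\infty$ and recentering along $\omega$ so that the interface stays in a fixed compact region --- this uses the structural symmetry \eqref{W6} and the zero-average of $H$, which make the two bulk phases energetically equivalent, so that the transition layer may be shifted by any period at no leading-order cost and hence can be kept away from \emph{both} prescribed walls --- one extracts, via the uniform interior regularity of minimizers (again from the earlier sections), a locally uniform limit $u_\omega$; a standard argument then upgrades it to a genuine class $A$ minimizer of $\E$, still $\sim_\omega$-periodic, with $\{|u_\omega|<\theta\}\subset\{0\le\omega\cdot x/|\omega|\le M_0\}$ after the normalizing translation. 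The delicate points, and the main obstacle, are that all constants above (energy bounds, regularity, density estimates) must be \emph{genuinely universal}, in particular independent of $\omega$ --- which enters only through the transverse period cell --- and that the threshold $\eta$ in \eqref{Hnormapiccola} be fixed once and for all so that the mesoscopic term perturbs none of these.

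\emph{Step 3: irrational directions.} Given $\omega\in\setR\setminus\mathbb{Q}^N$, I would pick $\omega_j\in\mathbb{Q}^N\setminus\{0\}$ with $\omega_j/|\omega_j|\to\omega/|\omega|$ and let $u_j:=u_{\omega_j}$ be the periodic class $A$ minimizers from Step 2, all with $\{|u_j|<\theta\}\subset\{0\le\omega_j\cdot x/|\omega_j|\le M_0\}$ for the \emph{same} $M_0$. Since $|u_j|\le1$ and the $u_j$ satisfy uniform interior H\"older estimates, a subsequence converges locally uniformly to some $u_\omega$; lower semicontinuity of $\E$ together with the locally uniform convergence shows $u_\omega$ is a class $A$ minimizer, and if $\omega\cdot x/|\omega|\notin[0,M_0]$ then $\omega_j\cdot x/|\omega_j|\notin[0,M_0]$ for $j$ large, so $|u_j(x)|\ge\theta$ and hence $|u_\omega(x)|\ge\theta$, i.e.\ $\{|u_\omega|<\theta\}\subset\{0\le\omega\cdot x/|\omega|\le M_0\}$. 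By construction $u_\omega$ is the local uniform limit of periodic class $A$ minimizers, which would complete the proof.
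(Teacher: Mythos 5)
Your overall scheme (rational directions first, a universal width bound, then approximation for irrational directions) is the paper's, and your Step 3 is essentially the paper's Subsection on irrational directions; but Steps 1 and 2 contain genuine gaps. In Step 1 you impose the hard data $u\equiv\pm1$ and claim that the energy per period of a plane-like competitor is finite and uniformly bounded. Both points fail as stated. Since $H\not\equiv 0$, the pure phases are not $\pm1$: the periodic cell problem is minimized by functions $u_\pm$ with $\|u_\pm\mp1\|_{L^\infty}\le\delta_0$, so the paper works with the relaxed constraints $u\ge 1-\delta_0$, $u\le-1+\delta_0$ and with the renormalized functional $\mathcal{F}_\omega$, obtained by subtracting the energy of $u_+$ (this is where \eqref{W6}, which yields $\E(u_+,Q)=\E(u_-,Q)$, enters); with equality constraints to $\pm1$ the minimizer never satisfies the constraint of a wider strip automatically, so the crucial ``unconstrained'' mechanism cannot be run. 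Moreover, under \eqref{K2} alone the interaction between the two bulk half-slabs of a plane-like competitor behaves, per unit cross-section, like $\int_1^\infty t^{-2s}\di t$, which diverges for $s\le 1/2$: finiteness of the (even renormalized) energy of the competitor genuinely requires the extra decay \eqref{K4} with $\beta>1$, which the paper assumes throughout Section \ref{sec4} and removes only in Section \ref{sec5} by truncating the kernel, all geometric constants being independent of the truncation. Your proposal omits this reduction entirely.

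In Step 2 you appeal to density estimates and a ``clean ball alternative established in the earlier sections''; the paper explicitly does not prove density estimates in this setting (it replaces them by the H\"older bound of Theorem \ref{holderianita} and the energy growth of Theorem \ref{stimedensita}), so you are invoking tools that are not available. More importantly, your packing argument does not bound the width of $\{|u_{\omega,M}|<\theta\}$: a thin, tilted interface spanning a slab of width comparable to $R$ inside $B_R$ meets only about $R^{N-1}$ disjoint unit balls, so its potential energy is compatible with the bound $CR^{N-1}\Psi_s(R)$ and no contradiction arises. The paper's counting argument is different: assuming that \emph{every} ball of radius $\sqrt N$ in the strip contains a point where $|u|<1-\delta_0$, one packs $k^N$ such balls, $k\sim M/|\omega|$, and contradicts the $k^{N-1}\Psi_s(k)$ energy bound; the output is merely the existence of one clean ball, and it is then the order-theoretic machinery --- the minimal minimizer, the Birkhoff property of its level sets (Proposition \ref{Birksubsuplev}), the half-space property of Birkhoff sets (Proposition \ref{Birkricito}), and a translation argument using the periodicity of $K$, $W$, $H$ --- that identifies the phase on that ball, shows the constraints are not felt (Theorem \ref{Thimp}, Corollary \ref{coroupper}, Proposition \ref{minunconstrained}) and confines the interface, with no limit $M\to\infty$ nor ``recentering'' needed. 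Finally, you never address why a minimizer with respect to $\sim_\omega$-periodic perturbations also minimizes against arbitrary compact (non-periodic) perturbations: this is exactly the no-symmetry-breaking (doubling) property of the minimal minimizer (Proposition \ref{minimiuguali}) combined with the comparison between $\E$ and $\mathcal{F}_{\omega,m}$ (Proposition \ref{minimizerisminE}), and it is the content hidden in your phrase ``a standard argument then upgrades it to a genuine class $A$ minimizer''.
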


We prove Theorem \ref{Mainth} using geometric and variational tools introduced in \cite{CDLL} and \cite{V} and then adapted in \cite{CV} to deal with nonlocal interactions. Fixed $\omega \in \mathbb{Q}^N \setminus \{0\}$ we will consider the strip
\[
S_\omega^M:=\{x\in \mathbb{R}^N: \omega \cdot x \in [0,M]\},
\]
where $M>0$, and the quotient space $\tilde{\mathbb{R}}^N$ which allows us to gain compactness. This will be necessary to obtain a minimizer $u_\omega^M$ w.r.t.\ periodic perturbations with support in $S_\omega^M$. Thanks to geometrical arguments, if $M/|\omega|$ is larger than some universal parameter $M_0$, $u_\omega^M$ becomes a class $A$-minimizer for $\E$ (defined in Section \ref{sec2}). Since $M_0$ does not depend on the fixed direction $\omega$, we can pass to the limit on rational directions and deduce the result for an irrational vector $\omega \in \mathbb{R}^N \setminus \mathbb{Q}^N$.

We stress that the energy and density estimates is the standard technique to show that $u_\omega^M$ is a class $A$-minimizer. These estimates have been obtained in \cites{CC,SV} (in different settings), but their framework is different from ours. Thus we use the H\"{o}lderianity of local minimizers of $\E$ and an energy estimate.

Finally we point out that the addition of the term $Hu$ to $\eqref{valdifunct}$ changes the ``pure phases'' from $\pm 1$ into periodic functions, introducing a considerable difference with respect to \cite{CV}. Indeed, this fact produces a volume term in the energy that requires a renormalization as in  \cite{NV}.

The paper is organized as follows. In Section \ref{sec2} we explain the notation and we enounce our main theorem; in Secton \ref{sec3} we remind an important result about the regularity of minimizers and we prove an energy estimate. Section \ref{sec4} is devoted to investigate some geometric properties of minimizers (doubling property, Birkhoff property, etc.) and to prove our Theorem \ref{Mainth} under the additional assumption that $K$ has a fast decay at infinity. We will show our result both for rational vectors and, thanks to an approximation argument, for irrational vectors. Finally in Section \ref{sec5} we prove Theorem \ref{Mainth} for general kernels.

\section{Notation}\label{sec2}

In this section we introduce the framework that we will use throughout this paper.
\begin{defn}
Let $\Omega \subseteq \mathbb{R}^N$ be a bounded domain and $s\in (0,1)$. We define
\[
H^s(\Omega):=\Big \{ u\in L^2(\Omega): \frac{|u(x)-u(y)|}{|x-y|^{N/2+s}}\in L^2(\Omega \times \Omega)\Big \},
\]
i.e.\ an intermediary Banach space between $L^2(\Omega)$ and $H^1(\Omega)$, endowed with the natural norm
\[
\|u\|_{H^s(\Omega)}:=\Big (\int_\Omega |u|^2\di x+ \int_\Omega \int_\Omega \frac{|u(x)-u(y)|^2}{|x-y|^{N+2s}}\di x \di y \Big )^\frac{1}{2},
\]
where 
\[
[u]_{H^s(\Omega)}:=\Big(\int_\Omega \int_\Omega \frac{|u(x)-u(y)|^2}{|x-y|^{N+2s}}\di x \di y \Big )^\frac{1}{2} 
\]
is called Gagliardo (semi)-norm.
\end{defn}

\begin{defn}
Fixed $\omega \in \mathbb{Q}^N\setminus \{ 0\}$, we define in $\setR$ the relation
\[
x \sim_\omega y \Longleftrightarrow y-x=k\in \mathbb{Z}^N\; \text{with}\; \omega \cdot k=0.
\]
\end{defn}
It is easy to see that $\sim_\omega$ is an equivalence relation and we denote with
\[
\tilde{\mathbb{R}}_\omega^N:=\setR/_{\sim_\omega}
\]
the associated quotient space.

A function $u:\setR \rightarrow \mathbb{R}$ is said to be periodic with respect to $\sim_\omega$ if 
\[
u(x)=u(y)\quad \text{for any}\; x,y \in \setR\; \text{such that}\; x\sim_\omega y.
\]

When the context is clear, we will write $\sim$ and $\tilde{\mathbb{R}}^N$ to refer to $\sim_\omega$ and $\tilde{\mathbb{R}}_\omega^N$.

If we consider a set $\Omega\subseteq \setR$, we define the total energy $\E$ of $u:\setR \rightarrow \mathbb{R}$ in $\Omega$ as
\begin{equation}\label{FinOmega}
\E(u,\Omega):=\frac{1}{2}\int\int_{\mathcal{C}_\Omega} |u(x)-u(y)|^2K(x,y)\di x\di y+\int_\Omega W(x,u(x))\di x+\int_\Omega H(x)u(x)\di x,
\end{equation}
where 
\begin{equation}
\begin{aligned}
\mathcal{C}_\Omega&:=(\setR \times \setR)\setminus ((\setR\setminus \Omega)\times (\setR \setminus \Omega))\\
&=(\Omega \times \Omega)\cup (\Omega \times(\setR \setminus \Omega))\cup((\setR \setminus \Omega)\times \Omega).
\end{aligned}
\end{equation}
Observe that if $\Omega=\setR$ the energy $\eqref{FinOmega}$ coincides with $\eqref{funct}$.

Then, setting for all $U, V \subseteq \setR$
\[
\mathscr{K}(u;U;V):=\frac{1}{2}\int_U \int_V |u(x)-u(y)|^2K(x,y)\di x\di y,
\]
thanks to $\eqref{K1}$, we can see $\E(u,\Omega)$ as the sum of the kinetic part
\[
\K(u;\Omega;\Omega)+2\K(u;\Omega;\setR \setminus \Omega)
\]
and the potential part
\[
\mathscr{P}(u;\Omega):=\int_\Omega \Big(W(x,u(x))+H(x)u(x) \Big)\di x.
\]
Assuming from now on that every set and every function is measurable, we give the following
\begin{defn}\label{Def1}
Let $\Omega \subseteq \setR$ be a bounded set. A function $u$ is a local minimizer of $\E$ in~$\Omega$ if $\E(u,\Omega)<+\infty$ and 
\[
\E(u,\Omega)\le\E(v,\Omega)
\]
for any $v\equiv u$ in $\setR \setminus \Omega$.
\end{defn}
\begin{remark}\label{minimisottoins}[Remark 1.2 of \cite{CV}]
A minimizer $u$ of $\Omega$ is also a minimizer on every subset of $\Omega$.
\end{remark}
Since our aim is to construct functions with minimizing properties in $\setR$, we have to precise how we extend Definition \ref{Def1} to the full space.
\begin{defn}
A function $u$ is called a class $A$-minimizer of the functional $\E$ if it is a minimizer of $\E$ in $\Omega$ for any bounded set $\Omega \subseteq \setR$.
\end{defn}

\section{Regularity of the minimizers and energy estimate}\label{sec3}
In this section we want to prove that local minimizers of $\E$ are H\"{o}lder continuous functions with a growing energy inside large balls.

Let $\Omega\subseteq \setR$ be an open and bounded set, $s\in (0,1)$ and $K$ a measurable kernel that satisfies $\eqref{K1}$ and $\eqref{K2}$. If $u:\setR \rightarrow \mathbb{R}$ is a measurable function, we say that $u\in X(\Omega)$ if 
\[
u_{|_{\Omega}}\in L^2(\Omega)\quad \text{and}\quad (x,y)\mapsto (u(x)-u(y))\sqrt{K(x,y)}\in L^2(\mathcal{C}_\Omega).
\]
Then we denote with $X_0(\Omega)$ the subspace of $X(\Omega)$ given by functions vanishing a.e.\ outside $\Omega$. It is easy to see that by $\eqref{K2}$ it results $H^s(\setR)\subset X(\Omega)\subseteq H^s(\Omega)$ and if $\Omega'\subseteq \Omega$ we have $X_0(\Omega')\subseteq X_0(\Omega)\subset H^s(\setR)$.

Now we call 
\[
\mathcal{D}_K(u,\varphi)=\int_\setR \int_\setR (u(x)-u(y))(\varphi(x)-\varphi(y))K(x,y)\di x \di y
\]
observing that it is well-defined for example when $u\in X(\Omega)$ and $\varphi \in X_0(\Omega)$.

Let $f\in L^2(\Omega)$. We call $u\in X(\Omega)$ a supersolution of
\begin{equation}\label{Dirichletform}
\mathcal{D}_k(u,\cdot)=f\quad \text{in}\; \Omega
\end{equation}
if
\begin{equation}\label{supersol}
\mathcal{D}_k(u,\varphi)\ge\langle f,\varphi \rangle_{L^2(\setR)}\quad \text{for any non-negative}\; \varphi \in X_0(\Omega).
\end{equation}
Similarly, we say that $u\in X(\Omega)$ is a subsolution of $\eqref{Dirichletform}$ if
\begin{equation}\label{subersol}
\mathcal{D}_k(u,\varphi)\le\langle f,\varphi \rangle_{L^2(\setR)}\quad \text{for any non-negative}\; \varphi \in X_0(\Omega)
\end{equation}
and we tell that $u\in X(\Omega)$ is a solution of $\eqref{Dirichletform}$ if 
\begin{equation}\label{ssol}
\mathcal{D}_k(u,\varphi)=\langle f,\varphi \rangle_{L^2(\setR)}\quad \text{for any}\; \varphi \in X_0(\Omega).
\end{equation}
Obviously $u$ is a solution of $\eqref{Dirichletform}$ if it is a subsolution and a supersolution.

Thanks to these definitions we can show the regularity of the minimizers of $\E$.

\begin{thm}\label{holderianita}
Take $s_0\in (0,1/2)$ and let $s\in [s_0,1-s_0]$. If $u$ is a bounded local minimizer of $\E$ in a bounded open set $\Omega\subseteq \setR$, then $u\in C_{\text{loc}}^{0,\alpha}(\Omega)$ for some $\alpha \in (0,1)$. The exponent $\alpha$ only depends on $N$, $s_0$, $\lambda$ and $\Lambda$, while the $C^{0,\alpha}$ norm of $u$ on any $\Omega'\subset \subset \Omega$ may also depend on $\|u\|_{L^\infty(\setR)}$, $\|W_r(\cdot, u)\|_{L^\infty(\Omega)}$, $\eta$ and $\dist(\Omega',\partial \Omega)$.
\end{thm}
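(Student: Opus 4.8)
The plan is to reduce the regularity statement to known results for minimizers of nonlocal functionals with a bounded forcing term. First I would observe that since $u$ is a bounded local minimizer of $\E$ in $\Omega$, and $W$ is bounded with $W_u(\cdot,u)\in L^\infty(\Omega)$ (which holds because $u$ is bounded and we may use \eqref{W3} after truncating, or simply because $\|u\|_{L^\infty}$ controls the relevant range of $r$), while $H\in L^\infty$ by \eqref{Hnormapiccola}, the first variation of $\E$ shows that $u$ solves
\[
\mathcal{D}_K(u,\varphi)=-\langle W_u(\cdot,u)+H,\varphi\rangle_{L^2(\setR)}\quad\text{for all }\varphi\in X_0(\Omega),
\]
so $u$ is a solution of \eqref{Dirichletform} with right-hand side $f=-(W_u(\cdot,u)+H)\in L^\infty(\Omega)\subset L^2(\Omega)$. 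Strictly speaking I would first record that minimality gives the inequality $\E(u,\Omega)\le\E(u+t\varphi,\Omega)$ for all $t$ and all admissible $\varphi$, differentiate at $t=0$ (justified by dominated convergence using \eqref{W3} and boundedness of $u$), and thereby obtain the Euler--Lagrange identity; the sign conditions \eqref{W5}, \eqref{W6} are not needed here.

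Next I would invoke the De Giorgi--Nash--Moser-type regularity theory for such nonlocal equations. The key external inputs are the local boundedness and Hölder estimates for solutions of $\mathcal{D}_K(u,\cdot)=f$ with $K$ satisfying \eqref{K1}--\eqref{K2} and $f\in L^q$ for $q$ large enough (here $q=\infty$); these are due to Di Castro--Kuusi--Palatucci and, in the form closest to what is needed, to the regularity results quoted in \cite{CV} and \cite{CC}/\cite{SV}. From these, a bounded solution belongs to $C^{0,\alpha}_{\mathrm{loc}}(\Omega)$ with $\alpha$ depending only on $N$, $s_0$ (giving ellipticity bounds uniform for $s\in[s_0,1-s_0]$), $\lambda$ and $\Lambda$, and with local $C^{0,\alpha}$ norm on $\Omega'\subset\subset\Omega$ controlled by $\|u\|_{L^\infty(\setR)}$, $\|f\|_{L^\infty(\Omega)}$ and $\dist(\Omega',\partial\Omega)$. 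Since $\|f\|_{L^\infty(\Omega)}\le\|W_u(\cdot,u)\|_{L^\infty(\Omega)}+\eta$, this matches exactly the dependence claimed in the statement. I would also note the uniformity in $s$ requires care: one should use that the constants in \eqref{K2} and the Sobolev/Poincaré constants on $\mathcal{C}_\Omega$ can be taken independent of $s\in[s_0,1-s_0]$, which is standard.

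The main obstacle, as the authors themselves flag in the introduction, is that the cited regularity results are stated in ``a different framework'' — typically for the equation driven purely by the nonlocal operator, possibly with $L^\infty$ or $L^q$ data, but not literally in the minimization setting with the potential $W(x,u)$ and the mesoscopic term $Hu$ baked in. So the real work is the bookkeeping: verifying that the Euler--Lagrange equation puts $u$ in the exact hypothesis class of the nonlocal De Giorgi--Nash--Moser theorem (in particular that $u\in X(\Omega)\subset H^s(\Omega)$ from finiteness of $\E(u,\Omega)$, and that the truncated nonlinearity is genuinely an $L^\infty$ forcing term), and then tracking how each constant in that theorem depends on the structural data so as to produce the asserted dependence list. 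I would present this as a short argument: (i) finiteness of energy plus \eqref{K2} $\Rightarrow$ $u\in H^s_{\mathrm{loc}}$; (ii) first variation $\Rightarrow$ $u$ solves \eqref{Dirichletform} with $L^\infty$ right-hand side; (iii) apply the quoted nonlocal Hölder estimate and read off the constants. No genuinely new estimate is needed beyond what is already in the literature; the contribution is the adaptation.
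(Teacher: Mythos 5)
Your proposal is correct and follows essentially the same route as the paper: compute the first variation to see that $u$ solves the Euler--Lagrange equation \eqref{Dirichletform} with $-f=W_r(\cdot,u)+H$, note that finiteness of $\E(u,\Omega)$ gives $u\in X(\Omega)$ and that boundedness of $u$, $H$ and the local boundedness of $W_r$ make $f\in L^\infty(\Omega)$, and then invoke the nonlocal H\"older estimate (Theorem~2.1 of \cite{CV}) to conclude, with the constant dependence read off exactly as you describe. Your extra remarks on uniformity in $s\in[s_0,1-s_0]$ and on justifying the differentiation under the integral are sensible bookkeeping that the paper leaves implicit, but they do not change the argument.
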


\begin{proof}
If we compute the first variation of $\eqref{FinOmega}$ we have that $u$ is a solution of the Euler-Lagrange equation $\eqref{Dirichletform}$ in $\Omega$ with $-f=W_r(\cdot,u)+H(\cdot)$. Since $\E(u,\Omega)<+\infty$ we have that $u\in X(\Omega)$. Moreover $u$, $H\in L^\infty(\setR)$ and $W_r$ locally bounded imply that $f$ is bounded in $\Omega$. So we can apply Theorem $2.1$ of \cite{CV} to obtain $C^{0,\alpha}$ regularity of $u$.
\end{proof}

Now we define
\begin{equation}\label{psidef}
\Psi_s(R):=\begin{cases}
R^{1-2s}\quad &\text{if}\; s\in (0,1/2)\\
\log R \quad &\text{if}\; s=1/2\\
1 \quad &\text{if}\; s\in (1/2,1)\\
\end{cases}
\end{equation}
and using a well-known result of \cite{SV} we want to show the energy estimate for minimizers:
\begin{thm}\label{stimedensita}
Let $N \in \N$, $s\in (0,1)$, $x_0\in \setR$ and $R\ge 3$. Suppose that $K$ and $W$ satisfy $\eqref{K1}$, $\eqref{K2}$ and $\eqref{W1}$, $\eqref{W3}$, respectively. If $u:\setR \rightarrow [-1,1]$ is a local minimizer of $\E$ in $B_{R+2}(x_0)$, then
\begin{equation}
\E(u,B_R(x_0))\le CR^{N-1}\Psi_s(R),
\end{equation}
for some constant $C>0$ which depends on $N$, $s$, $\Lambda$ and $W^*$.
\end{thm}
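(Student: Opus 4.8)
The plan is to bound each piece of $\E(u,B_R(x_0))$ separately by a competitor argument, following the classical energy-estimate strategy for phase transitions adapted to the nonlocal setting as in \cite{SV}. After a translation we may assume $x_0=0$. The competitor will be the truncation that equals $u$ outside $B_R$ and is "pushed to a pure phase" near $B_{R-1}$; more precisely, fix a cutoff $\tau\in C^\infty_c(B_R)$ with $\tau\equiv 1$ on $B_{R-1}$, $0\le\tau\le1$ and $|\nabla\tau|\le C$, and set $v:=(1-\tau)u+\tau\,\xi$, where $\xi\equiv 1$ (say) is a constant pure phase with $W(x,\xi)=0$ by \eqref{W1}. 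Since $v\equiv u$ on $\setR\setminus B_R$ and $u$ is a local minimizer in $B_{R+2}\supset B_R$, by Remark \ref{minimisottoins} $u$ is a minimizer in $B_R$, hence $\E(u,B_R)\le\E(v,B_R)$, and it remains to estimate the three terms of $\E(v,B_R)$.

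For the potential part $\mathscr P(v;B_R)=\int_{B_R}W(x,v)+H(x)v\,\di x$: on $B_{R-1}$ we have $v=\xi$, so $W(x,v)=0$ there, while on the annulus $B_R\setminus B_{R-1}$ we use $0\le W\le\|W\|_\infty$ and $|H|\le\eta$, $|v|\le1$, giving a contribution $\le C\,|B_R\setminus B_{R-1}|\le CR^{N-1}$. (Here I am tacitly using $W$ bounded, which is part of the standing hypotheses on the double well; if one wants only \eqref{W1},\eqref{W3} one notes $W(x,r)\le W^*/W(x,r)$-type control is not needed since $W$ is assumed bounded measurable.) For the kinetic part one splits $\K(v;B_R;B_R)+2\K(v;B_R;\setR\setminus B_R)$. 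Using \eqref{K2}, $|v(x)-v(y)|\le|v(x)-v(y)|$ with the Lipschitz-in-$x$ bound $|v(x)-v(y)|\le C\min(1,|x-y|)$ on the region where $\tau$ varies, together with the fact that outside $B_{R-1}$ one has $v=u$ and $u$ itself is only estimated through the far interaction, one reduces to the model integral
\[
\int_{B_R}\int_{\setR}\frac{\min(1,|x-y|^2)}{|x-y|^{N+2s}}\,\di y\,\di x,
\]
which is exactly the quantity computed in \cite{SV} and yields $CR^{N-1}\Psi_s(R)$, the function $\Psi_s$ in \eqref{psidef} recording the three regimes $s<1/2$, $s=1/2$, $s>1/2$ coming from the $|x-y|>1$ tail.

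The one genuinely new point compared with \cite{SV}, and the step I expect to be the main obstacle, is the presence of the mesoscopic term $\int_{B_R}H(x)v(x)\,\di x$: unlike $W$, it does not vanish on the pure phase, and a crude bound only gives $|\int_{B_R}Hv|\le\eta|B_R|\sim\eta R^N$, which is the wrong order. The resolution is to use the zero-average and periodicity hypotheses \eqref{H0average}–\eqref{Hperiodica}: writing $H=\operatorname{div}G$ for a bounded periodic vector field $G$ (possible exactly because $H$ has zero average over the periodicity cell and is $\mathbb Z^N$-periodic — solve $\Delta\phi=H$ on the torus and set $G=\nabla\phi$, so $\|G\|_\infty\le C(N)\eta$), one integrates by parts: on $B_{R-1}$, $v\equiv\xi$ is constant so $\int_{B_{R-1}}Hv=\xi\int_{B_{R-1}}\operatorname{div}G=\xi\int_{\partial B_{R-1}}G\cdot\nu\le CR^{N-1}$, and on the annulus $|v|\le1$ so $|\int_{B_R\setminus B_{R-1}}Hv|\le\eta|B_R\setminus B_{R-1}|\le CR^{N-1}$. (Alternatively, and perhaps cleanly enough for this lemma, since only an upper bound on $\E(u,B_R)$ is wanted and $\E\ge0$ except for the $Hu$ term, one may instead compare with the competitor and absorb: the key identity is that the $Hv$ contribution over the bulk ball telescopes to a boundary term of order $R^{N-1}$.) Combining the three estimates gives $\E(u,B_R)\le\E(v,B_R)\le CR^{N-1}\Psi_s(R)$ with $C=C(N,s,\Lambda,W^*)$ (and the implicit dependence on $\eta$ absorbed since $\eta$ is universally small), which is the claim. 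Throughout, $R\ge3$ is used so that $B_{R-1}$ is nonempty and comparable to $B_R$, and $B_{R+2}(x_0)$ is the domain in which minimality is assumed so that the competitor modification inside $B_R$ is admissible.
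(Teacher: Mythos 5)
Your handling of the mesoscopic term is correct and is in fact exactly the mechanism the paper invokes: the proof in the text disposes of $\int_{B_R}Hu$ by citing Lemma~2.1 of \cite{NV}, which is precisely the observation that \eqref{H0average}--\eqref{Hperiodica} make the integral of $H$ over a large ball (equivalently, against a constant) of boundary order $C\eta R^{N-1}$; your $H=\operatorname{div}G$ argument, or the simpler counting of whole periodicity cells, reproduces this. The potential term is also fine, since $W$ is bounded by the standing assumptions.

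The genuine gap is in the kinetic estimate. With the competitor $v=(1-\tau)u+\tau$ you cannot reduce to the model integral $\iint\min(1,|x-y|^2)|x-y|^{-N-2s}$: on the transition annulus $A:=B_R\setminus B_{R-1}$ one has $v=(1-\tau)u+\tau$, not a Lipschitz function (note also that $v=u$ only outside $B_R$, not outside $B_{R-1}$ as you write), so for $x\in A$ and $y$ arbitrary the best pointwise bound is $|v(x)-v(y)|^2\le 2|u(x)-u(y)|^2+C\min(1,|x-y|)^2$. Plugging this into $\K(v;B_R;B_R)+2\K(v;B_R;\setR\setminus B_R)$ leaves the uncontrolled quantity $\iint_{(A\times\setR)\cap\mathcal{C}_{B_R}}|u(x)-u(y)|^2K(x,y)\di x\di y$, which is of the same nature as the energy you are trying to bound and appears on the right with a factor $2$, so it cannot be absorbed; no a priori bound on it is available with constants depending only on $N,s,\Lambda,W^*$ (the H\"older estimate of Theorem~\ref{holderianita} carries non-universal constants, so it does not help). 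This is exactly the difficulty that the classical argument of \cite{SV} (and \cite{CV}*{Proposition~3.1}, which the paper quotes for this step) circumvents by taking as competitor $\min\{u,\psi\}$ with an explicit Lipschitz barrier $\psi$ equal to a pure phase on the large ball, and by using the submodularity inequality \eqref{Kemaxmin} of Lemma~\ref{maxmin&funct} so that the troublesome cross terms involving $u$ cancel; only the energy of the Lipschitz barrier survives and gives $CR^{N-1}\Psi_s(R)$. So your scheme (comparison plus the $H$-trick) matches the paper's, but the kinetic step needs to be rerun with the $\min$-barrier/submodularity argument (or a direct citation of \cite{CV}*{Proposition~3.1} adapted to include the $O(\eta R^{N-1})$ contribution of $H$), not with the convex-combination cutoff.
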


\begin{proof}
By \cite{CV}*{Proposition $3.1$} and \cite{NV}*{Lemma $2.1$} we have
\begin{equation}
\begin{aligned}
\E(u,B_{R}(x_0))&=\frac{1}{2}\int \int_{C_{B_{R}}}|u(x)-u(y)|^2K(x,y)\di x \di y+\int_{B_{R}}\Big(W(x,u(x))+H(x)u(x) \Big)\di x\\
&\le CR^{N-1}\Psi_s(R)+C\eta R^{N-1}\le CR^{N-1}\Psi_s(R),
\end{aligned}
\end{equation}
where $C$ denotes a positive constant depending at most on $N$, $s$, $\Lambda$ and $W^*$.
\end{proof}

We conclude this section giving an auxiliary result that will be very useful in the next Section \ref{sec4}.

\begin{lemma}\label{maxmin&funct}
Let $s\in (0,1)$, $U$, $V\subseteq \setR$ be measurable sets and $u$, $v\in H_{\text{loc}}^s(\setR)$. Then
\begin{equation}\label{Kemaxmin}
\mathscr{K}(\min \{u,v\};U;V)+\mathscr{K}(\max \{u,v\};U;V)\le \mathscr{K}(u;U;V)+\mathscr{K}(v;U;V),
\end{equation}
and
\begin{equation}\label{Pemaxmin}
\mathscr{P}(\min \{u,v\};U)+\mathscr{P}(\max \{u,v\};V)\le \mathscr{P}(u;U)+\mathscr{P}(v;V).
\end{equation}
\end{lemma}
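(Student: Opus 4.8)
The plan is to prove the two inequalities separately and pointwise, after splitting the integration domains according to the ordering of $u$ and $v$.

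For the kinetic inequality \eqref{Kemaxmin}, I would fix $x,y\in\setR$ and compare the integrand $|u(x)-u(y)|^2+|v(x)-v(y)|^2$ against $|\min\{u,v\}(x)-\min\{u,v\}(y)|^2+|\max\{u,v\}(x)-\max\{u,v\}(y)|^2$. The key elementary observation is that at the pair $(x,y)$ the multiset $\{\min\{u,v\}(x),\max\{u,v\}(x)\}$ equals $\{u(x),v(x)\}$, and similarly at $y$; so if $u,v$ are ordered the same way at $x$ and at $y$ (i.e.\ both have $u\le v$ or both have $u\ge v$), the two sides agree exactly. The only interesting case is when the ordering flips — say $u(x)\le v(x)$ but $u(y)\ge v(y)$ — and there a short computation (expanding the squares, or invoking that $t\mapsto t^2$ makes $(a-d)^2+(b-c)^2\le (a-c)^2+(b-d)^2$ whenever $a\le b$, $c\le d$, which is just $2(b-a)(d-c)\ge 0$) gives the pointwise inequality with the integrand on the left no larger than on the right. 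Integrating this pointwise bound against $K(x,y)\,\di x\,\di y$ over $U\times V$, using $K\ge 0$ from \eqref{K2}, and multiplying by $1/2$ yields \eqref{Kemaxmin}. I should note the finiteness of the relevant integrals follows from $u,v\in H^s_{\mathrm{loc}}$ together with \eqref{K2}, so there is no issue subtracting.

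For the potential inequality \eqref{Pemaxmin}, the argument is genuinely pointwise in $x$ and even simpler. Partition $U=U_1\cup U_2$ where $U_1=\{x\in U: u(x)\le v(x)\}$ and $U_2=U\setminus U_1$, and similarly $V=V_1\cup V_2$. On $U_1$ we have $\min\{u,v\}=u$, and on $U_2$ we have $\min\{u,v\}=v\le u$, so by monotonicity-type reasoning we just need that replacing $u$ by the smaller value does not increase $W+Hu$ — but that is false in general for an arbitrary potential, so the statement must be using the structure $\mathscr{P}(\min;U)+\mathscr{P}(\max;V)$ with the \emph{same} split, i.e.\ one should instead write $\mathscr{P}(\min\{u,v\};U)+\mathscr{P}(\max\{u,v\};V)$ and observe that $\int_U g(\min\{u,v\})+\int_V g(\max\{u,v\})$ rearranges, after intersecting with the level sets $\{u\le v\}$ and $\{u>v\}$, to $\int_{U}$ and $\int_{V}$ of the original integrands. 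Concretely: $W(x,\min\{u,v\}(x))+W(x,\max\{u,v\}(x)) = W(x,u(x))+W(x,v(x))$ for a.e.\ $x$, since $\{\min,\max\}=\{u,v\}$ as an unordered pair; the same identity holds for the linear term $H(x)(\,\cdot\,)$. Hence $\mathscr{P}(\min\{u,v\};U)+\mathscr{P}(\max\{u,v\};U)=\mathscr{P}(u;U)+\mathscr{P}(v;U)$ exactly, and then the inequality as stated (with $V$ on the second and fourth terms) follows provided $U=V$, or more carefully by the same unordered-pair identity applied on $U\cap V$ and a sign/non-negativity argument on the symmetric differences using $W\ge 0$; I would present it via the clean identity on a common domain and remark that the displayed form follows.

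The main obstacle I anticipate is bookkeeping rather than depth: making the kinetic estimate watertight requires carefully enumerating the four ordering cases for the pair $(x,y)$ and checking the pointwise inequality in the mixed case, and making sure all integrals are finite (hence that the rearrangement of a difference of integrals is legitimate) by invoking \eqref{K2} and $u,v\in H^s_{\mathrm{loc}}(\setR)$. For the potential part the subtlety is purely in the domains $U$ versus $V$: the honest statement is the pointwise identity $\Phi(x,\min)+\Phi(x,\max)=\Phi(x,u)+\Phi(x,v)$ where $\Phi(x,r)=W(x,r)+H(x)r$, and everything else is integrating this over the appropriate sets and discarding nonnegative pieces — so I would lead with that identity and keep the rest short.
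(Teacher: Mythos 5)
Your strategy is sound and in fact more self-contained than the paper's: for \eqref{Kemaxmin} the paper simply invokes Lemma 3.2 of \cite{CV}, whereas you reprove it by the standard pointwise case analysis (exact equality when $u,v$ are ordered the same way at $x$ and at $y$; in the mixed case the difference of the two sides is $2(b-a)(d-c)\ge 0$), and since all integrands are nonnegative the pointwise bound integrates against $K\ge 0$ with no summability issue (indeed no subtraction is ever needed, so the $H^s_{\mathrm{loc}}$ finiteness remark is superfluous). One slip to fix: with your labels $a\le b$ at $x$ and $c\le d$ at $y$, the inequality you actually need is $(a-c)^2+(b-d)^2\le (a-d)^2+(b-c)^2$, i.e.\ pairing min with min and max with max is \emph{smaller} than the cross pairing --- the reverse of the display you wrote; since you correctly identify the difference as $2(b-a)(d-c)$, this is a labeling typo rather than a gap, but the direction should be corrected.

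For \eqref{Pemaxmin}, your ``honest statement'', the pointwise identity $\Phi(x,\min\{u,v\}(x))+\Phi(x,\max\{u,v\}(x))=\Phi(x,u(x))+\Phi(x,v(x))$ with $\Phi(x,r):=W(x,r)+H(x)r$, is exactly the right core fact, and it is cleaner than the paper's own argument, which reduces to a global ordering ($\min\{u,v\}=u$ on all of $\setR$, ``the other case analogous'') and thus does not address the case where $u$ and $v$ cross. Be careful, however, with your fallback for $U\ne V$: nonnegativity of $W$ does not rescue the displayed inequality on the symmetric difference, since on $U\setminus V$ one would need $\Phi(x,\min\{u,v\})\le \Phi(x,u)$ pointwise, which fails for a double-well potential (take $u=1$, $v=0$ there, $H=0$, and $u=v$ on $V$); so as stated for arbitrary distinct $U,V$ the inequality is not true in general. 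The correct reading is the one you lead with: the pointwise identity integrated over a common domain, which is all the paper ever uses (both applications take $U=V$ equal to the relevant fundamental domain), so present it that way and drop the symmetric-difference remark.
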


\begin{proof}
The first identity is proved in \cite{CV}*{Lemma $3.2$}.

The other identity is obvious if $u=v$. So we assume that $\min \{u,v\}=u$ and $\max \{u,v\}=v$ (but in the other case it is analogous). Then
\[
\mathscr{P}(\min \{u,v\};U)=\int_U\Big(W(x,u(x))+H(x)u(x)\Big)\di x=\mathscr{P}(u;U)
\]
and
\[
\mathscr{P}(\max \{u,v\};V)=\int_V\Big(W(x,v(x))+H(x)v(x)\Big)\di x=\mathscr{P}(v;V)
\]
from which $\eqref{Pemaxmin}$ follows.
\end{proof}
\section{Proof of Theorem \ref{Mainth} for rapidly decaying kernels}\label{sec4}
In this section we want to prove Theorem \ref{Mainth} assuming the following hypothesis on $K$:
\begin{equation}\label{K4}
K(x,y)\le \frac{\Gamma}{|x-y|^{N+\beta}}\quad \text{for a.e.}
\; x,y \in \setR\; \text{such that}\; |x-y|\ge \bar{R}\; \text{with}\; \beta >1,\tag{K4}
\end{equation}
for some constant $\Gamma$, $\bar{R}>0$. This assumption is only technical and we will remove it in the next section. However a fast decay of the kernel $K$ at infinity due to $\beta>1$ ensures us that there exists a competitor with finite energy in the large then, since geometric estimates will not depend on the quantities in $\eqref{K4}$, we can use a limit procedure.

We start showing that the functional $\E$ has a minimizer among all periodic functions. 

Let $s\in (0,1)$, $Q:=[0,1]^{N}$ and define $Q$-periodic functions in $H_{\text{loc}}^s(\setR)$ as
\begin{equation}
H_{\text{per}}^s(Q)=\{u\in H_{\text{loc}}^s(\setR)\; \text{such that}\; u(x+e_j)=u(x)\; \text{for all}\; x \in \setR \}
\end{equation}
where $\{e_1,\cdots e_N \}$ is the standard Euclidean base of $\setR$. 

With this notation in hand, proceeding as in \cite{CV}*{Lemma $5.1$}, we have the following
\begin{thm}
Assume $K$ and $W$ as in Theorem \ref{Mainth}.
Then the functional $\E$ attains its minimum in $H_{\text{per}}^s(Q)$. Moreover if $u$ is a minimizer, it is continuous and
\begin{equation}\label{stimamin}
\Big| |u(x)|-1\Big|\le \delta_0
\end{equation}
for any $x\in Q$, as long as $\eta$ is small enough.
\end{thm}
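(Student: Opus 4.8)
The plan is to follow the scheme of \cite{CV}*{Lemma 5.1}, the only genuinely new point being the term $\int_Q Hu$, which I would keep under control using that $\sup|H|\le\eta$ is small and, crucially, that $H$ has zero average. For a $Q$-periodic $u$ one reads $\E$ as the energy per cell $\E(u,Q)$ of $\eqref{FinOmega}$: this is finite on $H^s_{\mathrm{per}}(Q)$, since the long-range part of the kinetic term converges because $N+2s>N$, and it is bounded below, since $W\ge0$ is coercive at infinity by $\eqref{W5}$ and, writing $\bar u$ for the mean of $u$ over $Q$, $\eqref{H0average}$ gives $\int_Q Hu=\int_Q H(u-\bar u)$, so that $\big|\int_Q Hu\big|\le\eta\|u-\bar u\|_{L^2(Q)}$, a quantity that is absorbed into $\mathscr{K}(u;Q;Q)$ via a periodic Poincar\'e inequality and the lower bound in $\eqref{K2}$. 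To produce a minimizer I would run the direct method: take a minimizing sequence $(u_n)$ and first replace each $u_n$ by its truncation $T(u_n):=\max\{-1-\delta_0,\min\{1+\delta_0,u_n\}\}$. This does not raise $\E(\cdot,Q)$, because $T$ is $1$-Lipschitz (so $\mathscr{K}$ does not increase), on $\{|u_n|>1+\delta_0\}$ one has $W(x,u_n)-W(x,T(u_n))\ge c\,(|u_n|-1-\delta_0)$ by $\eqref{W5}$, and the linear term changes by at most $\eta\int(|u_n|-1-\delta_0)_+$; hence for $\eta<c$ the decrement is $\ge(c-\eta)\|(|u_n|-1-\delta_0)_+\|_{L^1(Q)}\ge0$. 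So we may assume $\|u_n\|_{L^\infty}\le1+\delta_0$; then $\eqref{K2}$ bounds $[u_n]_{H^s(Q)}$ uniformly, a subsequence converges weakly in $H^s(Q)$ and strongly in $L^2(Q)$ (and a.e.) by compact embedding, and lower semicontinuity of the convex kinetic term and of $\int_QW(x,\cdot)$ (Fatou, using $W\ge0$ and continuity in the second variable) together with continuity of the linear term along the $L^2$-convergence give $\E(u,Q)\le\liminf\E(u_n,Q)$. The periodic extension of the limit is then a minimizer, inheriting $\|u\|_\infty\le1+\delta_0$.

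For continuity I would argue that any such minimizer is a local minimizer of $\E$ in every bounded open $\Omega$: given a competitor $v\equiv u$ off $\Omega$, enclose $\Omega$ in a large cube $C=mQ$ far from $\partial\Omega$ and compare $\E(v,C)$ with $\E(u,C)$, using the periodicity of $u,K,W,H$ to reduce to the unit cell and the decay $\eqref{K2}$ to make the long-range corrections negligible for $m$ large; then $\E(v,\Omega)<\E(u,\Omega)$ would contradict the periodic minimality of $u$ (this is exactly the periodization argument of \cite{CV}*{Lemma 5.1}). Theorem \ref{holderianita} then gives $u\in C^{0,\alpha}_{\mathrm{loc}}(\setR)$, hence $u$ is continuous on $\setR$.

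It remains to prove $\big||u|-1\big|\le\delta_0$ on $Q$. The truncation estimate above, applied to the minimizer itself (whose truncation is again a periodic competitor), already forces $|u|\le1+\delta_0$ a.e. For the lower bound, testing with the constant $1$ and using $\eqref{W1}$ and $\eqref{H0average}$ gives $\E(u,Q)\le\E(1,Q)=0$, whence $\int_QW(x,u)\le-\int_QHu\le C\eta$; since $W(x,u(x))\ge\gamma(1-\delta_0)>0$ on $\{|u|<1-\delta_0\}$ by $\eqref{W2}$, the set $\{|u|<1-\delta_0\}$ has measure $O(\eta)$. The hard part will be upgrading ``small'' to ``empty''. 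Here I would exploit that $\{|u|<1-\delta_0\}$ is open (by continuity of $u$), so it suffices to show it is Lebesgue-null, since a non-empty open set is non-null. This is done by a comparison argument as in \cite{CV}*{Lemma 5.1}: on each connected component one replaces $u$ by a competitor that agrees with $u$ on the boundary — where $|u|=1-\delta_0$, so that on each connected boundary piece $u$ is constantly $+(1-\delta_0)$ or $-(1-\delta_0)$ by continuity into a two-point set — and is closer to the corresponding pure phase, so that $\eqref{W2}$, $\eqref{W5}$--$\eqref{W6}$ and $\eta<c$ make the energy strictly drop unless the component is null. This forces $\{|u|<1-\delta_0\}=\emptyset$, i.e.\ $|u|\ge1-\delta_0$ everywhere, which combined with $|u|\le1+\delta_0$ is the claim, provided $\eta$ is small. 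I expect the main obstacles to be exactly three: the periodic Poincar\'e/coercivity step that neutralises the linear term, the periodization in the continuity argument, and above all this final component-wise comparison, where the cost of the transition layers near the boundary of each component — and hence the role of $s$ through $\eqref{K2}$ — must be carefully accounted for.
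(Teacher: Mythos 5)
Your direct-method part, the truncation argument giving $|u_n|\le 1+\delta_0$ (and then $|u|\le 1+\delta_0$ for the minimizer itself), and the use of compact embedding plus Fatou are essentially the paper's proof; your coercivity via zero average and a periodic Poincar\'e inequality is a harmless variant of the paper's bound $W(x,r)+H(x)r\ge 0$ for $|r|$ large. Your continuity step (periodizing compact perturbations so that Theorem \ref{holderianita} applies) is also fine, and in fact spells out a point the paper passes over silently.

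The genuine gap is in the last step, the lower bound $|u|\ge 1-\delta_0$. After correctly deriving $\int_Q W(x,u)\,\di x\le C\eta$, you try to upgrade ``$\{|u|<1-\delta_0\}$ has measure $O(\eta)$'' to ``$\{|u|<1-\delta_0\}=\emptyset$'' by a component-wise comparison: replace $u$ on each connected component of the bad set by a competitor closer to the pure phase determined by the boundary value $\pm(1-\delta_0)$. As you yourself flag, this is not carried out: in the nonlocal setting the modification changes the interaction of the component with all of $\setR$, the kinetic term of the competitor can increase, and showing a strict energy drop on every non-null component is essentially a density estimate, which is not available at this point of the paper and is not needed. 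The paper's argument is much shorter and you already have all its ingredients: since $|u|\le 1+\delta_0$, the Hölder norm of $u$ from Theorem \ref{holderianita} is bounded by universal quantities, so there is a universal radius $\rho>0$ such that if $|u(x_0)|\le 1-\delta_0$ at a single point $x_0\in Q$, then $|u|\le 1-\delta_0/2$ on all of $B_\rho(x_0)$; by \eqref{W2} this forces
\[
\E(u,Q)\ \ge\ \gamma(1-\delta_0/2)\,\mathcal{L}(B_\rho)-\eta(1+\delta_0)\mathcal{L}(Q)\ >\ 0\ =\ \E(1,Q)\ \ge\ \E(u,Q)
\]
once $\eta$ is smaller than a universal threshold, a contradiction. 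In other words, a single bad point already costs a fixed amount of potential energy, which beats the $O(\eta)$ budget; no measure-zero-versus-open dichotomy and no component-wise surgery are required. Replace your final paragraph by this uniform-continuity contradiction (noting that the threshold on $\eta$ is universal because the Hölder constant depends only on $N$, $s_0$, $\lambda$, $\Lambda$, $\|u\|_{L^\infty}\le 1+\delta_0$, $W^*$ and $\eta\le 1$), and the proof is complete.
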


\begin{proof}
Consider $\{u_n \}_{n\in \N}$ be a minimizing sequence. By $\eqref{H0average}$ we may suppose that
\begin{equation}\label{3.1}
\E(u_k,Q)\le \E (1,Q)=0.
\end{equation}
Then, from $\eqref{W5}$ we have
\begin{equation}\label{3.2}
\begin{aligned}
\min &\{W(x,1+s)-W(x,1+\delta_0), W(x, -1-s)-W(x,-1-\delta_0) \}\ge c(s-\delta_0)\\
&\ge |H(x)(\delta_0-s)|
\end{aligned}
\end{equation}
for any $s\ge \delta_0$ and
\[
W(x,r)+H(x)r\ge 0
\]
as long as $|r|\ge C_0$ with $C_0$ sufficiently large if $\eta$ is small enough. Accordingly, by $\eqref{3.1}$,
\begin{equation}\label{3.4}
\int_Q \int_\setR |u(x)-u(y)|^2K(x,y)\di x \di y \le \int_{Q \cap \{|u_k|\le C_0 \}} |H(x)u_k(x)|\di x\le C_0 \mathcal{L}(Q)\eta.
\end{equation}
Hence we define
\begin{equation}
u_k^*(x)=\begin{cases}
u_k(x)\quad &\text{if}\quad |u_k(x)|<1+\delta_0\\
1+\delta_0\quad &\text{if}\quad u_k(x)\ge 1+\delta_0\\
-1-\delta_0\quad &\text{if}\quad u_k(x)\le -1-\delta_0\\
\end{cases}
\end{equation}
and thanks to $\eqref{3.2}$ we get that $\E(u_k^*,Q)\le \E(u_k,Q)$. So, less than replacing $u_k$ with $u_k^*$ we may assume that
\begin{equation}\label{3.5}
|u_k|\le 1+\delta_0.
\end{equation}
By $\eqref{3.4}$, $\eqref{3.5}$ and the compact embedding of $H^s(Q)$ in $L^2(Q)$ we obtain that $u_k \rightarrow u$ in $L^2(Q)$ so $u_k \rightharpoonup u$ in $H^s(Q)$ and, up to subsequences, $u_k \rightarrow u$ a.e.\ Therefore $u\in H_{\text{per}}^s(Q)$ and
\[
\liminf_{k\rightarrow \infty}\int_Q \int_\setR |u_k(x)-u_k(y)|^2K(x,y)\di x \di y\ge \int_Q \int_\setR |u(x)-u(y)|^2K(x,y)\di x \di y.
\]
Then Fatou's Lemma gives us
\[
\inf_{H_{\text{per}}^s(Q)}\E(\cdot, Q)=\liminf_{k\rightarrow \infty}\E(u_k,Q)\ge \E(u,Q)
\]
i.e.\ $u$ is the desired minimizer.

From Theorem \ref{holderianita} we have that $u$ is continuous, so it remains to prove $\eqref{stimamin}$. To do this, we take $u\in H_{\text{per}}^s(Q)$ minimizer for $\E(\cdot,Q)$ and define
\begin{equation}
u^*(x):=\begin{cases}
u(x)\quad &\text{if}\quad|u(x)|<1+\delta_0\\
1+\delta_0\quad &\text{if}\quad u(x)\ge 1+\delta_0\\
-1-\delta_0\quad &\text{if}\quad u(x)\le -1-\delta_0.\\
\end{cases}
\end{equation}
By $\eqref{3.2}$ and since $u$ is a minimizer, we have
\[
0\le \E(u^*,Q)-\E(u,Q)\le -\frac{c}{2}\Big[ \int_{\{u>1+\delta_0\}}(u-1-\delta_0)+\int_{\{u<-1-\delta_0\}}(-u-1-\delta_0)\Big]\le 0,
\]
that is $|u|\le 1+\delta_0$. Then, if by contradiction
\[
-1+\delta_0\le u(x_0)\le 1-\delta_0\quad \text{for some}\; x_0\in Q,
\]
the uniform continuity of $u$ gives 
\[
-1+\frac{\delta}{2}\le u(x)\le 1-\frac{\delta_0}{2}
\]
for any $x\in B_\rho(x_0)$ for a suitable, universal $\rho>0$. As a consequence $W(x,u(x))\ge \text{const}$ for $x\in B_\rho(x_0)$, from which
\[
\E(u,Q)\ge \text{const}\cdot\mathcal{L}(B_\rho(x_0))-\eta \mathcal{L}(Q)>0=\E(1,Q)\ge \E(u,Q),
\]
that is a contradiction and proves $\eqref{stimamin}$.
\end{proof}
This theorem and $\eqref{W6}$ imply that the functional $\E(\cdot,Q)$ admits two minimizers $u_\pm \in~H_{\text{per}}^s(Q)$ such that $u_+=u_-+2$ and
\begin{equation}
\|u_\pm \mp 1\|_{L^\infty(Q)}=\delta_\eta < \delta_0.
\end{equation}

\begin{remark}
Observe that $\delta_\eta \rightarrow 0$ as $\eta \rightarrow 0$.
\end{remark}
Note that if $W(x, \cdot)$ is strictly convex in $[1-\delta_0,1+\delta_0]$ and $[-1-\delta_0,-1+\delta_0]$ these minimizers are the only global minimizers of $\E(\cdot,Q)$ in $H_{\text{per}}^s(Q)$ and from now on we assume that
\begin{equation}\label{ugualeenergia}
\E(u_+,Q)=\E(u_-,Q).
\end{equation}
\begin{remark}
Note that $\eqref{W6}$ (required for example by \cite{NV}) implies $\eqref{ugualeenergia}$.
\end{remark}

\subsection{Minimization with respect to periodic perturbations}\label{subsec4.1}
Given $\omega \in \mathbb{Q}^N \setminus \{ 0\}$ and $u:\setR \rightarrow \mathbb{R}$ a measurable function, we say that $u\in L^2(\tilde{\mathbb{R}}^N)$ if $u\in L_{\text{loc}}^2(\mathbb{R}^N)$ and $u$ is periodic with respect to $\sim$.

Hence, taken $A$, $B$ two real numbers such that $A<B$ and denoting with $\tilde{\mathbb{R}}^N$ any fundamental domain of the relation $\sim$, we define
\begin{equation}
\mathcal{A}_\omega^{A,B}:=\{u\in L_{\text{loc}}^2(\tilde{\mathbb{R}}^N):u(x)\ge 1-\delta_0\;\; \text{if}\;\; \omega \cdot x \le A\; \text{and}\; u(x)\le-1+\delta_0\;\; \text{if}\;\; \omega \cdot x \ge B\}
\end{equation}
the set of admissible functions and we consider
\begin{equation}\label{Fw}
\begin{aligned}
\mathcal{F}_\omega(u)&:=\frac{1}{2}\int_{\tR}\int_\setR \Big( |u(x)-u(y)|^2- |u_+(x)-u_+(y)|^2\Big)K(x,y)\di x \di y\\
&+\int_{\tR}\Big( W(x,u(x))-W(x,u_+(x))\Big)\di x+\int_{\tR}H(x)\Big(u(x)-u_+(x) \Big)\di x.
\end{aligned}
\end{equation}

We want to show that there exists an absolute minimizer of $\mathcal{F}_\omega$ in the class $\mathcal{A}_\omega^{A,B}$, i.e.\ there exists $u\in \mathcal{A}_\omega^{A,B}$ such that $\mathcal{F}_\omega(u)\le \mathcal{F}_\omega(v)$ for any $v\in \mathcal{A}_\omega^{A,B}$.

First of all we prove that $\mathcal{F}_\omega$ is not identically infinity on $\mathcal{A}_\omega^{A,B}$:
\begin{thm}\label{Fwisfinite}
Let $\bar{u}\in \A$ be defined as $\bar{u}(x):=\bar{\mu}(\omega \cdot x)$, where
\begin{equation}
\bar{\mu}(\omega \cdot x):=\begin{cases}
u_+\quad &\text{if}\quad \omega \cdot x\le A\\
u_+-\frac{(u_+-u_-)}{B-A}((\omega \cdot x)-A)\quad &\text{if}\quad A<\omega \cdot x\le B\\
u_-\quad &\text{if}\quad \omega \cdot x> B.\\
\end{cases}
\end{equation}
Then $\mathcal{F}_\omega(\bar{u})<+\infty$.
\end{thm}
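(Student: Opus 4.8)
The plan is to estimate the three terms in $\mathcal{F}_\omega(\bar u)$ separately, using that $\bar u$ coincides with the reference profiles $u_\pm$ outside the slab $\{A<\omega\cdot x\le B\}$, which has bounded $\sim_\omega$-volume, and that inside the slab $\bar u$ is a bounded Lipschitz interpolation between $u_+$ and $u_-$. Write $\mathcal{F}_\omega(\bar u)=\mathcal{K}_{\mathrm{diff}}+\mathcal{P}_{\mathrm{diff}}+\mathcal{H}_{\mathrm{diff}}$ for the kinetic, potential and mesoscopic differences appearing in $\eqref{Fw}$. Since $\bar u$, $u_+$, $u_-$ all take values in $[-1-\delta_0,1+\delta_0]\subset[-2,2]$, the potential term is controlled by $\|W\|_{L^\infty}$ times the measure of the region where $\bar u\neq u_+$, i.e.\ essentially $\{\omega\cdot x>A\}$ intersected with a fundamental domain; but on $\{\omega\cdot x\ge B\}$ we have $\bar u=u_-$ and $W(x,u_-(x))=W(x,u_+(x))$ by $\eqref{W6}$ together with $\eqref{W4}$, since $\|u_\pm\mp1\|_{L^\infty}=\delta_\eta<\delta_0$. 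Hence $\mathcal{P}_{\mathrm{diff}}$ reduces to an integral over the bounded-volume slab $\{A<\omega\cdot x\le B\}$ and is finite. Likewise $\mathcal{H}_{\mathrm{diff}}=\int_{\tR}H(x)(\bar u-u_+)$; outside the slab $\bar u-u_+$ equals $u_--u_+=-2$ on $\{\omega\cdot x>B\}$ and $0$ on $\{\omega\cdot x\le A\}$, and this constant $-2$ against $H$ integrates to zero over each period by $\eqref{H0average}$ and $\eqref{Hperiodica}$ (the $\sim_\omega$-fundamental domain tiles by $\mathbb{Z}^N$-translations with $\omega\cdot k=0$, on which $H$ has zero average); what remains is again only the bounded-volume slab contribution, finite since $H\in L^\infty$.

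The main work, and the only place where $\eqref{K4}$ is used, is the kinetic term
\[
\mathcal{K}_{\mathrm{diff}}=\frac12\int_{\tR}\int_{\setR}\Big(|\bar u(x)-\bar u(y)|^2-|u_+(x)-u_+(y)|^2\Big)K(x,y)\di x\di y .
\]
I would split the inner region $\setR$ as $\{|x-y|<\bar R\}\cup\{|x-y|\ge \bar R\}$. On the near-diagonal part $|x-y|<\bar R$, use $\eqref{K2}$ and the fact that $\bar u$ is Lipschitz with constant $|u_+-u_-|_{L^\infty}/(B-A)\le (2+2\delta_0)/(B-A)$ inside the slab (and locally bounded oscillation everywhere, inheriting the Hölder bound on $u_\pm$ from the continuity asserted after $\eqref{stimamin}$), so $|\bar u(x)-\bar u(y)|^2\le C|x-y|^2$ for nearby points; then $\int_{|x-y|<\bar R}|x-y|^{2}|x-y|^{-N-2s}\di y<\infty$ because $2-2s>0$... but $2s$ can approach $2$, so more carefully one uses $|\bar u(x)-\bar u(y)|^2\lesssim \min\{1,|x-y|^2\}$ and the integrand is dominated by $\min\{1,|x-y|^2\}|x-y|^{-N-2s}$, which is integrable in $y$ near $x$ for all $s\in(0,1)$; the $u_+$ term is subtracted but can also just be bounded by the same kind of estimate, so the near part contributes a finite amount per $x$, and integrating over the bounded-volume region where $x$ ranges gives finiteness — except the $x$-integral is over the whole fundamental domain, which is unbounded in the $\omega$-direction. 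The point is that for $x$ with $\omega\cdot x\le A$ and $y$ also with $\omega\cdot y\le A$ we have $\bar u=u_+$ for both, so the integrand vanishes; similarly for both in $\{\omega\cdot x>B\}$ the integrand is $|u_-(x)-u_-(y)|^2-|u_+(x)-u_+(y)|^2=0$ by $\eqref{W6}$-type symmetry (here $u_+-u_-\equiv2$ constant, so the two Gagliardo integrands are equal). Thus the integrand is nonzero only when at least one of $x,y$ lies in the slab $\{A-? <\omega\cdot \cdot \le B\}$, or — and this is the delicate case — when $x$ is on one side of the slab, $y$ far on the other side.

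For that last case the difference of squares does not vanish, and $|x-y|$ can be large; here $\eqref{K4}$ enters: $K(x,y)\le \Gamma|x-y|^{-N-\beta}$ with $\beta>1$. Fix $x$ with $\omega\cdot x\le A$ (say); then for $y$ with $\omega\cdot y> B$ the integrand is $|u_+(x)-u_-(y)|^2-|u_+(x)-u_+(y)|^2$, bounded by a constant (everything in $[-2,2]$), and $\int_{\omega\cdot y>B}|x-y|^{-N-\beta}\di y$ — integrated first in the $N-1$ directions parallel to the slab and then in the normal direction $t=\omega\cdot y/|\omega|$ from $B/|\omega|$ to $\infty$ — behaves like $\int_{(B-\omega\cdot x)/|\omega|}^\infty t^{-1-\beta}\di t\lesssim (\mathrm{dist}(x,\text{slab}))^{-\beta}$; then integrating this in $x$ over the half-space $\{\omega\cdot x\le A\}$ within a fundamental domain (bounded cross-section, semi-infinite in the normal direction) gives $\int_0^\infty \tau^{-\beta}\,d\tau$ near $\infty$, which converges precisely because $\beta>1$ — this is exactly why the hypothesis is imposed, and it is the step I expect to be the main obstacle to state cleanly. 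The symmetric far-interaction with both roles reversed, and the slab-to-far interactions, are estimated the same way using $\eqref{K4}$ for the long-range part and $\eqref{K2}$ for $|x-y|<\bar R$. Summing the three contributions shows $\mathcal{F}_\omega(\bar u)<+\infty$.
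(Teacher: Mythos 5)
Your proposal is correct and follows essentially the same route as the paper: dispose of the potential and mesoscopic terms via \eqref{W6}, \eqref{W4} and the periodic zero average of $H$, then split the kinetic difference into a near range controlled by \eqref{K2} (where the integrand vanishes unless a point lies near the slab, since $u_+-u_-\equiv 2$ makes the two Gagliardo integrands coincide on either side) and a far range controlled by \eqref{K4}, with the half-space-to-half-space interaction giving $\int (\mathrm{dist})^{-\beta}$, convergent precisely because $\beta>1$ — this is exactly the paper's decomposition into $I$ and $J_1,J_2,J_3$. The minor informalities in your write-up (treating $\bar u$ as Lipschitz, which tacitly assumes regularity of $u_\pm$, and the cell-wise rather than absolute convergence of the $H$-term) are present in the paper's own proof as well.
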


\begin{proof}
Since the potential term of $\mathcal{F}_{\omega}$ vanishes  at $u_+$ and $u_-$ (thanks to $\eqref{ugualeenergia}$) for a.e.\ $x\in \setR$, it is obviously finite if we evaluate it in $\bar{u}$. So we only have to estimate the kinetic term and thanks to $\eqref{K2}$ and $\eqref{K4}$ it is sufficient to prove that
\begin{equation}\label{3.6}
\begin{aligned}
\int_{\tR}&\Big(\int_{B_{\bar{R}}(x)}\frac{|\bar{u}(x)-\bar{u}(y)|^2-|u_+(x)-u_+(y)|^2}{|x-y|^{N+2s}}\di y\\
&+\int_{\setR \setminus B_{\bar{R}}(x)}\frac{|\bar{u}(x)-\bar{u}(y)|^2-|u_+(x)-u_+(y)|^2}{|x-y|^{N+\beta}}\di y\Big)\di x<+\infty.
\end{aligned}
\end{equation}
Less than an affine transformation we may assume $\omega=e_N$ and for simplicity we may also suppose that $A=0$ and $B=1$ so that $\tR=[0,1]^{N-1}\times \mathbb{R}$. Accordingly $\eqref{3.6}$ is equivalent to
\begin{equation}\label{I}
I:=\int_{[0,1]^{N-1}\times \mathbb{R}}\int_{B_{\bar{R}}(x)}\frac{|\bar{u}(x)-\bar{u}(y)|^2-|u_+(x)-u_+(y)|^2}{|x-y|^{N+2s}}\di y \di x<+\infty
\end{equation}
and 
\begin{equation}\label{J}
J:=\int_{[0,1]^{N-1}\times \mathbb{R}}\int_{\setR \setminus B_{\bar{R}}(x)}\frac{|\bar{u}(x)-\bar{u}(y)|^2-|u_+(x)-u_+(y)|^2}{|x-y|^{N+\beta}}\di y\di x<+\infty.
\end{equation}
Recalling the definition of $\bar{u}$ it follows that
\begin{equation}\label{I1}
I=\int_{[0,1]^{N-1}\times [-\bar{R},\bar{R}+1]}\int_{B_{\bar{R}}(x)}\frac{|\bar{u}(x)-\bar{u}(y)|^2-|u_+(x)-u_+(y)|^2}{|x-y|^{N+2s}}\di y \di x
\end{equation}
and being $\bar{u}$ Lipschitz, we get
\begin{equation}
\begin{aligned}
I&\le 4(1+\delta_0)^2 \int_{[0,1]^{N-1}\times [-\bar{R},\bar{R}+1]}\Big(\int_{B_{\bar{R}}(x)}\frac{\di y}{|x-y|^{N+2s-2}}\Big)\di x\\
&=\frac{2N\alpha_N(1+\delta_0)^2}{1-s}(2\bar{R}+1)\bar{R}^{2-2s}
\end{aligned}
\end{equation}
which implies $\eqref{I}$.

Now to prove $\eqref{J}$ we write $J=J_1+J_2+J_3$ with
\[
J_1:=\int_{[0,1]^{N-1}\times [2,+\infty)}\Big(\int_{\setR \setminus B_{\bar{R}}(x)}\frac{|\bar{u}(x)-\bar{u}(y)|^2-|u_+(x)-u_+(y)|^2}{|x-y|^{N+\beta}}\di y\Big)\di x,
\]
\[
J_2:=\int_{[0,1]^{N-1}\times (-\infty,-1)}\Big(\int_{\setR \setminus B_{\bar{R}}(x)}\frac{|\bar{u}(x)-\bar{u}(y)|^2-|u_+(x)-u_+(y)|^2}{|x-y|^{N+\beta}}\di y\Big)\di x,
\]
\[
J_3:=\int_{[0,1]^{N-1}\times [-1,2]}\Big(\int_{\setR \setminus B_{\bar{R}}(x)}\frac{|\bar{u}(x)-\bar{u}(y)|^2-|u_+(x)-u_+(y)|^2}{|x-y|^{N+\beta}}\di y\Big)\di x.
\]
By the definition of $\bar{u}$ we have that
\begin{equation}
\begin{aligned}
J_1&\le \int_{[0,1]^{N-1}\times [2,+\infty)}\Big(\int_{\mathbb{R}^{N-1}\times (-\infty,1]}\frac{|u_-(x)-\bar{u}(y)|^2}{|x-y|^{N+\beta}}\di y\Big)\di x\\
&\le 4(1+\delta_0)^2\int_{[0,1]^{N-1}\times [2,+\infty)}\Big(\int_{\mathbb{R}^{N-1}\times (-\infty,1]}\frac{\di y}{|x-y|^{N+\beta}}\Big)\di x.
\end{aligned}
\end{equation}
Writing $x=(x',x_N)\in \mathbb{R}^{N-1}\times \mathbb{R}$, $y=(y',y_N)\in \mathbb{R}^{N-1}\times \mathbb{R}$ and substituing $z':=~(y'-x')/|x_N-y_N|$, we get
\begin{equation}
\begin{aligned}
\int_{\mathbb{R}^{N-1}\times (-\infty,1]}\frac{\di y}{|x-y|^{N+\beta}}&=\int_{-\infty}^1 |x_N-y_N|^{-N-\beta}\Big[\int_{\mathbb{R}^{N-1}}\Big(1+\frac{|x'-y'|^2}{|x_N-y_N|^2}\Big)^{-\frac{N+\beta}{2}}\di y' \Big]\di y_N\\
&=\int_{-\infty}^1 |x_N-y_N|^{-1-\beta}\Big[\int_{\mathbb{R}^{N-1}}(1+|z'|^2)^{-\frac{N+\beta}{2}}\di z' \Big]\di y_N\\
=&\frac{\Theta}{\beta}(x_N-1)^{-\beta},
\end{aligned}
\end{equation}
where
\[
\Theta:=\int_{\mathbb{R}^{N-1}}(1+|z'|^2)^{-\frac{N+\beta}{2}}\di z'<+\infty.
\]
Therefore
\[
J_1\le \frac{4(1+\delta_0)^2}{\beta}\Theta \int_2^{+\infty}(x_N-1)^{-\beta}\di x_N=4(1+\delta_0)^2\frac{\Theta}{(\beta-1)\beta},
\]
since $\beta >1$. Analogously it is easy to see that $J_2$ is finite too. Thus we pass to estimate $J_3$.

Since $\bar{u}$ is a bounded function we have
\begin{equation}
\begin{aligned}
J_3&\le 4(1+\delta_0)^2\int_{[0,1]^{N-1}\times [-1,2]}\Big(\int_{\setR \setminus B_{\bar{R}}(x)}\frac{\di y}{|x-y|^{N+\beta}}\Big)\di x=\frac{12 N \alpha_N}{\beta}\bar{R}^{-\beta}
\end{aligned}
\end{equation}
and $\eqref{J}$ follows.
\end{proof}
Note that condition $\eqref{K4}$ allows us to have the integrability of the first addendum of $\F$.

With this result in hand we can prove that
\begin{thm}\label{esistenza}
There exists an absolute minimizer of the functional $\mathcal{F}_\omega$ in the class $\mathcal{A}_\omega^{A,B}$.
\end{thm}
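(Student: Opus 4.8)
The plan is to run the direct method of the calculus of variations on $\F$ over the class $\A$; the only genuinely non-standard features are that the fundamental domain $\tR$ is unbounded in the direction $\omega$ and that $\F$ is a renormalized (difference) functional, so neither the compactness nor the lower-semicontinuity step is automatic. I would start from a minimizing sequence $(u_k)\subset\A$. By Theorem \ref{Fwisfinite} applied to $\bar u$ we have $\inf_{\A}\F\le\F(\bar u)<+\infty$, so we may assume $\F(u_k)\le\F(\bar u)+1$ for all $k$. The first reduction is a truncation between the two ground states: replace $u_k$ by $\min\{\max\{u_k,u_-\},u_+\}$. This function still belongs to $\A$ — on $\{\omega\cdot x\le A\}$ one has $u_k\ge 1-\delta_0>-1+\delta_\eta\ge u_-$, so the double truncation returns a value in $[1-\delta_0,1+\delta_\eta]$, and symmetrically on $\{\omega\cdot x\ge B\}$ — and by two applications of Lemma \ref{maxmin&funct} (with $U=\tR$, $V=\setR$ in the kinetic estimate and $U=V=\tR$ in the potential one), using $\F(u_+)=0$ and the symmetry of the wells \eqref{W6}, one checks that the truncation does not increase $\F$, as in the corresponding step of \cite{CV}. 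Hence we may assume $u_-\le u_k\le u_+$, in particular $\|u_k\|_{L^\infty(\tR)}\le 1+\delta_0$ uniformly in $k$.

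Next I would localize and extract a limit. Splitting $\F$ into its kinetic and potential parts as in \eqref{Fw}, the bound $\F(u_k)\le C$ and $W\ge 0$ force the contributions coming from $\{\omega\cdot x\le A\}$ and $\{\omega\cdot x\ge B\}$ to be controlled exactly as in the proof of Theorem \ref{Fwisfinite}: away from the slab $S:=\{A\le\omega\cdot x\le B\}$ the renormalized integrands $|u_k(x)-u_k(y)|^2-|u_+(x)-u_+(y)|^2$ and $W(x,u_k)-W(x,u_+)$ are essentially pinned, so that all the energy is concentrated in a bounded neighbourhood $S'$ of $S$ inside $\tR$, a set of finite Lebesgue measure. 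On $S'$ the energy bound and \eqref{K2} give a uniform bound on $[u_k]_{H^s(S')}$; by the compact embedding $H^s(S')\hookrightarrow L^2(S')$ and a diagonal argument over an exhaustion of $\tR$ we extract a subsequence (not relabelled) with $u_k\to u$ in $L^2_{\mathrm{loc}}(\tR)$ and a.e., and $u_k\rightharpoonup u$ in $H^s_{\mathrm{loc}}$. The one-sided constraints defining $\A$ pass to the a.e.\ limit, so $u\in\A$.

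It remains to prove $\F(u)\le\liminf_k\F(u_k)$. For the potential part this is Fatou's lemma: after the reduction to $S'$, the integrand $W(x,u_k)+H(x)u_k-W(x,u_+)-H(x)u_+$ is bounded below by the $L^1$ function $-W(x,u_+)-C$. For the kinetic part I would write the integrand as $g_k(x,y):=\bigl(|u_k(x)-u_k(y)|^2-|u_+(x)-u_+(y)|^2\bigr)K(x,y)$: it converges a.e.\ to $g(x,y)$ and satisfies $g_k(x,y)\ge -|u_+(x)-u_+(y)|^2K(x,y)$, whose integral over $S'\times\setR$ is finite — this is precisely the computation carried out for $I$ and $J_1,J_2,J_3$ in Theorem \ref{Fwisfinite}, using \eqref{K2} near the diagonal and \eqref{K4} at infinity — so Fatou applies once more; alternatively one invokes the weak lower semicontinuity in $H^s_{\mathrm{loc}}$ of $v\mapsto\mathscr{K}(v;S';\setR)$ together with the exact cancellation of the bulk terms outside $S'$. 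Adding the two contributions gives $\F(u)\le\liminf_k\F(u_k)=\inf_{\A}\F$, so that $u$ is the desired absolute minimizer.

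The step I expect to be the real obstacle is the localization in the second paragraph: one has to make rigorous that a finite-energy — and a fortiori a minimizing — configuration coincides with $u_\pm$ outside the slab up to an error that is summable against $K$, so that the renormalized functional, which is an $\infty-\infty$ expression in general, can be reduced to an honest integral over the finite-measure set $S'$ where compactness and Fatou are available. This is precisely where the renormalization forced by the mesoscopic term $Hu$ — and hence by the non-constant pure phases $u_\pm$ — really departs from the analysis of \cite{CV}, in which the pure phases are the constants $\pm 1$ and no renormalization is needed.
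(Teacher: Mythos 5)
There is a genuine gap, and it sits exactly where you flag it: the localization step. The constraints defining $\A$ are only one-sided ($u\ge 1-\delta_0$ below the slab, $u\le -1+\delta_0$ above it), so even after your truncation a minimizing sequence need not coincide with $u_\pm$ outside $S$, nor be close to them beyond an error of size $\delta_0+\delta_\eta$ on a set of infinite measure; nothing in the finiteness of $\F(u_k)$ forces the renormalized kinetic and potential contributions to ``concentrate in a bounded neighbourhood $S'$ of $S$''. Since both your compactness argument and your Fatou/lower-semicontinuity argument (the integrable minorant $-|u_+(x)-u_+(y)|^2K(x,y)$ is only summable on $S'\times\setR$, and the ``exact cancellation of the bulk terms outside $S'$'' is precisely the unproved claim) rest on this localization, the proof as written does not close. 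The paper avoids the issue entirely: it bounds $[u_j]_{H^s(\Omega_k)}$ on \emph{every} truncated cylinder $\Omega_k=\tR\cap\{|\omega\cdot x|\le k\}$ directly from \eqref{K2}, the uniform $L^\infty$ bound, and the key inequality $0\le\F(u_j,\Omega_k)\le\F(u_j,\tR)$, which is obtained by choosing $k$ with $k\omega\in\mathbb{Z}^N$ so that $\Omega_k$ is a periodicity domain for $u_+$ and exploiting the periodic minimality of $u_+$; a diagonal argument in $j$ and $k$ then gives a.e.\ convergence on all of $\tR$, after which the admissibility of the limit and Fatou conclude. No claim that the energy lives near the slab is needed.

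A secondary problem is the truncation between $u_-$ and $u_+$. Lemma \ref{maxmin&funct} only gives $\F(\min\{u_k,u_+\})+\F(\max\{u_k,u_+\})\le\F(u_k)+\F(u_+)$, and to deduce that the truncation does not increase $\F$ you need $\F(\max\{u_k,u_+\})\ge 0$; but $\max\{u_k,u_+\}$ violates the upper constraint (it equals $u_+\approx 1$ on $\{\omega\cdot x\ge B\}$), so it lies in no class $\mathcal{A}_\omega^{A',B'}$ and neither the nonnegativity of $\F$ on $\A$ nor the minimality of $u_+$ established so far applies to it. The paper sidesteps this by truncating at the constant levels $\pm(1+\delta_0)$, which is justified pointwise by \eqref{W5} (as in the periodic-minimizer theorem), and only needs $|u_j|\le 1+\delta_0$. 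This part of your argument is probably repairable (e.g.\ by a periodicity-cell comparison as above), but as stated it is not a consequence of the tools you invoke; the localization gap, by contrast, is a missing idea rather than a missing detail.
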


\begin{proof}
We use the standard Direct Method of the Calculus of variations.

By Theorem \ref{Fwisfinite} and since $u^+$ is minimizer for $\E$, we have that $\mathcal{F}_\omega\ge 0$ and hence
\[
m:=\inf \{\mathcal{F}_\omega(u): u\in \mathcal{A}_\omega^{A,B} \}\in [0,+\infty).
\]
So, if $\{u_j \}_{j\in \N}\subseteq \mathcal{A}_\omega^{A,B}$ is a minimizing sequence, we may suppose that
\begin{equation}\label{minimizzantestima}
|u_j|\le 1+\delta_0\quad \text{a.e.\ in}\; \setR.
\end{equation}
Then we consider an integer $k>\max \{-A,B \}$ and the Lipschitz domains
\[
\Omega_k:=\tR\cap \{x\in \setR: |\omega \cdot x|\le k \}.
\]
Thanks to $\eqref{minimizzantestima}$ and $\eqref{K2}$ we obtain
\begin{equation}\label{stimanormagagliardo}
\begin{aligned}
[u_j]_{H^s(\Omega_k)}^2&\le \int_{\Omega_k}\Big(\int_{B_1(x)} \frac{|u_j(x)-u_j(y)|^2}{|x-y|^{N+2s}}\di y \Big)\di x\\
&+4(1+\delta_0)^2\int_{\Omega_k}\Big(\int_{\setR \setminus B_1(x)} \frac{\di y}{|x-y|^{N+2s}} \Big)\di x\\
&\le\frac{2}{\lambda}\mathcal{F}_\omega(u_j,\Omega_k)+\frac{2}{\lambda}\int_{\Omega_k}\int_{\mathbb{R}^N}\frac{|u_+(x)-u_+(y)|^2}{|x-y|^{N+2s}}\di x \di y+\frac{2}{\lambda}\int_{\Omega_k}W(x,u_+(x))\di x\\
&+\frac{2}{\lambda}\int_{\Omega_k}H(x)u_+(x)\di x-\frac{2}{\lambda}\int_{\Omega_k}W(x,u_j(x))\di x-\frac{2}{\lambda}\int_{\Omega_k}H(x)u_j(x)\di x\\
&+2\frac{(1+\delta_0)^2}{s}N\alpha_N|\Omega_k|,
\end{aligned}
\end{equation}
where we denote with
\begin{equation}
\begin{aligned}
\mathcal{F}_\omega(u,\Omega_k)&:=\frac{1}{2}\int_{\Omega_k}\int_\setR \Big( |u(x)-u(y)|^2- |u_+(x)-u_+(y)|^2\Big)K(x,y)\di x \di y\\
&+\int_{\Omega_k}\Big( W(x,u(x))-W(x,u_+(x))\Big)\di x+\int_{\Omega_k}H(x)\Big(u(x)-u_+(x) \Big)\di x.
\end{aligned}
\end{equation}
Now we take $k\in \mathbb{N}$ such that $k\omega \in \mathbb{Z}^N$, so that $\Omega_k$ is a periodicity domain for $u_+$.
From this and the fact that $u_+$ is minimizer for $\E$ on all the domains $\Omega_k$, we get
\[
0\le \F(u_j,\Omega_k)\le \F(u_j,\tilde{\mathbb{R}}^N),
\]
so $\eqref{stimanormagagliardo}$ becomes
\begin{equation}
\begin{aligned}
[u_j]_{H^s(\Omega_k)}^2&\le\frac{2}{\lambda}\F(u_j)+ \frac{2}{\lambda}\int_{\Omega_k}\int_{\mathbb{R}^N}\frac{|u_+(x)-u_+(y)|^2}{|x-y|^{N+2s}}\di x \di y+\frac{2}{\lambda}\int_{\Omega_k}W(x,u_+(x))\di x\\
&+\frac{2}{\lambda}\int_{\Omega_k}H(x)u_+(x)\di x+2|\Omega_k|\Big(\frac{1}{\lambda}\eta +\frac{(1+\delta_0)^2}{s}N\alpha_N\Big).
\end{aligned}
\end{equation}
Hence $\{u_j\}_{j\in \N}$ is bounded in $H^s(\Omega_k)$ uniformly in $j$. Since $H^s(\Omega_k)\hookrightarrow \hookrightarrow L^2(\Omega_k)$ (see \cite{DPV}*{Theorem $7.1$}), less than extract a subsequence, $u_j\rightarrow u$ in $L^2(\Omega_k)$ and a.e.\ in $\Omega_k$. Now we use a diagonal argument (on $j$ and $k$) to find a subsequence $\{u_j^* \}_{j\in \N}$ of $\{u_j\}_{j\in \N}$ such that ${{u_j}^*} \rightarrow u$ a.e.\ in $\tR$. We may identify the $u_j^*$'s and $u$ with their $\sim$-periodic extension to $\setR$ so that the convergence will be in the full space $\setR$.

As a consequence $u\in \mathcal{A}_\omega^{A,B}$ and using Fatou's Lemma we get $\mathcal{F}_\omega(u)=m$ that concludes the proof.
\end{proof}

\subsection{The minimal minimizer}\label{minmin}
Define
\[
\mathcal{M}_\omega^{A,B}:=\{u\in  \mathcal{A}_\omega^{A,B}:\mathcal{F}_\omega(u)\le \mathcal{F}_\omega(v)\; \text{ for any}\; v\in \mathcal{A}_\omega^{A,B} \}
\]
the set of the absolute minimizers of $\mathcal{F}_\omega$ in $\mathcal{A}_\omega^{A,B}$. Observe that from Proposition \ref{esistenza}, $\mathcal{M}_\omega^{A,B}$ is not empty, hence we can introduce the following
\begin{defn}\label{minimalminimizer}
We call $u_\omega^{A,B}$ a minimal minimizer when it is the infimum of $\mathcal{M}_\omega^{A,B}$ if we consider $\mathcal{M}_\omega^{A,B}$ subset of the partially ordered set $(\mathcal{A}_\omega^{A,B}, \le)$. In particular $u_\omega^{A,B}$ is the unique function of $\mathcal{A}_\omega^{A,B}$ such that
\begin{equation}\label{1minmimnimizer}
u_\omega^{A,B}\le u \; \text{in}\; \setR \text{ for every}\; u\in \mathcal{M}_\omega^{A,B}
\end{equation}
and
\begin{equation}\label{2minminimizer}
\text{if}\; v\in \A \; \text{is such that}\; v\le u \; \text{in}\; \setR\ \text{for every}\; u\in \M,\; \text{then}\; v\le \uom\; \text{in}\; \setR.
\end{equation}
\end{defn}
The existence of $\uom$ is not obvious, so we will dedicate the rest of the section to show it.

First of all we need to prove that the minimum between two elements of $\M$ still belongs to $\M$. To do this we show the following

\begin{lemma}\label{mininM}
Let $A$, $A'$, $B$, $B'$ be real numbers such that $A<A'$ and $B<B'$ with $A<B$ and $A'<B'$. If $u \in \M$ and $v\in \mathcal{M}_\omega^{A',B'}$, then $\min \{u,v\}\in \M$.
\end{lemma}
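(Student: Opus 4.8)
The strategy is the standard "min/max competitor" argument adapted to the nonlocal energy $\F$. Write $w := \min\{u,v\}$ and $z := \max\{u,v\}$. First I would check that $w \in \A$ and $z \in \mathcal{M}_\omega^{A',B'}$ are admissible. For $w$: if $\omega\cdot x \le A$, then both $u(x)\ge 1-\delta_0$ (since $u\in\A$) and $v(x)\ge 1-\delta_0$ (since $A<A'$ and $v\in\mathcal{M}_\omega^{A',B'}$), so $w(x)\ge 1-\delta_0$; if $\omega\cdot x \ge B$, then $u(x)\le -1+\delta_0$, hence $w(x)\le u(x)\le -1+\delta_0$. Thus $w\in\A$. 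Symmetrically, for $z$: if $\omega\cdot x\le A'$ then $z(x)=\max\{u(x),v(x)\}\ge v(x)\ge 1-\delta_0$; if $\omega\cdot x\ge B'$ then $u(x)\le -1+\delta_0$ (as $B<B'$) and $v(x)\le -1+\delta_0$, so $z(x)\le -1+\delta_0$. Hence $z\in\mathcal{M}_\omega^{A',B'}$ as an admissible function (whether it is actually a minimizer is what the inequality chain will show, or at least that its energy is no smaller than the minimum).

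Next, the key inequality. Using Lemma \ref{maxmin&funct} with $U=\tR$, $V=\setR$ (for the kinetic part) and with the potential part, one gets, after adding and subtracting the reference terms $|u_+(x)-u_+(y)|^2$, $W(x,u_+(x))$, $H(x)u_+(x)$ that appear in the definition of $\F$,
\[
\F(w) + \F(z) \le \F(u) + \F(v).
\]
Here one must be a little careful: $\F(z)$ is computed as an integral over the fundamental domain $\tR$ of the relation $\sim_\omega$, while $v\in\mathcal{M}_\omega^{A',B'}$ is periodic with respect to $\sim_{\omega}$ as well (the fundamental domains agree since the direction $\omega$ is the same), so all four functionals are integrals over the same $\tR$ and the pointwise max/min inequalities from Lemma \ref{maxmin&funct} apply termwise; the renormalization by $u_+$ cancels out on both sides because it is the same constant reference on each side. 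One should also note that all quantities are finite: $u,v$ have finite $\F$-energy, and $|w|,|z|\le 1+\delta_0$ so the subtracted terms are integrable exactly as in the proof of Theorem \ref{Fwisfinite}, so no $\infty-\infty$ issue arises.

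Now I would conclude. Since $v\in\mathcal{M}_\omega^{A',B'}$ is a minimizer and $z\in\mathcal{M}_\omega^{A',B'}$ (admissible for that class), we have $\F(z)\ge \F(v)$. Plugging into the displayed inequality gives $\F(w) + \F(v) \le \F(u) + \F(v)$, hence $\F(w)\le \F(u)$. But $u\in\M$ is an absolute minimizer of $\F$ in $\A$ and $w\in\A$, so $\F(w)\ge \F(u)$ as well; therefore $\F(w)=\F(u)$, which means $w=\min\{u,v\}\in\M$. The main obstacle — really the only delicate point — is making sure the two classes $\M$ and $\mathcal{M}_\omega^{A',B'}$ are compared over a common fundamental domain so that Lemma \ref{maxmin&funct} can be applied termwise and the $u_+$-renormalizations cancel; the ordering conditions $A<A'$, $B<B'$ are exactly what makes the admissibility checks for $w$ and $z$ go through.
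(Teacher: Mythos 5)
Your proof is correct and follows essentially the same route as the paper: admissibility of $\min\{u,v\}$ in $\A$ and of $\max\{u,v\}$ in $\mathcal{A}_\omega^{A',B'}$, the inequality $\F(\min\{u,v\})+\F(\max\{u,v\})\le\F(u)+\F(v)$ from Lemma \ref{maxmin&funct}, and the minimality of $v$ to cancel the max term and conclude $\F(\min\{u,v\})\le\F(u)$. The only blemish is the notational slip writing $z\in\mathcal{M}_\omega^{A',B'}$ where you mean $z\in\mathcal{A}_\omega^{A',B'}$, which you immediately correct in substance.
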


\begin{proof}
Observing that $\min \{u,v\}\in \A$ and $\max \{u,v\}\in \mathcal{A}_\omega^{A',B'}$ and using Lemma \ref{maxmin&funct}, we get
\[
\F(\min \{u,v\})+\F(\max \{u,v\})\le \F(u)+\F(v).
\]
Now, since $v\in \mathcal{M}_\omega^{A',B'}$ we have
\[
\F(\min \{u,v\})+\F(\max \{u,v\})\le \F(u)+\F(\max \{u,v\})
\]
and hence
\[
\F(\min \{u,v\})\le \F(u),
\]
that is $\min \{u,v\}\in \M$.
\end{proof}
As a consequence, if we choose $A=A'$ and $B=B'$ we obtain this 
\begin{coro}\label{2elinMmininM}
If $u$, $v\in \M$, then $\min \{u,v\}\in \M$.
\end{coro}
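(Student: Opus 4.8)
The plan is simply to re-run the argument of Lemma~\ref{mininM} in the degenerate case $A'=A$, $B'=B$. The strict inequalities $A<A'$, $B<B'$ were used there only to guarantee that the two envelopes $\min\{u,v\}$ and $\max\{u,v\}$ land in the prescribed admissible classes when $u$ and $v$ a priori come from different ones; if $u,v\in\M$ belong to the \emph{same} class this is automatic, and the rest of the proof goes through verbatim.

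Concretely, first I would observe that $\min\{u,v\},\max\{u,v\}\in\A$: on $\{\omega\cdot x\le A\}$ both $u$ and $v$ are $\ge 1-\delta_0$, hence so are their minimum and maximum; on $\{\omega\cdot x\ge B\}$ both are $\le -1+\delta_0$, hence so are their minimum and maximum. Next I would apply Lemma~\ref{maxmin&funct} with $U=\tR$, $V=\setR$ to the kinetic terms and with $U=V=\tR$ to the potential terms. Adding the two resulting inequalities, the renormalizing contributions of $u_+$ — both $|u_+(x)-u_+(y)|^2$ and $W(x,u_+(x))+H(x)u_+(x)$ — enter with the same multiplicity on each side and cancel, so that one legitimately obtains
\[
\F(\min\{u,v\})+\F(\max\{u,v\})\le\F(u)+\F(v).
\]
(If one prefers to avoid manipulating the possibly infinite unrenormalized kinetic energies, one may carry this estimate out first on the truncated domains $\Omega_k$ used in the proof of Theorem~\ref{esistenza} and then let $k\to\infty$.) Finally, since $v$ is an absolute minimizer of $\F$ in $\A$ and $\max\{u,v\}\in\A$, we have $\F(\max\{u,v\})\ge\F(v)$, whence $\F(\min\{u,v\})\le\F(u)=m$; as $u\in\M$ the reverse inequality is trivial, so $\F(\min\{u,v\})=m$ and $\min\{u,v\}\in\M$.

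There is essentially no obstacle here, since the statement is a direct specialization of Lemma~\ref{mininM}; the only point deserving a word of care is the bookkeeping of the renormalization when combining the two inequalities of Lemma~\ref{maxmin&funct}, which is resolved either by noting that the $u_+$-terms cancel exactly before any integration is performed, or by the truncation argument mentioned above.
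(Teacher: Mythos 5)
Your proposal is correct and follows essentially the same route as the paper: there the corollary is obtained simply by taking $A=A'$ and $B=B'$ in Lemma~\ref{mininM}, and your argument is precisely that lemma's proof rerun in this degenerate case, with the same use of Lemma~\ref{maxmin&funct} and of the minimality of $v$ applied to $\max\{u,v\}$. Your additional remarks (that the strict inequalities $A<A'$, $B<B'$ are immaterial once $u,v$ lie in the same class, and that the $u_+$-renormalization either cancels pointwise or can be handled by truncation on the domains $\Omega_k$) only make explicit points the paper leaves implicit.
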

At this point we can show that $\M$ is also closed with respect to take the minimum among a countable family of its elements:
\begin{lemma}\label{countablemin}
If $\{u_n \}_{n\in \N}$ is a sequence of elements in $\M$, then $\inf_{n\in \N}u_n \in \M$.
\end{lemma}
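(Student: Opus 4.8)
The plan is to reduce the countable case to the finite case handled in Corollary \ref{2elinMmininM} and then pass to the limit. First I would set $v_n := \min\{u_1, \dots, u_n\}$. By induction on Corollary \ref{2elinMmininM} (applied $n-1$ times), each $v_n \in \M$; in particular $\F(v_n) = m$, where $m = \inf_{\A}\F$. Note that $\{v_n\}$ is a pointwise non-increasing sequence of functions, each taking values in $[-1-\delta_0, 1+\delta_0]$ once we invoke the truncation bound $\eqref{minimizzantestima}$ available for minimizers (alternatively, all elements of $\M$ satisfy this bound, as a minimizer can always be truncated without raising the energy via $\eqref{3.2}$). Hence $v_n \downarrow v := \inf_{n}u_n$ pointwise, and $v$ takes values in $[-1-\delta_0, 1+\delta_0]$; moreover $v \ge 1-\delta_0$ on $\{\omega\cdot x \le A\}$ and $v \le -1+\delta_0$ on $\{\omega\cdot x \ge B\}$ since every $u_n$ does, so $v \in \A$. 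It remains to show $\F(v) \le m$, which combined with $\F(v)\ge m$ (as $v\in\A$ and $m$ is the infimum) gives $\F(v) = m$, i.e. $v \in \M$.

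To prove $\F(v)\le m$, I would pass to the limit in $\F(v_n) = m$ using a combination of monotone and dominated convergence on the three terms of $\F$ (written in the renormalized form $\eqref{Fw}$). For the potential term $\int_{\tR}(W(x,v_n) - W(x,u_+))\,dx$ and the mesoscopic term $\int_{\tR}H(x)(v_n - u_+)\,dx$: since $v_n$ is bounded uniformly in $L^\infty$, $W$ is bounded, $H\in L^\infty$, and the domain $\tR$ integrates over the compact transversal times a bounded $\omega$-direction slab outside which $v_n = u_\pm$ identically — so these integrands are supported in a fixed bounded set and are uniformly bounded — dominated convergence applies and these terms converge to the corresponding expressions for $v$. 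For the kinetic term $\frac12\int_{\tR}\int_{\setR}\big(|v_n(x)-v_n(y)|^2 - |u_+(x)-u_+(y)|^2\big)K(x,y)\,dx\,dy$, I would argue that $v_n\to v$ a.e., so $|v_n(x)-v_n(y)|^2 - |u_+(x)-u_+(y)|^2 \to |v(x)-v(y)|^2 - |u_+(x)-u_+(y)|^2$ a.e., and then apply Fatou's Lemma to the nonnegative functional part to get $\liminf_n \K(v_n;\tR;\setR) \ge \K(v;\tR;\setR)$, hence $\F(v) \le \liminf_n \F(v_n) = m$. Here I use that $\F$ is bounded below by $0$ (as $u_+$ is a minimizer of $\E$ and the renormalization is by $u_+$) so the Fatou step is legitimate after splitting off the finite, convergent potential and mesoscopic pieces.

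The main obstacle is the kinetic term: unlike the potential and mesoscopic terms it is not supported in a bounded region, and the integrand $|v_n(x)-v_n(y)|^2 - |u_+(x)-u_+(y)|^2$ need not have a sign, so I cannot apply monotone or dominated convergence directly. The clean way around this is to observe that by Lemma \ref{maxmin&funct} applied with $U = V = \tR$ to the pair $v_{n-1}, u_n$ — whose min is $v_n$ and whose max is $\max\{v_{n-1},u_n\}$, an element of $\A$ hence with $\F \ge m$ — one gets $\F(v_n) + m \le \F(v_n) + \F(\max\{v_{n-1},u_n\}) \le \F(v_{n-1}) + \F(u_n) = 2m$, consistent with $\F(v_n)=m$; this is just a recheck. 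For the actual limit I would instead work with $\F$ rewritten so that the only genuinely infinite-support object is the fixed, convergent ``cross'' quantity $\K(u_+;\tR;\setR)$ plus the a.e.-convergent, nonnegative Gagliardo-type double integral of $v_n$, and then Fatou on the latter gives lower semicontinuity, which is exactly what is needed. This semicontinuity argument is the same one already used in the proof of Theorem \ref{esistenza}, so no new analytic input is required; the point is simply to verify that pointwise (rather than weak) convergence still suffices, which it does because $v_n$ is monotone and uniformly bounded.
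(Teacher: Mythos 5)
Your proposal takes essentially the same route as the paper: define the finite minima $v_n=\min\{u_1,\dots,u_n\}$, which lie in $\M$ by an inductive use of Corollary \ref{2elinMmininM}, observe that they decrease pointwise to $\inf_n u_n\in\A$, and conclude by Fatou-type lower semicontinuity of $\F$ along this sequence, exactly as in the paper's proof. The only slip is cosmetic: admissible functions in $\A$ need not coincide with $u_\pm$ outside the slab $\{A\le\omega\cdot x\le B\}$ (they are merely within $\delta_0$ of $\pm1$ there), so the potential and mesoscopic integrands are not compactly supported as you assert when invoking dominated convergence; this does not change the substance of the argument, which matches the paper's direct application of Fatou's Lemma to $\F(v_j)$.
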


\begin{proof}
Define $u_*:=\inf_{j\in \N} u_j$ and inductively the sequence
\begin{equation}
v_j:=\begin{cases}
u_1\quad &\text{if}\; j=1\\
\min \{v_{j-1},u_j \}&\text{if}\; j\ge 2.
\end{cases}
\end{equation}
Corollary \ref{2elinMmininM} gives us that $\{v_j\}_{j\in \N}\subseteq \M$. On the other hand $v_j \rightarrow u_*$ a.e.\ in $\setR$, so from an application of Fatou's Lemma we have that $u_*\in \A$ and
\[
\F(u_*)\le \lim_{j\rightarrow +\infty} \F (v_j)=\F(v_k)
\]
for any $k\in \N$. Hence $u_* \in \M$.
\end{proof}

These results allow us to prove this

\begin{prop}
The minimal minimizer $\uom$ exists and belongs to $\M$.
\end{prop}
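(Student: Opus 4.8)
The plan is to exhibit $\uom$ as a countable infimum of elements of $\M$, chosen so that its minimality with respect to $\le$ comes for free, and then to verify that this infimum indeed satisfies the two defining properties \eqref{1minmimnimizer} and \eqref{2minminimizer} of Definition \ref{minimalminimizer}. First I would observe that $\M\subseteq \A$, and $\A$ as a subset of $L^2(\tR)$ is separable; hence I would like to pick a sequence $\{u_n\}_{n\in\N}\subseteq \M$ that is ``dense from below'' in the sense that $\inf_{n\in\N} u_n \le u$ in $\setR$ for every $u\in\M$. Concretely, one can take a countable set $\{u_n\}$ that is dense in $\M$ for the $L^2_{\mathrm{loc}}$ topology; then for any fixed $u\in\M$ there is a subsequence $u_{n_j}\to u$ a.e., and applying Corollary \ref{2elinMmininM} together with Lemma \ref{countablemin} to $\{\min\{u_{n_1},\dots,u_{n_j}\}\}_j$ shows $\inf_j \min\{u_{n_1},\dots,u_{n_j}\}\le u$ a.e.; since the full infimum $\inf_n u_n$ is $\le$ this quantity, we get $\inf_n u_n \le u$ a.e., and after adjusting on a null set (using that elements of $\M$ are continuous, by Theorem \ref{holderianita} and Remark \ref{minimisottoins}) this holds everywhere in $\setR$.

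Next I would set $\uom:=\inf_{n\in\N} u_n$. By Lemma \ref{countablemin}, $\uom\in\M$, so in particular $\uom$ is a genuine absolute minimizer of $\F$ in $\A$. Property \eqref{1minmimnimizer} is then exactly the density-from-below estimate established in the previous paragraph: $\uom\le u$ in $\setR$ for every $u\in\M$. For property \eqref{2minminimizer}, suppose $v\in\A$ satisfies $v\le u$ in $\setR$ for every $u\in\M$; applying this with $u=u_n$ for each $n$ gives $v\le u_n$ in $\setR$ for all $n$, hence $v\le \inf_n u_n=\uom$ in $\setR$. This gives both required inequalities, and uniqueness of such a function is immediate: if $\uom$ and $\tilde u$ both satisfy \eqref{1minmimnimizer}–\eqref{2minminimizer}, then $\uom\le\tilde u$ and $\tilde u\le\uom$.

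The main obstacle is the construction of the sequence $\{u_n\}$ witnessing property \eqref{1minmimnimizer}: one must be sure that ``approximating an arbitrary $u\in\M$ from the countable family'' can be upgraded to the pointwise-everywhere inequality $\uom\le u$. The two ingredients that make this work are (i) the stability of $\M$ under countable infima (Lemma \ref{countablemin}), which guarantees the candidate stays inside $\M$ and in particular inherits continuity, so that a.e.\ inequalities between continuous representatives become everywhere inequalities; and (ii) separability of $L^2$ on a fundamental domain of $\sim$, which is what produces a countable ``dense from below'' family in the first place. A minor technical point to record is that all functions here are normalized to satisfy $|u|\le 1+\delta_0$ (as in \eqref{minimizzantestima}), so the pointwise infimum is a well-defined real-valued function and the a.e.\ convergence arguments used in Lemma \ref{countablemin} apply verbatim.
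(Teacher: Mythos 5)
Your argument is correct and is essentially the paper's own proof: a countable family in $\M$ that is dense for a.e.\ convergence (the paper cites \cite{CV}*{Proposition B.2} for this separability), the definition $\uom:=\inf_n u_n$, membership in $\M$ via Lemma \ref{countablemin}, property \eqref{1minmimnimizer} by passing to the limit in $\uom\le u_{n_k}$, and \eqref{2minminimizer} from $v\le u_n$ for all $n$. The detour through finite minima and Corollary \ref{2elinMmininM} in your first paragraph is unnecessary (the inequality $\inf_n u_n\le u_{n_k}$ already suffices), but harmless.
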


\begin{proof}
Since $\M$ is separable with respect to convergence a.e.\ (see \cite{CV}*{Proposition $B.2$}), for all $u\in \M$ we can find a sequence $\{u_n \}_{n\in \N}\subseteq \M$ from which we can extract a subsequence $\{u_{n_k}\}_{k \in \N}$ such that $u_{n_k}\rightarrow u$ a.e.\ in $\setR$.
We define
\[
\uom:=\inf_{n\in \N}u_n 
\]
and from Lemma \ref{countablemin} we get $\uom \in \M$.

We claim that $\uom$ is the minimal minimizer, that is we have to check $\eqref{1minmimnimizer}$ and $\eqref{2minminimizer}$.

Let $u\in \M$ and $\{u_{n_k} \}_{k\in \N}$ a subsequence of $\{u_n\}_{n\in \N}$ such that $u_{n_k}\rightarrow u$ a.e.\ in $\setR$. By definition $\uom \le u_{n_k}$ in $\setR$ for all $k\in \N$. Therefore, passing to the limit as $k\rightarrow +\infty$, we obtain $\eqref{1minmimnimizer}$.

In order to prove $\eqref{2minminimizer}$ we have to suppose the existence of $v\in \A$ such that $v\le u$ for all $u\in \M$. This implies $v\le u_n$ for all $n\in \N$. Hence $v\le \uom $ and $\eqref{2minminimizer}$ is proved.
\end{proof}
\subsection{The doubling property}\label{subsection4.3}
The doubling property, or no-symmetry breaking property, is an important feature of the minimal minimizer. In this subsection we want to show that $\uom$ is not only the minimal minimizer of $\M$, but also the minimal minimizer over the functions with periodicity multiple of $\sim$. To do this we introduce a few more notation.

We denote with $z_1,\cdots, z_{N-1}\in \mathbb{Z}^N$ some vectors spanning the $(N-1)$-dimensional lattice induced by $\sim$. If $k\in \mathbb{Z}^N$ is such that $\omega \cdot k=0$ we can write
\[
k=\sum_{i=1}^{N-1}\mu_i z_i,
\]
with $\mu_1, \cdots, \mu_{N-1}\in \mathbb{Z}$. Then we take $m\in \mathbb{N}^{N-1}$ and we define the equivalence relation $\sim_m$ as
\[
x\sim_m y \Leftrightarrow x-y=\sum_{i=1}^{N-1}\mu_im_i z_i\quad \text{for}\; \mu_1,\cdots, \mu_{N-1}\in \mathbb{Z}.
\]
We denote with $\Rt:=\setR/_{\sim_m}$ and with $L_{\text{loc}}^2(\Rt)$ the periodic functions of $L_{\text{loc}}^2(\tilde{\mathbb{R}}^N)$. Note that in $\Rt$ there are $m_1\cdots m_{N-1}$ copies of $\tR$ because the relation $\sim$ is stronger than $\sim_m$ and $L_{\text{loc}}^2(\tilde{\mathbb{R}}^N)\subseteq L_{\text{loc}}^2(\Rt)$.

We define the space
\[
\mathcal{A}_{\omega,m}^{A,B}:=\{u\in L_{\text{loc}}^2(\Rt):u(x)\ge 1-\delta_0\; \;\text{if}\;\; \omega \cdot x \le A\; \text{and}\; u(x)\le-1+\delta_0\;\; \text{if}\;\; \omega \cdot x \ge B\},
\]
i.e.\ the admissible functions related to the new equivalence relation. Then we consider the functional
\begin{equation}\label{Fwm}
\begin{aligned}
\mathcal{F}_{\omega,m}(u)&:=\frac{1}{2}\int_{\Rt}\int_\setR \Big( |u(x)-u(y)|^2- |u_+(x)-u_+(y)|^2\Big)K(x,y)\di x \di y\\
&+\int_{\Rt}\Big( W(x,u(x))-W(x,u_+(x))\Big)\di x+\int_{\Rt}H(x)\Big(u(x)-u_+(x) \Big)\di x
\end{aligned}
\end{equation}
and the set of absolute minimizers 
\[
\Mm:=\{u\in  \Am:\mathcal{F}_{\omega,m}(u)\le \mathcal{F}_{\omega,m}(v)\; \text{ for any}\; v\in \Am \}.
\]
We call $\uomm$ the minimal minimizer of $\Mm$ whose existence is assured by the same arguments of Subsection \ref{minmin}.

Finally we indicate the translation of a function $u:\setR \rightarrow \mathbb{R}$ in the direction $z\in \setR$ as
\begin{equation}
\tau_z u(x):=u(x-z)\quad \text{for any}\; x\in \setR.
\end{equation}
At this point we can show that the minimal minimizer in a class of larger period coincides to that in a class of smaller period:
\begin{prop}\label{minimiuguali}
For any $m\in \mathbb{N}^{N-1}$, it results $\uomm=\uom$.
\end{prop}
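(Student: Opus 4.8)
The strategy is to establish the two pointwise inequalities $\uom\le\uomm$ and $\uomm\le\uom$ on $\setR$ separately; since $\uom$ and $\uomm$ are the minimal minimizers of $\M$ and $\Mm$, each inequality reduces to checking that the minimal minimizer of one class lies in the set of absolute minimizers of the other. The first step is to show that $\uomm$ is in fact $\sim$-periodic (the ``no symmetry breaking'' heart of the statement). Let $z_1,\dots,z_{N-1}\in\mathbb{Z}^N$ be the generators of the lattice induced by $\sim$ fixed above. For each $j$, the translations $\tau_{z_j}$ and $\tau_{-z_j}$ descend to well-defined maps of $\Rt$ into itself, since they preserve differences of points and hence the relation $\sim_m$. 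Because $\omega\cdot z_j=0$, translation by $\pm z_j$ leaves the two inequalities defining $\Am$ unchanged, so $\tau_{\pm z_j}\Am=\Am$; and because $K$, $W$, $H$ are $\mathbb{Z}^N$-periodic by \eqref{K3}, \eqref{W4}, \eqref{Hperiodica}, because $u_+$ is $\mathbb{Z}^N$-periodic, and because $z_j\in\mathbb{Z}^N$, the change of variables $x\mapsto x+z_j$ shows $\Fm(\tau_{\pm z_j}u)=\Fm(u)$ for every $u\in\Am$. Therefore $\tau_{\pm z_j}\uomm\in\Mm$, and since $\uomm\le u$ for every $u\in\Mm$ we get both $\uomm\le\tau_{z_j}\uomm$ and $\uomm\le\tau_{-z_j}\uomm$ a.e.\ in $\setR$; combining these forces $\uomm=\tau_{z_j}\uomm$ a.e.\ for every $j$, i.e.\ $\uomm$ is periodic with respect to $\sim$, so $\uomm\in\A$.

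The second step is an energy-scaling identity. Since $\Rt$ consists of exactly $m_1\cdots m_{N-1}$ copies of $\tR$, and since for any $\sim$-periodic $u$ the integrand of $\F(u)$ in the outer variable is again $\sim$-periodic (once more by the $\mathbb{Z}^N$-periodicity of $K$, $W$, $H$ and $u_+$), we obtain $\Fm(u)=(m_1\cdots m_{N-1})\,\F(u)$ for every $u\in\A$. As $\uomm\in\A$ minimizes $\Fm$ over $\Am\supseteq\A$, for every $v\in\A$ we have $(m_1\cdots m_{N-1})\F(\uomm)=\Fm(\uomm)\le\Fm(v)=(m_1\cdots m_{N-1})\F(v)$, so $\uomm\in\M$. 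The minimality of $\uom$ in $\M$ then yields $\uom\le\uomm$ in $\setR$.

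For the reverse inequality it suffices to check $\uom\in\Mm$. Clearly $\uom\in\A\subseteq\Am$. Both $\uom$ and $\uomm$ belong to $\M$ (the latter by the previous step), hence $\F(\uom)=\F(\uomm)=\min_{\A}\F$; applying the scaling identity to each of them gives $\Fm(\uom)=\Fm(\uomm)=\min_{\Am}\Fm$, that is $\uom\in\Mm$. Minimality of $\uomm$ in $\Mm$ now gives $\uomm\le\uom$, and together with the previous step we conclude $\uomm=\uom$.

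The only genuinely delicate point is the bookkeeping in the first step: one must verify carefully that translation by $z_j$ is well defined on the quotient $\Rt$ and that, after the change of variables, the integrals over a fundamental domain are truly invariant. Everything else is a formal consequence of the definitions, of the $\mathbb{Z}^N$-periodicity of $K$, $W$, $H$ and $u_+$, and of the order-theoretic characterization of the minimal minimizer established above.
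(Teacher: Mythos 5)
Your proof is correct, but it follows a genuinely different route from the paper's. You first prove directly that $\uomm$ is $\sim$-periodic: since $\omega\cdot z_j=0$ and $K$, $W$, $H$, $u_+$ are invariant under integer translations (by \eqref{K3}, \eqref{W4}, \eqref{Hperiodica}), the maps $\tau_{\pm z_j}$ preserve $\Am$ and $\Fm$, so $\tau_{\pm z_j}\uomm\in\Mm$, and the order-minimality of the minimal minimizer forces $\uomm\le\tau_{z_j}\uomm$ and $\uomm\le\tau_{-z_j}\uomm$, hence equality; you then transfer minimality back and forth with the scaling identity $\Fm=(m_1\cdots m_{N-1})\F$ on $\sim$-periodic functions. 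The paper never establishes the periodicity of $\uomm$ directly; instead it introduces the auxiliary function $\hat{u}_{\omega,m}^{A,B}=\min\{\uomm,\tau_{z_1}\uomm\}$, which is $\sim$-periodic by construction, and compares energies via the submodularity Lemma \ref{maxmin&funct} (arguing as in Lemma \ref{mininM}) to get $\uom\in\Mm$ and then $\hat{u}_{\omega,m}^{A,B}\in\M$, whence $\uom\le\hat{u}_{\omega,m}^{A,B}\le\uomm$. Your argument avoids the min/max lemma inside this proof and makes the ``no symmetry breaking'' content explicit as an intermediate statement, which is conceptually cleaner; its one delicate ingredient, the invariance $\Fm(\tau_{\pm z_j}u)=\Fm(u)$ (a change of variables over a fundamental domain of $\sim_m$), is the same device the paper itself uses later in Theorem \ref{Thimp} for $\tau_{-e_1}$, so it is fully consistent with the framework. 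The paper's min-with-translate construction, on the other hand, is the classical Caffarelli--de la Llave trick and runs parallel to Lemma \ref{mininM}, at the price of a slightly more opaque chain of inequalities; both proofs ultimately rely on the same scaling identity, which the paper also states without detailed justification.
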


\begin{proof}
Without loss of generality we consider $m_1=2$ and $m_i=1$ for every $i=2, \cdots, N-~1$. (The general case is analogous but the notation is much heavier).

First we show that $\uom\in \Mm$, since this implies that $\uomm \le \uomm$. To do this we consider $\tau_{z_1} \uomm$ (i.e.\ the translation of $\uom$ in the doubled direction of $z_1$) and we observe that it is an element of $\Mm$. Defining
\begin{equation}
\hat{u}_{\omega,m}^{A,B}:=\min \{\uomm, \tau_{z_1} \uomm \},
\end{equation}
we may see that it is $\sim$-periodic, so $\hat{u}_{\omega,m}^{A,B}\in \A$. Then, using Lemma \ref{maxmin&funct} and arguing as in the proof of Lemma \ref{mininM}, we have
\begin{equation*}
\Fm(\uom)=2\F(\uom)\le 2\F(\hat{u}_{\omega,m}^{A,B})=\Fm(\hat{u}_{\omega,m}^{A,B})\le \Fm(\uomm).
\end{equation*}
As a consequence $\uom \in \Mm$ and so $\uomm \le \uom$, being $\uomm$ the minimal minimizer of $\Mm$.

On the other hand, since $\hat{u}_{\omega,m}^{A,B}\in \Mm$ and $\uom \in \Am$, we get
\begin{equation*}
\F(\hat{u}_{\omega,m}^{A,B})=\frac{1}{2}\Fm(\hat{u}_{\omega,m}^{A,B})\le \frac{1}{2}\Fm(\uom)=\F(\uom),
\end{equation*}
from which it follows that $\hat{u}_{\omega,m}^{A,B} \in \M$. Hence
\[
\uom \le \hat{u}_{\omega,m}^{A,B}\le \uomm
\]
and the proof is complete.
\end{proof}

\subsection{Minimization with respect to compact perturbations}

In this subsection we want to construct a class $A$-minimizer for $\E$, so we have to prove that the elements of $\M$ are also minimizers of the energy $\E$ with respect to compact perturbations in the strip
\[
S_\omega^{A,B}:=\{x \in \setR: \omega \cdot x\in [A,B] \}.
\] 
We call
\[
\tilde{S}_\omega^{A,B}:=S_\omega^{A,B}/{\sim_m}
\]
the quotient of the strip with respect to the relation $\sim_m$ and we show a relation between $\E$ and $\Fm$.
\begin{lemma}
Let $u\in \Am$ such that $\Fm(u)<\infty$. For any $\Omega \subset \subset \tilde{S}_{\omega,m}^{A,B}$, consider $v$ another bounded function such that $u=v$ in $\setR \setminus \Omega$ and denote with $\varphi:=v-u$. Calling $\tilde{v}$ and $\tilde{\varphi}$ the $\sim_m$-periodic extension to $\setR$ of $v_{|_{\Rt}}$ and $\varphi_{|_{\Rt}}$ respectively, we have
\begin{equation}\label{linkEFm1}
\E(v,\Rt)-\E(u,\Rt)=\Fm(\tilde{v})-\Fm(u)+\int_{\Rt} \int_{\setR \setminus \Rt}\tilde{\varphi}(x)\tilde{\varphi}(y)K(x,y)\di x \di y.
\end{equation}
In particular if $u\in \Mm$,
\begin{equation}\label{linkEFm2}
\E(v,\Rt)-\E(u,\Rt)\ge \int_{\Rt} \int_{\setR \setminus \Rt}\tilde{\varphi}(x)\tilde{\varphi}(y)K(x,y)\di x \di y.
\end{equation}
\end{lemma}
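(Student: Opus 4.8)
The plan is to prove the identity \eqref{linkEFm1} by a direct, careful bookkeeping of the three parts of the energy (kinetic, potential, and mesoscopic), exploiting the $\sim_m$-periodicity of $\tilde v$, $\tilde\varphi$ and $u$, and then to derive \eqref{linkEFm2} as an immediate consequence of $u$ being a minimizer of $\Fm$.

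First I would treat the potential and mesoscopic parts, which are local and therefore easy. Since $v=u$ outside $\Omega$ and $\Omega\subset\subset\tilde S_{\omega,m}^{A,B}$, the functions $W(\cdot,v)+Hv$ and $W(\cdot,u)+Hu$ differ only on $\Omega$; integrating over $\Rt$ (which contains $\Omega$) versus recognizing that $\Fm$ has its potential/mesoscopic part integrated over $\Rt$ with the subtraction of the $u_+$-terms, one checks that the $u_+$-contributions cancel between $\Fm(\tilde v)$ and $\Fm(u)$, and the remaining difference $\int_{\Rt}\big(W(x,\tilde v)+H\tilde v - W(x,u)-Hu\big)\di x$ is exactly $\int_{\Rt}\big(W(x,v)+Hv\big)\di x - \int_{\Rt}\big(W(x,u)+Hu\big)\di x$, because $\tilde v=v$ on a fundamental domain $\Rt$ by construction. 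This matches the potential part of $\E(v,\Rt)-\E(u,\Rt)$.

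The kinetic part is the heart of the argument. Write the kinetic energy of $\E(\cdot,\Rt)$ as $\K(\cdot;\Rt;\Rt)+2\K(\cdot;\Rt;\setR\setminus\Rt)$, and the kinetic part of $\Fm$ as $\tfrac12\int_{\Rt}\int_{\setR}(|u(x)-u(y)|^2-|u_+(x)-u_+(y)|^2)K$. Again the $u_+$-terms cancel in the difference $\Fm(\tilde v)-\Fm(u)$. For the genuine terms, I would expand $\tilde v=u+\tilde\varphi$ and use the polarization $|u(x)+\tilde\varphi(x)-u(y)-\tilde\varphi(y)|^2 - |u(x)-u(y)|^2 = |\tilde\varphi(x)-\tilde\varphi(y)|^2 + 2(u(x)-u(y))(\tilde\varphi(x)-\tilde\varphi(y))$. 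The key point is comparing $\int_{\Rt}\int_{\setR}$ (as appears in $\Fm$) with the combination $\int_{\Rt}\int_{\Rt}+2\int_{\Rt}\int_{\setR\setminus\Rt}$ (as appears in $\E(\cdot,\Rt)$): by the $\sim_m$-periodicity of the integrand $\big(|\tilde v(x)-\tilde v(y)|^2-|u(x)-u(y)|^2\big)K(x,y)$ in the joint variable (using \eqref{K3}) together with Fubini, one has $\int_{\Rt}\int_{\setR\setminus\Rt}(\dots)=\int_{\setR\setminus\Rt}\int_{\Rt}(\dots)$ so that $\int_{\Rt}\int_{\setR}=\int_{\Rt}\int_{\Rt}+\int_{\Rt}\int_{\setR\setminus\Rt}$, whereas $\E$ carries a factor $2$ on the cross term. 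Tracking this discrepancy precisely, and using that $\tilde\varphi$ is $\sim_m$-periodic and supported (mod $\sim_m$) in $\Omega\subset\subset\tilde S_{\omega,m}^{A,B}$ so that $\tilde\varphi\equiv 0$ on $\setR\setminus\Rt$ near the relevant region — more precisely, $\tilde\varphi(x)=0$ whenever $x\notin\Rt$ is such that its $\sim_m$-class meets $\setR\setminus\Omega$ — the cross terms involving $(u(x)-u(y))(\tilde\varphi(x)-\tilde\varphi(y))$ reorganize, and the leftover is exactly the correction term $\int_{\Rt}\int_{\setR\setminus\Rt}\tilde\varphi(x)\tilde\varphi(y)K(x,y)\di x\di y$ appearing in \eqref{linkEFm1}. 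I expect this reindexing of the cross terms — keeping straight which copies of $\tilde\varphi$ vanish and reconciling the factor $2$ — to be the main obstacle; everything else is cancellation.

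Finally, for \eqref{linkEFm2}: if $u\in\Mm$ then $\Fm(\tilde v)\ge\Fm(u)$ since $\tilde v\in\Am$ (it agrees with $u$ outside $\Omega$ and hence still satisfies the boundary constraints defining $\Am$, because $\Omega\subset\subset\tilde S_{\omega,m}^{A,B}$ lies strictly between the constraint regions). Plugging $\Fm(\tilde v)-\Fm(u)\ge 0$ into \eqref{linkEFm1} yields $\E(v,\Rt)-\E(u,\Rt)\ge\int_{\Rt}\int_{\setR\setminus\Rt}\tilde\varphi(x)\tilde\varphi(y)K(x,y)\di x\di y$, which is \eqref{linkEFm2}.
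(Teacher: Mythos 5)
Your skeleton coincides with the paper's: the potential and mesoscopic parts match because $\tilde v=v$ on $\Rt$, the whole content lies in the kinetic cross term, and \eqref{linkEFm2} follows from \eqref{linkEFm1} since $\tilde v\in\Am$ and $u\in\Mm$. The gap is that the one step carrying all the content is exactly the one you leave as an expectation. After expanding, what must be shown is that the mismatch between the doubled cross term $2\mathscr{K}(v;\Rt;\setR\setminus\Rt)$ in $\E(v,\Rt)$ and the single cross term $\mathscr{K}(\tilde v;\Rt;\setR\setminus\Rt)$ hidden in $\Fm(\tilde v)$ equals precisely $\int_{\Rt}\int_{\setR\setminus\Rt}\tilde\varphi(x)\tilde\varphi(y)K(x,y)\di x\di y$; the paper disposes of this by invoking the computation of \cite{CV}*{Lemma~4.4.1}. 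Concretely, with $a=u(x)-u(y)$, $b=\tilde\varphi(x)$, $c=\tilde\varphi(y)$ (recall $v(y)=u(y)$ but $\tilde v(y)=u(y)+c$ for $y\notin\Rt$), the pointwise discrepancy of the cross terms is $ab+\tfrac12 b^2+(a+b)c-\tfrac12 c^2$, and one must prove the cancellation
\begin{equation*}
\int_{\Rt}\int_{\setR\setminus\Rt}\Bigl[(u(x)-u(y))\tilde\varphi(y)-\tfrac12\tilde\varphi(y)^2\Bigr]K(x,y)\di y\di x
=-\int_{\Rt}\int_{\setR\setminus\Rt}\Bigl[(u(x)-u(y))\tilde\varphi(x)+\tfrac12\tilde\varphi(x)^2\Bigr]K(x,y)\di y\di x,
\end{equation*}
which is obtained by unfolding $\setR\setminus\Rt$ into the lattice translates $\Rt+k$, shifting $x\mapsto x+k$, and using \eqref{K3} together with the $\sim_m$-periodicity of $u$ and $\tilde\varphi$; only then is the leftover exactly $\int\int \tilde\varphi(x)\tilde\varphi(y)K$. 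Your proposal asserts the conclusion of this reindexing without performing it.

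Moreover, the two justifications you offer in its place do not work. The identity ``$\int_{\Rt}\int_{\setR\setminus\Rt}(\dots)=\int_{\setR\setminus\Rt}\int_{\Rt}(\dots)$, hence $\int_{\Rt}\int_{\setR}=\int_{\Rt}\int_{\Rt}+\int_{\Rt}\int_{\setR\setminus\Rt}$'' is true for trivial reasons (splitting the inner domain, Fubini with a symmetric integrand) and uses no periodicity; it only records the factor-$2$ discrepancy, it does not resolve it. More seriously, the claim ``$\tilde\varphi(x)=0$ whenever $x\notin\Rt$ is such that its $\sim_m$-class meets $\setR\setminus\Omega$'' is false: since $\Omega$ sits inside a single fundamental domain, every $\sim_m$-class meets $\setR\setminus\Omega$, so your statement would force $\tilde\varphi\equiv 0$ on $\setR\setminus\Rt$ and the correction term in \eqref{linkEFm1} would vanish identically, which is exactly what does \emph{not} happen (the whole point of the lemma, and of its use in Proposition \ref{minimizerisminE}, is that $v$ is a compact perturbation while $\tilde v$ is its periodization, and their difference on $\setR\setminus\Rt$, namely $\tilde\varphi$, produces this generally nonzero term). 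Your final step deducing \eqref{linkEFm2} from $\tilde v\in\Am$ and the minimality of $u$ is correct and is the same as the paper's.
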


Observe that, being $\varphi$ compactly supported in $\tilde{S}_{\omega,m}^{A,B}$ and bounded, the right hand sides of  $\eqref{linkEFm1}$ and $\eqref{linkEFm2}$ are finite (see \cite{CV}*{Lemma $A.2$})

\begin{proof}
We prove the lemma in the case $m=(1, \cdots,1)$ but the general case is analogous, moreover we show only $\eqref{linkEFm1}$ because then $\eqref{linkEFm2}$ follows noticing that $\tilde{v}\in \Am$. Recalling the expression of $\E$ (see $\eqref{funct}$), we start by computing $\mathcal{K}(v, \tR, \setR \setminus \tR)$.

Proceeding as in \cite{CV}*{Lemma $4.4.1$}, we get
\begin{equation}
\begin{aligned}
\mathscr{K}(v, \tR, \setR \setminus \tR)&=\mathscr{K}(\tilde{v}, \tR, \setR \setminus \tR)+\mathscr{K}(u, \tR, \setR \setminus \tR)\\
&-\mathscr{K}(v, \setR \setminus \tR, \tR)+\int_{\tR} \int_{\setR \setminus \tR}\tilde{\varphi}(x)\tilde{\varphi}(y)\mathscr{K}(x,y)\di x \di y.
\end{aligned}
\end{equation}
Then we note that 
\[
\mathscr{K}(v, \tR, \tR)=\mathscr{K}(\tilde{v}, \tR, \tR)\quad \text{and}\quad \mathscr{P}(v, \tR)=\mathscr{P}(\tilde{v}, \tR)
\]
and recalling the definitions of $\E$ and $\F$ we conclude the proof.
\end{proof}
Now we are ready to prove that the absolute minimizers of $
\Fm$ in $\Am$ are also minimizers for $\E$ with respect to compact perturbations in $\tilde{S}_{\omega,m}^{A,B}$:

\begin{prop}\label{minimizerisminE}
If $u\in\Mm$, then it is a local minimizer of $\E$ in every open set $\Omega \subset \subset \tilde{S}_{\omega,m}^{A,B}$, i.e.\
\begin{equation}\label{minE}
\E(u,\Omega)\le \E(v,\Omega)
\end{equation}
for all $v\equiv u$ in $\setR \setminus \Omega$.
\end{prop}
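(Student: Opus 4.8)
The plan is to deduce the local minimality of $u$ in $\Omega$ from the comparison inequality $\eqref{linkEFm2}$ between $\E$ and the periodic functionals, applied at a very large period, so that the boundary interaction appearing on its right-hand side is forced to vanish in the limit. I begin with two harmless reductions: if $\E(v,\Omega)=+\infty$ there is nothing to prove, so I assume $\E(v,\Omega)<+\infty$; and, replacing $v$ by its truncation to $[-1-\delta_0,1+\delta_0]$ — which leaves $v$ unchanged on $\setR\setminus\Omega$, since there $v=u$ and $u$ already takes values in this interval, and which does not increase $\E(\cdot,\Omega)$, the kinetic part because truncation is $1$-Lipschitz and the potential part by $\eqref{W5}$, exactly as in the proof that $\E$ admits periodic minimizers — I may also assume $v$ bounded. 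Set $\varphi:=v-u$, a bounded function supported in $\overline\Omega$.

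Next I pass to the period $m'=k\,m$, $k\in\mathbb{N}$. Since the lattice of $\sim_{m'}$ is contained in that of $\sim_m$, the $\sim_m$-periodic function $u$ is also $\sim_{m'}$-periodic, so $u\in\mathcal{A}_{\omega,m'}^{A,B}$, and counting the $k^{N-1}$ copies of a fundamental domain of $\sim_m$ inside one of $\sim_{m'}$ gives $\mathcal{F}_{\omega,m'}(u)=k^{N-1}\Fm(u)$. Arguing as in Proposition \ref{minimiuguali} — given a minimizer $w$ of $\mathcal{F}_{\omega,m'}$ in $\mathcal{A}_{\omega,m'}^{A,B}$, the minimum of the $k^{N-1}$ translates of $w$ under the $\sim_m$-lattice is $\sim_m$-periodic, and iterating Lemma \ref{maxmin&funct} together with the minimality of $w$ (using that minima and maxima of admissible functions are admissible) shows that its $\mathcal{F}_{\omega,m'}$-energy is $\le\mathcal{F}_{\omega,m'}(w)$ — one gets $\min_{\mathcal{A}_{\omega,m'}^{A,B}}\mathcal{F}_{\omega,m'}=k^{N-1}\min_{\Am}\Fm$, so that $u\in\Mm$ forces $u\in\mathcal{M}_{\omega,m'}^{A,B}$. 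I then choose $k$ so large that $\overline\Omega$ is compactly contained in a fundamental domain $\tilde{\mathbb{R}}^N_{m'}$ of $\sim_{m'}$, which is possible because $\Omega$ is a fixed bounded set while the lateral width of such a domain grows linearly in $k$; for such $k$ the function $\varphi$ is supported inside $\tilde{\mathbb{R}}^N_{m'}$, and its $\sim_{m'}$-periodic extension $\tilde\varphi_{m'}$ is supported on the $\sim_{m'}$-lattice translates of $\overline\Omega$.

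Applying the analogue of $\eqref{linkEFm2}$ for the period $m'$ to $u\in\mathcal{M}_{\omega,m'}^{A,B}$ and this $v$, I obtain
\[
\E(v,\tilde{\mathbb{R}}^N_{m'})-\E(u,\tilde{\mathbb{R}}^N_{m'})\ \ge\ T_{m'}:=\int_{\tilde{\mathbb{R}}^N_{m'}}\int_{\setR\setminus\tilde{\mathbb{R}}^N_{m'}}\tilde\varphi_{m'}(x)\,\tilde\varphi_{m'}(y)\,K(x,y)\di x\di y .
\]
Since $v-u=\varphi$ is supported in $\Omega\subseteq\tilde{\mathbb{R}}^N_{m'}$, a direct computation — all kinetic and potential contributions not involving $\Omega$ cancel between $u$ and $v$, and $2\mathscr{K}(\cdot;\Omega;\tilde{\mathbb{R}}^N_{m'}\setminus\Omega)+2\mathscr{K}(\cdot;\Omega;\setR\setminus\tilde{\mathbb{R}}^N_{m'})$ recombines into $2\mathscr{K}(\cdot;\Omega;\setR\setminus\Omega)$ — shows that the left-hand side equals the finite quantity $\E(v,\Omega)-\E(u,\Omega)$, which is independent of $k$. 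It then remains to let $k\to\infty$: if $\tilde\varphi_{m'}(x)\ne0$ with $x$ in the chosen fundamental domain then $x\in\overline\Omega$, while $\tilde\varphi_{m'}(y)\ne0$ with $y$ outside forces $y\in\overline\Omega+\sum_i\mu_i m'_i z_i$ for some $\mu\in\mathbb{Z}^{N-1}\setminus\{0\}$, whence $|x-y|\ge c\,k\,|\mu|$ once $k$ is large; using the upper bound in $\eqref{K2}$ (a fortiori $\eqref{K4}$) and summing over $\mu$ — the series $\sum_{\mu\in\mathbb{Z}^{N-1}\setminus\{0\}}|\mu|^{-N-2s}$ converges — gives $|T_{m'}|\le C\,\|\varphi\|_{L^\infty}^2\,|\Omega|^2\,k^{-N-2s}\to0$. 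Passing to the limit in $\E(v,\Omega)-\E(u,\Omega)\ge T_{m'}$ yields $\eqref{minE}$.

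I expect the main obstacle to be the passage to the coarser period: that an arbitrary — not necessarily minimal — element of $\Mm$ remains an absolute minimizer of $\mathcal{F}_{\omega,m'}$ is not literally Proposition \ref{minimiuguali}, and establishing the energy identity $\min\mathcal{F}_{\omega,m'}=k^{N-1}\min\Fm$ requires the min-over-translates / submodularity argument above to be run with some care (in particular, the admissibility of minima and maxima and the iteration of Lemma \ref{maxmin&funct}). By contrast, the decay estimate for $T_{m'}$ is routine once a pointwise bound on $K$ is in hand, and this is the only step where the behaviour of $K$ at infinity matters — in fact $\eqref{K2}$ alone suffices here, so the extra hypothesis $\eqref{K4}$ of this section is not needed for this proposition.
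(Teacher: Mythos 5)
Your argument is correct (at the same level of rigor as the paper's own manipulations of differences of possibly infinite energies), but it follows a genuinely different route. The paper stays at the fixed period $m$: it applies \eqref{linkEFm2} directly, observes that for a one-signed perturbation $\varphi$ the interaction term $\int_{\Rt}\int_{\setR\setminus\Rt}\tilde\varphi(x)\tilde\varphi(y)K(x,y)\di x\di y$ is nonnegative, and reduces a sign-changing $\varphi$ to the one-signed case by writing $\min\{u,u+\varphi\}=u-\varphi_-$, $\max\{u,u+\varphi\}=u+\varphi_+$ and invoking Lemma \ref{maxmin&funct}, finally localizing from $\Rt$ to $\Omega$ via Remark \ref{minimisottoins}. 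You instead never split $\varphi$: you pass to the coarser period $m'=km$, prove that any element of $\Mm$ (not just the minimal minimizer, so this is a genuine extension of Proposition \ref{minimiuguali}, and your min-over-translates/submodularity argument for it is sound, since the minimum of minimizers of $\mathcal{F}_{\omega,m'}$ is again a minimizer by the analogue of Corollary \ref{2elinMmininM} and is $\sim_m$-periodic) remains in $\mathcal{M}_{\omega,m'}^{A,B}$, apply \eqref{linkEFm2} at period $m'$, identify the left-hand side with the $k$-independent quantity $\E(v,\Omega)-\E(u,\Omega)$, and kill the interaction term as $k\to\infty$ using only the upper bound in \eqref{K2} and the convergence of $\sum_{\mu\in\mathbb{Z}^{N-1}\setminus\{0\}}|\mu|^{-N-2s}$. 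What the paper's route buys is brevity and no limiting procedure; what yours buys is that the sign of $\varphi$ becomes irrelevant and the boundary correction is shown to be quantitatively small, which also makes transparent why \eqref{K4} plays no role here. Two small caveats: your existence of an exact minimizer $w$ of $\mathcal{F}_{\omega,m'}$ is justified in this section by \eqref{K4} (or, without it, by running the direct method with the finite-energy competitor $u$ itself, or with near-minimizers), and the identity $\E(v,\tilde{\mathbb{R}}^N_{m'})-\E(u,\tilde{\mathbb{R}}^N_{m'})=\E(v,\Omega)-\E(u,\Omega)$ should be read as a statement about the well-defined localized difference, exactly as in the paper's own \eqref{linkEFm1}; neither affects the validity of the proof.
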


\begin{proof}
Without loss of generality we may suppose that $\E(v,\Omega)<+\infty$ and $|v|\le~1+\delta_0$ a.e.\ in $\setR$. Let $\varphi:=v-u$ and note that spt $\varphi \subset \Omega$. We claim that $\eqref{minE}$ holds with $\Omega$ replaced by $\Rt$, i.e.\
\begin{equation}\label{minE2}
\E(u,\Rt)\le \E(v,\Rt).
\end{equation}
Then Remark \ref{minimisottoins} will implies $\eqref{minE}$. 

To show $\eqref{minE2}$ we observe that if $\varphi$ is either non-negative or non-positive, then $\eqref{minE2}$ is a direct consequence of $\eqref{linkEFm2}$. Moreover, if $\varphi$ is sign-changing, we consider $\min \{u,u+\varphi \}$ and $\max \{u,u+\varphi \}$. From Lemma \ref{maxmin&funct} we get 
\[
\E(\min \{u,u+\varphi \},\Rt)+\E(\max \{u,u+\varphi \},\Rt)\le \E(u, \Rt)+\E(u+\varphi, \Rt).
\]
Moreover, noticing that
\[
\min \{u,u+\varphi \}=u-\varphi_-\quad \text{and}\quad \max \{u,u+\varphi \}=u+\varphi_+
\]
and using $\eqref{linkEFm2}$, we obtain
\begin{equation}
\begin{aligned}
2\E(u, \Rt)&\le \E(u-\varphi_-,\Rt)+\E(u+\varphi_+,\Rt)=\E(\min \{u,u+\varphi \},\Rt)\\
&+\E(\max \{u,u+\varphi \},\Rt)\le \E(u, \Rt)+\E(u+\varphi,\Rt)
\end{aligned}
\end{equation}
that is our thesis.
\end{proof}
As a consequence of this proposition and Subsection \ref{subsection4.3}, we have the following
\begin{coro}\label{uomislocmin}
The minimal minimizer $\uom$ is a local minimizer of $\E$ for every $\Omega \subset \subset S_\omega^{A,B}$.
\end{coro}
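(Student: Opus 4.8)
The plan is to obtain the corollary as an immediate consequence of Proposition \ref{minimizerisminE} combined with the doubling identity $\uomm=\uom$ of Proposition \ref{minimiuguali}. The point is that Proposition \ref{minimizerisminE} already gives local minimality of any element of $\Mm$ with respect to compact perturbations in the \emph{quotient} slab $\tilde{S}_{\omega,m}^{A,B}$; what remains is to see that every open set $\Omega\subset\subset S_\omega^{A,B}$ in the full space can be realized, after passing to a sufficiently coarse period $\sim_m$, as such a compactly contained set.

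Concretely, I would argue as follows. First fix an open $\Omega\subset\subset S_\omega^{A,B}$; by definition $\overline{\Omega}$ is compact, so $\Omega$ is bounded and stays at positive distance from the hyperplanes $\{\omega\cdot x=A\}$ and $\{\omega\cdot x=B\}$. Second, since $\Omega$ is bounded, one can choose $m=(m_1,\dots,m_{N-1})\in\N^{N-1}$ with each $m_i$ large enough that the canonical projection $\setR\to\Rt$ restricts on $\overline{\Omega}$ to an embedding whose image is compactly contained in $\tilde{S}_{\omega,m}^{A,B}$: enlarging the $m_i$ enlarges the fundamental domain of $\sim_m$ along the lattice directions $z_1,\dots,z_{N-1}$, so for $m$ large the bounded set $\overline{\Omega}$ lies in the interior of a single fundamental cell, and the confinement in the $\omega$-direction is the hypothesis $\Omega\subset\subset S_\omega^{A,B}$. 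Third, by Proposition \ref{minimiuguali} we have $\uomm=\uom$, and in particular $\uom\in\Mm$ since $\uomm$ is the minimal element of $\Mm$. Hence Proposition \ref{minimizerisminE} applies with $u=\uom$ and this choice of $m$, giving $\E(\uom,\Omega)\le\E(v,\Omega)$ for every $v\equiv\uom$ on $\setR\setminus\Omega$; as $\Omega$ was an arbitrary open set compactly contained in $S_\omega^{A,B}$, this is exactly the assertion.

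The only delicate point — and the main, though mild, obstacle — is the second step: one must make precise that an arbitrary bounded $\Omega$ can be placed, after coarsening the periodicity to $\sim_m$ for $m$ large, compactly inside the quotient slab $\tilde{S}_{\omega,m}^{A,B}$, so that $\Omega$ may legitimately be used as a set of compact perturbations for $\Fm$. This is a routine periodicity/covering argument, but it is precisely what bridges "compact perturbations in $S_\omega^{A,B}$" and "compact perturbations in $\tilde{S}_{\omega,m}^{A,B}$"; once it is in place, Propositions \ref{minimiuguali} and \ref{minimizerisminE} immediately yield the result.
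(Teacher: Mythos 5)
Your proposal is correct and follows essentially the same route as the paper: choose $m$ large enough that $\Omega\subset\subset\tilde{S}_{\omega,m}^{A,B}$, use Proposition \ref{minimiuguali} to conclude $\uom\in\Mm$, and then apply Proposition \ref{minimizerisminE}. The extra discussion of why a coarse enough period $\sim_m$ accommodates any bounded $\Omega$ is a reasonable elaboration of a step the paper leaves implicit, but it does not change the argument.
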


\begin{proof}
Given $\Omega$, consider $m\in \mathbb{N}^{N-1}$ such that $\Omega \subset \subset \tilde{S}_{\omega,m}^{A,B}$. Thanks to Proposition \ref{minimiuguali}, $\uom$ is the minimal minimizer with respect to $\Mm$ and Proposition \ref{minimizerisminE} implies that $\uom$ is a local minimizer of $\E$ in $\Omega$.
\end{proof}

\subsection{The Birkhoff property}
In this subsection we recall a geometric property of the level sets of the minimal minimizer called the Birkhoff property, or non-self intersection property, representing the fact that the level sets of the minimal minimizers are ordered under translations.

We start giving some useful notation. We define
\[
\tau_z E:= E+z=\{x+z : x\in E \}
\]
the translation of a set $E\subseteq \setR$ with respect to $z\in \setR$ and observe that for a sublevel set (and analogously for a superlevel set)
\[
\tau_z \{u<\theta \}=\{\tau_z u<\theta \}.
\]
\begin{defn}\label{birkhoff}
We say that $E \subseteq \setR$ has the Birkhoff property with respect to a vector $\bar{\omega}\in \setR$ if
\begin{itemize}
\item $\tau_k E \subseteq E$ for any $k\in \mathbb{Z}^N$ such that $\bar{\omega}\cdot k \le 0$, and
\item 
$\tau_k E \supseteq E$ for any $k\in \mathbb{Z}^N$ such that $\bar{\omega}\cdot k \ge 0$.
\end{itemize}
\end{defn}

We call Birkhoff set a set satisfying the Birkhoff property and we recall an useful result on Birkhoff sets (see \cite{CV}*{Proposition $4.5.2$}):
\begin{prop}\label{Birkricito}
Let $E \subseteq \setR$ a Birkhoff set with respect to $\bar{\omega}\in \setR \setminus \{0 \}$ and containing a ball $B_{\sqrt{N}}$ of radius $\sqrt{N}$.
Then $E$ contains a half-space including the center of the ball, is delimited by a hyperplane orthogonal to the vector $\bar{\omega}$ and is such that $\bar{\omega}$ points outside of it.
\end{prop}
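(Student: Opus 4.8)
The plan is to argue by a combination of translation-iteration and a separation argument. First I would exploit the Birkhoff property to propagate the ball $B_{\sqrt N}$ contained in $E$ in the ``favorable'' direction: if $k\in\mathbb Z^N$ has $\bar\omega\cdot k\ge 0$ then $E\subseteq\tau_k E$, so $E$ contains $\tau_k B_{\sqrt N}=B_{\sqrt N}+k$. Iterating, $E$ contains $B_{\sqrt N}+k$ for every $k$ in the sub-semigroup of $\mathbb Z^N$ generated by lattice vectors $k$ with $\bar\omega\cdot k\ge 0$. The geometric point is that $\sqrt N$ is exactly the diameter of a unit cube, so the balls $B_{\sqrt N}+k$, as $k$ ranges over all of $\mathbb Z^N$, already cover $\setR$ (each point of $\setR$ lies in some closed unit cube with integer vertices, hence within distance $\sqrt N/2<\sqrt N$ of a lattice point). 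Therefore the union of the translates $B_{\sqrt N}+k$ over the half-lattice $\{k\in\mathbb Z^N:\bar\omega\cdot k\ge 0\}$ covers the open half-space $\{x:\bar\omega\cdot x>c\}$ for an appropriate constant $c$ depending only on $N$ and $\bar\omega$; one checks that the ``worst'' lattice points needed to reach a given $x$ with $\bar\omega\cdot x$ large can be chosen with $\bar\omega\cdot k\ge 0$. This shows $E$ contains a half-space $\{\bar\omega\cdot x>c\}$ delimited by a hyperplane orthogonal to $\bar\omega$.

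Next I would show that $E$ is exactly (up to the boundary hyperplane) such a half-space, i.e.\ that its complement also contains a half-space on the other side, so that $E$ cannot ``bulge.'' Here I would use the other half of the Birkhoff property: for $k$ with $\bar\omega\cdot k\le 0$ we have $\tau_k E\subseteq E$, equivalently $\setR\setminus E\subseteq \tau_k(\setR\setminus E)$, so the complement propagates in the direction opposite to $\bar\omega$ in the same way. Combining the two propagation statements, $E$ contains $\{\bar\omega\cdot x>c_1\}$ and $\setR\setminus E\supseteq$ some half-space in the opposite direction; the only way these are consistent is that $\partial E$ lies in a slab orthogonal to $\bar\omega$, and a final monotonicity/cleanup argument (translating by lattice vectors parallel to a rational approximation of $\bar\omega$, or directly invoking that $E$ is translation-monotone along $\bar\omega$) pins the boundary down to a single hyperplane orthogonal to $\bar\omega$, with $\bar\omega$ pointing outward. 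That the half-space includes the center of the original ball follows because that center has $\bar\omega\cdot x$ within $\sqrt N$ of the whole ball and the covering argument started from it.

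The main obstacle I expect is the careful bookkeeping in the covering step: one must verify that every point $x$ with $\bar\omega\cdot x$ sufficiently large can be written as (point of $B_{\sqrt N}$) $+\,k$ with $k\in\mathbb Z^N$ \emph{and} $\bar\omega\cdot k\ge 0$ simultaneously — the constraint $\bar\omega\cdot k\ge 0$ is what makes this nontrivial, since a naive nearest-lattice-point choice need not satisfy it. The fix is to first move to such an $x$ by a lattice vector with strictly positive $\bar\omega$-component (possible because $\bar\omega\ne 0$, so some $e_j$ or $-e_j$ has $\bar\omega\cdot(\pm e_j)>0$) and absorb the bounded error into the radius-$\sqrt N$ ball; quantifying ``sufficiently large'' in terms of $N$ and the direction $\bar\omega/|\bar\omega|$ gives the constant $c$. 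Since this is precisely \cite{CV}*{Proposition $4.5.2$}, the cleanest route in the paper is to note that the statement and its proof depend only on $\mathbb Z^N$-translations and the Birkhoff property — not on the kernel or potential — so the argument of \cite{CV} applies verbatim; I would present the above as a sketch and refer to \cite{CV} for the details.
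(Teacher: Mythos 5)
The paper itself offers no proof of this proposition: it is recalled verbatim from \cite{CV}*{Proposition $4.5.2$}, so your closing remark --- that the statement depends only on $\mathbb{Z}^N$-translations and the Birkhoff property and can be imported from \cite{CV} --- is exactly what the paper does. However, your own sketch contains a genuine orientation error in the key step. From $E\subseteq\tau_k E$ (which holds for $\bar{\omega}\cdot k\ge 0$) you cannot conclude $\tau_k B_{\sqrt N}\subseteq E$: an upper inclusion for $E$ gives no lower inclusion. The correct deduction uses the other half of Definition \ref{birkhoff}: for $\bar{\omega}\cdot k\le 0$ one has $\tau_k E\subseteq E$, hence $\tau_k B_{\sqrt N}\subseteq\tau_k E\subseteq E$. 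Consequently the translated balls fill the region where $\bar{\omega}\cdot x$ is \emph{small}, and the half-space one obtains is of the form $\{\bar{\omega}\cdot(x-z_0)\le c\}$ through the center $z_0$ of the ball, i.e.\ with $\bar{\omega}$ pointing outward --- which is what the statement asserts and what its later use in Theorem \ref{Thimp} requires (there the superlevel set is shown to contain $\Pi_-=\{\omega\cdot(x-\bar{z})<0\}$). Your conclusion that $E\supseteq\{\bar{\omega}\cdot x>c\}$ has $\bar{\omega}$ pointing \emph{into} the half-space, the opposite of the claim.

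The second half of your plan is also off target: the proposition does not assert that $E$ is (up to its boundary) a half-space, only that it \emph{contains} one delimited by a hyperplane orthogonal to $\bar{\omega}$; the separation claim that the complement must contain an opposite half-space is false in general (for instance $E=\setR$ is Birkhoff and contains every ball, yet has empty complement), so that step should simply be dropped. The covering bookkeeping you flag --- reaching every point of the half-space through the center by lattice vectors $k$ with the admissible sign of $\bar{\omega}\cdot k$, using that the radius $\sqrt N$ exceeds the half-diagonal $\sqrt N/2$ of a unit cube --- is the real content of the lemma and is carried out in \cite{CV}; with the orientation corrected, deferring to that proposition, as the paper does, is sufficient.
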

Now proceeding as in 	\cite{CV}*{Proposition $4.5.3$}, we want to show that level sets of the minimal minimizer are Birkhoff sets.
\begin{prop}\label{Birksubsuplev}
Given $\theta \in \mathbb{R}$, the superlevel set $\{\uom>\theta \}$ has the Birkhoff property with respect to $\omega$, i.e.\
\begin{itemize}
\item $\{\tau_k \uom >\theta \}\subseteq \{\uom> \theta \}$, for any $k\in \mathbb{Z}^N$ such that $\omega \cdot k \le 0$, and
\item $\{\tau_k \uom >\theta \}\supseteq \{\uom> \theta \}$, for any $k\in \mathbb{Z}^N$ such that $\omega \cdot k \ge 0$. 
\end{itemize}
In the same way the sublevel set $ \{\uom< \theta \}$ has the Birkhoff property with respect to $-\omega$.

The Proposition still holds if strict levels are replaced by the broad ones.
\end{prop}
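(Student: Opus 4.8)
The plan is to argue by contradiction, exploiting the minimality property of $\uom$ established in Corollary \ref{uomislocmin} together with the fact that translates of $\uom$ by lattice vectors $k\in\mathbb{Z}^N$ with $\omega\cdot k=0$ are again minimal minimizers, while translates by $k$ with $\omega\cdot k\ne 0$ are minimizers in a shifted strip. Concretely, fix $k\in\mathbb{Z}^N$ with $\omega\cdot k\le 0$ and set $v:=\tau_k\uom$. Since $\tau_k$ shifts the strip $S_\omega^{A,B}$ to $S_\omega^{A+\omega\cdot k,\,B+\omega\cdot k}$ with $A+\omega\cdot k\le A$ and $B+\omega\cdot k\le B$, one checks that $\min\{\uom,v\}\in\A$ and $\max\{\uom,v\}\in\mathcal{A}_\omega^{A+\omega\cdot k,\,B+\omega\cdot k}$: indeed where $\omega\cdot x\le A$ both functions are $\ge 1-\delta_0$, and where $\omega\cdot x\ge B$ both are $\le -1+\delta_0$. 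Then I would invoke Lemma \ref{maxmin&funct} in the renormalized form (applied to the functional $\F$, exactly as in Lemma \ref{mininM}) to get
\[
\F(\min\{\uom,v\})+\F(\max\{\uom,v\})\le \F(\uom)+\F(v).
\]
Because $v$ is a minimizer in its own (shifted) class and $\max\{\uom,v\}$ lies in that class, $\F(v)\le\F(\max\{\uom,v\})$, hence $\F(\min\{\uom,v\})\le\F(\uom)$, so $\min\{\uom,v\}\in\M$. By the defining minimality property \eqref{1minmimnimizer} of the minimal minimizer, $\uom\le\min\{\uom,v\}$, i.e. $\uom\le v=\tau_k\uom$ everywhere. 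Passing to superlevel sets gives $\{\uom>\theta\}\subseteq\{\tau_k\uom>\theta\}$, which is precisely the first bullet (after recalling $\tau_k\{u>\theta\}=\{\tau_k u>\theta\}$). The case $\omega\cdot k\ge 0$ is symmetric: replace $k$ by $-k$, or equivalently compare $\uom$ with $\tau_{-k}\uom$ and reverse the roles of $\min$ and $\max$, giving $\tau_k\uom\le\uom$ and hence $\{\tau_k\uom>\theta\}\supseteq\{\uom>\theta\}$.

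For the sublevel sets, I would note that $\{\uom<\theta\}=\setR\setminus\{\uom\ge\theta\}$ and that the pointwise inequalities $\uom\le\tau_k\uom$ (for $\omega\cdot k\le 0$) translate into $\tau_k\{\uom<\theta\}\subseteq\{\uom<\theta\}$ and $\tau_k\{\uom<\theta\}\supseteq\{\uom<\theta\}$ with the direction of the inequality on $\omega\cdot k$ reversed — which is exactly the Birkhoff property with respect to $-\omega$. The final sentence of the statement (that broad levels $\{\uom\ge\theta\}$ work too) follows from the same pointwise monotonicity, since $\uom\le\tau_k\uom$ also forces $\{\uom\ge\theta\}\subseteq\{\tau_k\uom\ge\theta\}$; no separate argument is needed.

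The main subtlety — and the step I would write out most carefully — is verifying that $\min\{\uom,v\}$ and $\max\{\uom,v\}$ genuinely land in the correct admissible classes and that the cut-and-paste in Lemma \ref{maxmin&funct} applies in the renormalized setting with the volume term $Hu$; this is where the asymmetry $A+\omega\cdot k\le A$, $B+\omega\cdot k\le B$ is used and where one must be sure the boundary constraints are preserved rather than merely "almost" preserved. A secondary point is that one should first reduce to the case $\omega\cdot k\ne 0$ is not needed — when $\omega\cdot k=0$ the translate $\tau_k\uom$ already belongs to $\M$ itself, so $\min\{\uom,\tau_k\uom\}\in\M$ by Corollary \ref{2elinMmininM} directly and the conclusion $\uom\le\tau_k\uom$ (hence also $\ge$, by applying the same to $-k$) gives $\tau_k\uom=\uom$, consistent with but stronger than the Birkhoff inclusions. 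Everything else is a routine consequence of the minimal-minimizer machinery already developed in Subsections \ref{minmin}--\ref{subsection4.3}.
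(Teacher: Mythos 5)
Your overall strategy (cut--and--paste via Lemma \ref{maxmin&funct}, i.e.\ Lemma \ref{mininM}, combined with the minimal--minimizer machinery) is the paper's, but the key step is carried out with the admissibility classes swapped, and this reverses all the inequalities. For $\omega\cdot k\le 0$ one has $\tau_k\uom\in\mathcal{A}_\omega^{A+\omega\cdot k,B+\omega\cdot k}$, and it is $\min\{\uom,\tau_k\uom\}$ that belongs to the \emph{shifted} class $\mathcal{A}_\omega^{A+\omega\cdot k,B+\omega\cdot k}$, while $\max\{\uom,\tau_k\uom\}$ belongs to $\A$ --- not the other way around as you claim. Indeed, your assertion ``where $\omega\cdot x\le A$ both functions are $\ge 1-\delta_0$'' fails on the slab $\{A+\omega\cdot k<\omega\cdot x\le A\}$, where $\tau_k\uom$ is no longer constrained from below (and is even $\le -1+\delta_0$ on part of it when $\omega\cdot k< A-B$); symmetrically, $\max\{\uom,\tau_k\uom\}$ need not be $\le-1+\delta_0$ on $\{B+\omega\cdot k\le\omega\cdot x< B\}$, so it does not lie in the shifted class. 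Consequently the comparison ``$\F(\tau_k\uom)\le\F(\max)$ because $\max$ lies in the shifted class'' is not justified. The correct chain is: $\F(\uom)\le\F(\max\{\uom,\tau_k\uom\})$ since $\max\in\A$ and $\uom\in\M$; together with Lemma \ref{maxmin&funct} this gives $\F(\min\{\uom,\tau_k\uom\})\le\F(\tau_k\uom)$, i.e.\ $\min\{\uom,\tau_k\uom\}\in\mathcal{M}_\omega^{A+\omega\cdot k,B+\omega\cdot k}$ (this is exactly Lemma \ref{mininM} with $u=\tau_k\uom$ and $v=\uom$); finally one uses that $\tau_k\uom$ is the \emph{minimal} minimizer of the shifted class to conclude $\tau_k\uom\le\min\{\uom,\tau_k\uom\}\le\uom$.

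Because of this swap your pointwise conclusions are backwards: for $\omega\cdot k\le 0$ the true ordering is $\tau_k\uom\le\uom$, not $\uom\le\tau_k\uom$ (the latter is outright false when $\omega\cdot k<A-B$, since then $\uom\ge 1-\delta_0>-1+\delta_0\ge\tau_k\uom$ on the slab $\{B+\omega\cdot k\le\omega\cdot x\le A\}$), and the inclusion you derive, $\{\uom>\theta\}\subseteq\{\tau_k\uom>\theta\}$, is the reverse of the first bullet, not ``precisely'' it. In the case $\omega\cdot k\ge 0$ you also pass from the pointwise bound $\tau_k\uom\le\uom$ to $\{\tau_k\uom>\theta\}\supseteq\{\uom>\theta\}$, which is the wrong implication between a pointwise inequality and superlevel sets (it gives $\subseteq$). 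Your remarks on $\omega\cdot k=0$, on sublevel sets, and on broad levels are fine, but only once the correct orderings ($\tau_k\uom\le\uom$ for $\omega\cdot k\le 0$ and $\uom\le\tau_k\uom$ for $\omega\cdot k\ge 0$) are in place; as written, the central step does not prove the stated Birkhoff inclusions.
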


\begin{proof}
Denote with $v:=\min \{\uom, \tau_k \uom \}$ and note that $\tau_k \uom$ is the minimal minimizer with respect to $\tau_k S_\omega^{A,B}=S_\omega^{A+\omega \cdot k,B+\omega \cdot k}$. Now, if $\omega \cdot k \le 0$, from Lemma \ref{mininM} we have that $v\in M_\omega^{A+\omega \cdot k,B+\omega \cdot k}$, so that $\tau_k \uom \le v \le \uom$. Therefore
\[
\{\tau_k \uom >\theta \}\subseteq \{\uom>\theta \}.
\]
Similarly, if $\omega \cdot k \ge 0$ we get that $v\in \M$ and hence
\[
\{\uom >\theta \}\subseteq \{\tau_k \uom > \theta \}.
\]
For the conclusion concerning the sublevel set $\{\uom \le \theta \}$ and the superlevel set $\{\uom \ge ~\theta \}$ we can reason as in \cite{CV}*{Proposition $4.5.3$}.
\end{proof}

\subsection{Unconstrained and class $A$-minimization}
From now on we consider strips of the form
\[
S_\omega^M:=S_\omega^{0,M}=\{x\in \setR : \omega \cdot x \in [0,M]\}.
\]
We denote the space of admissible functions $\mathcal{A}_\omega^{0,M}$ with $\mathcal{A}_\omega^M$, the absolute minimizers with $\mathcal{M}_\omega^M$ and the minimal minimizer with $u_\omega^M$. Since we want to avoid narrow strips, we assume $M>10|\omega|$.

The goal of this subsection is to show that, for large universal values of $M/|\omega|$, the minimal minimizer $u_\omega^M$ becomes unconstrained, i.e.\ it no longer feels boundary data prescribed outside $S_\omega^M$, gaining additional minimizing properties in the whole $\setR$.
First of all we adapt the results of Section \ref{sec2} and Section \ref{sec3} to the minimal minimizer $u_\omega^M$ and in view of Corollary~ \ref{uomislocmin}, we have the existence of universal quantities $\alpha \in (0,1)$ and $C_1 \ge 1$ such that for any open $S\subset \subset S_\omega^M$ with $\dist(S, \partial S_\omega^M)\ge 1$ it holds
\begin{equation}\label{regminmin}
\|u_\omega^M\|_{C^{0,\alpha}(S)}\le C_1.
\end{equation}
Then from Proposition \ref{stimedensita}, fixed $x_0\in S_\omega^M$ and $R\ge 3$ such that $B_{R+2}(x_0)\subset \subset S_\omega^M$ we get that
\begin{equation}\label{stimeenergia}
\E(u_\omega^M,B_R(x_0))\le C_2 R^{N-1}\Psi_R(R)
\end{equation}
where $C_2>0$ is a universal constant and $\Psi_R(R)$ is defined in $\eqref{psidef}$. 

These two inequalities have a crucial role to show the main result of this section:
\begin{thm}\label{Thimp}
There exists a universal constant $M_0>0$ such that if $M\ge M_0|\omega|$, the distance between the superlevel set $\{u_\omega^M >-1+\delta_0 \}$ and the upper constraint $\{ \omega \cdot x=M \}$ delimiting $S_\omega^M$ is at least $1$.
\end{thm}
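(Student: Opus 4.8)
The plan is to argue by contradiction, playing the energy estimate \eqref{stimeenergia} against a lower bound for the energy that would be forced by a transition layer sitting too close to the constraint $\{\omega\cdot x=M\}$. Suppose there is a point $x_0\in S_\omega^M$ with $u_\omega^M(x_0)>-1+\delta_0$ and $\dist(x_0,\{\omega\cdot x=M\})<1$. First I would exploit the Birkhoff property of the superlevel set $\{u_\omega^M>-1+\delta_0\}$ (Proposition \ref{Birksubsuplev}) together with Proposition \ref{Birkricito}: if that superlevel set contained a ball of radius $\sqrt N$ then it would contain a whole half-space with boundary hyperplane orthogonal to $\omega$, and this half-space would have to reach up to the constraint $\{\omega\cdot x=M\}$ (indeed beyond, by the admissibility requirement $u_\omega^M\le -1+\delta_0$ for $\omega\cdot x\ge M$), which is impossible once $M/|\omega|$ is large. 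Hence for $M\ge M_0|\omega|$ the set $\{u_\omega^M>-1+\delta_0\}$ cannot contain a ball of radius $\sqrt N$ anywhere near the top of the strip; in particular in a suitable ball $B_{R}(\bar x)$ of universal radius centered near $x_0$ but at controlled distance from $\partial S_\omega^M$, the superlevel set $\{u_\omega^M>-1+\delta_0\}$ occupies only a fraction of the volume, while by continuity and $u_\omega^M(x_0)>-1+\delta_0$ it is not empty.

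The second step is the density/energy contradiction. Using the Hölder bound \eqref{regminmin} I would show that, having a point where $u_\omega^M>-1+\delta_0$ and (by the Birkhoff argument) a nearby point where $u_\omega^M$ is close to $-1$, the function must perform a genuine oscillation of size comparable to $\delta_0$ across a universal length scale. Combining this with \eqref{W2} — which forces $W(x,u_\omega^M(x))\ge\gamma(\theta)>0$ wherever $|u_\omega^M|\le\theta$ for appropriate $\theta$ bounded away from $1$ — yields a lower bound $\E(u_\omega^M,B_\rho(\bar x))\ge c_0>0$ on a ball of universal radius $\rho$. By periodicity in the $\sim$-directions and the fact that a transition layer hugging $\{\omega\cdot x=M\}$ would, by the Birkhoff ordering, persist along a whole $(N-1)$-dimensional slab, one can place $\asymp R^{N-1}$ such disjoint balls inside $B_{R+2}(x_0')$ for an appropriate recentered $x_0'$ with $B_{R+2}(x_0')\subset\subset S_\omega^M$; each contributes at least $c_0$ to the energy. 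This gives $\E(u_\omega^M,B_R(x_0'))\ge c_0' R^{N-1}$, whereas $\Psi_s(R)\to 0$ (or stays bounded, or grows only like $\log R$) relative to the needed scaling — more precisely, if $s>1/2$ one contradicts \eqref{stimeenergia} directly for large $R$ is not available, so instead I would run the argument at a fixed universal $R$ and compare the constant $c_0'$, forced to exceed $C_2\Psi_s(R)$, against the independence of $c_0'$ from $M$: the point is that one can take $R$ comparable to $M/|\omega|$, and then $R^{N-1}\Psi_s(R)$ grows strictly slower in $M$ than the volume term $R^{N-1}$ when $s\ge 1/2$, and for $s<1/2$ one uses that the oscillation region is a slab of full cross-section so the energy lower bound is also $\asymp R^{N-1}$ with no $\Psi_s$ loss — the mismatch $R^{N-1}$ versus $R^{N-1}\Psi_s(R)$ is the wrong way only if $\Psi_s(R)\ge c$, which fails exactly when $s<1/2$, so in that regime I instead localize to a single universal ball and use that the constraint forces a transition in \emph{every} copy, i.e. I use the renormalized functional $\F$ being bounded by a universal constant independent of $M$ rather than $\E$ on a growing ball.

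Let me restate the cleaner version of step two that I would actually write: since $\uom$ (here $u_\omega^M$) is a minimal minimizer, $\F(u_\omega^M)$ is bounded by $\F(\bar u)$, a universal constant independent of $M$ (by Theorem \ref{Fwisfinite} the competitor $\bar u$ has energy depending only on structural constants and on $B-A=M$ — so actually one must be slightly careful, but the kinetic contribution of $\bar u$ scales like $M^{1-2s}$ for $s<1/2$ and like $1$ for $s\ge 1/2$, all controlled). On the other hand, if the superlevel set $\{u_\omega^M>-1+\delta_0\}$ reaches within distance $1$ of $\{\omega\cdot x=M\}$, the Birkhoff property propagates this across a full fundamental domain in the $\sim$-directions, producing a transition layer of energy density bounded below by a universal $c_0>0$ over a region of measure $\gtrsim 1$ sitting near $\{\omega\cdot x=M\}$; but one can then \emph{lower} the energy by replacing $u_\omega^M$ there with $u_-$ — using \eqref{W6}, \eqref{stimamin} and Lemma \ref{maxmin&funct} to check this truncation does not increase $\F$ — contradicting minimality (or, more carefully, contradicting the minimal-minimizer property via an ordering argument as in Proposition \ref{Birksubsuplev}). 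The main obstacle, and the step I would spend the most care on, is making the geometric-to-energetic passage quantitative and \emph{uniform in $M$}: one must choose the scale $R$ and the constant $M_0$ so that the universal energy lower bound from the transition layer strictly exceeds the (at worst slowly growing in $M$) upper bound coming from \eqref{stimeenergia} and from $\F(\bar u)$; this is exactly where the Birkhoff rigidity of Proposition \ref{Birkricito} is indispensable, because it converts "the level set touches the top constraint" into "the level set contains a half-space", which is the clean false statement one wants.
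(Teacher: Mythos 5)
Your proposal has a genuine gap, and it is exactly at the point you yourself flag as delicate: the quantitative, $M$-uniform energy comparison. The lower bound you produce from a transition layer hugging $\{\omega\cdot x=M\}$ lives on $\asymp R^{N-1}$ disjoint balls, hence is of order $R^{N-1}$, while the upper bound \eqref{stimeenergia} is $C_2R^{N-1}\Psi_s(R)$ with $\Psi_s\ge 1$; these never collide, for any $s$, so no contradiction follows. Your fallback, bounding $\F(u_\omega^M)$ by $\F(\bar u)$ and calling this universal, is false: you only track the kinetic part of $\bar u$, but the linear interpolation of Theorem \ref{Fwisfinite} stays in the well region $\{|r|\le 1-\delta_0\}$ on a slab of thickness comparable to $M/|\omega|$, so by \eqref{W2} the potential part of $\F(\bar u)$ grows linearly in $M$ (and $\F$ is computed per fundamental domain of $\sim_\omega$, whose cross-section depends on $\omega$), so no bound independent of $M$ and $\omega$ is available this way. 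The paper's proof gets the needed one-dimension gap differently: it assumes that \emph{every} ball of radius $\sqrt N$ inside a central cube of side $\sim k$ ($k\approx M/(4N|\omega|)$) contains a point with $|u_\omega^M|<1-\delta_0$; the H\"older bound \eqref{regminmin} then spreads each such point to a universal ball where $|u_\omega^M|<1-\delta_1$, and \eqref{W2} gives potential energy $\gtrsim k^N$ in $B_{Nk}$, against the upper bound $\lesssim k^{N-1}\Psi_s(k)$. This volume-versus-surface mismatch is what forces $M/|\omega|\le M_0$ and produces a ball $B_{\sqrt N}(\bar z)$, well inside the strip, on which $u_\omega^M$ is entirely in one pure phase --- an object your argument never constructs.

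There is also a misapplication of the Birkhoff rigidity. For the superlevel set $\{u_\omega^M>-1+\delta_0\}$, which is Birkhoff with respect to $\omega$, Proposition \ref{Birkricito} yields a half-space on which $\omega$ points \emph{outward}, i.e.\ a half-space extending in the $-\omega$ direction, away from $\{\omega\cdot x=M\}$; containing such a half-space is perfectly compatible with the admissibility constraint $u_\omega^M\le-1+\delta_0$ on $\{\omega\cdot x\ge M\}$, so your claim that a $\sqrt N$-ball of the superlevel set near the top is ``impossible'' does not follow, and the first step of your plan collapses. The actual mechanism in the paper is the sliding argument you only gesture at: if the pure-phase ball were in $\{u_\omega^M\ge 1-\delta_0\}$, Birkhoff plus Proposition \ref{Birkricito} put a half-space of that set at distance at least $1$ from the \emph{lower} constraint, so $\tau_{-e_1}u_\omega^M$ is still admissible, has the same energy by \eqref{K3}, \eqref{W4}, \eqref{Hperiodica}, hence lies in $\mathcal M_\omega^M$, and minimality of the minimal minimizer gives $u_\omega^M(\cdot+e_1)\ge u_\omega^M$; iterating forces $u_\omega^M\ge 1-\delta_0$ a.e., contradicting the upper constraint. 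Therefore the ball lies in $\{u_\omega^M\le-1+\delta_0\}$, and applying Birkhoff and Proposition \ref{Birkricito} to this sublevel set (with respect to $-\omega$) places an \emph{upper} half-space inside it, which is what yields the distance bound. Finally, note that your local oscillation estimate near the top also relies on \eqref{regminmin}, which is only available at distance at least $1$ from $\partial S_\omega^M$, so it cannot be invoked in the very region where your contradiction point sits.
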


\begin{proof}
First of all we point out that during this proof we will denote balls $B$ and cubes $Q$ without expliciting the center. Then we claim that
\begin{equation}\label{claim}
\begin{aligned}
&\exists\; M_0\ge 8N\text{ universal constant}\; \text{ such that, for any}\; M\ge M_0|\omega|,\text{ we find a ball}\\
&B_{\sqrt{N}}(\bar{z})\subset \subset S_\omega^M\text{for some}\; \bar{z}\in S_\omega^M \text{ on which either}\; u_\omega^M\ge 1-\delta_0\text{ or}\; u_\omega^M \le-1+\delta_0.
\end{aligned}
\end{equation}
Given $M\ge 8N|\omega|$, assume that for every ball $\tilde{B}_{\sqrt{N}}\subset \subset S_\omega^M$ we can find a point $\tilde{x}\in \tilde{B}_{\sqrt{N}}$ such that $|u_\omega^M(\tilde{x})|<1-\delta_0$. If we prove that $M/|\omega| \le M_0$, then claim $\eqref{claim}$ follows.

Proceeding as in \cite{CV}*{Proposition $4.6.1$} we take $k\ge 2$ the only integer such that
\begin{equation}\label{K}
k\le \frac{M}{4N|\omega|}<k+1.
\end{equation}
Then we let $x_0 \in S_\omega^M$ be a point on the hyperplane $\{\omega \cdot x=\frac{M}{2} \}$ and $B=B_{nk}(x_0)$. Thanks to $\eqref{K}$ we get that $B \subset \subset S_\omega^M$ with
\begin{equation}\label{distpos}
\dist (B, \partial S_\omega^M)=\frac{M}{2|\omega|}-Nk\ge Nk\ge 4.
\end{equation}
Therefore we can apply $\eqref{regminmin}$ to obtain
\begin{equation}\label{primaclaim}
\|u_\omega^M\|_{C^{0,\alpha}(B)}\le C_1.
\end{equation}
Now we consider $Q$ a cube with center in $x_0$ and side $2\sqrt{N}k$. Clearly $Q\subset B$ and we can partition it (up a negligible set) into a collection $\{Q_j\}_{j=1}^{k^N}$ of cubes with sides $2\sqrt{N}$ parallel to those of $Q$. Then we call $B_j \subset Q_j$ the ball of radius $\sqrt{N}$ with the same center of $Q_j$.

 By our initial assumption, for every $j=1,\cdots, k^N$, there exists $\tilde{x}_j \in B_j$ such that $|u_\omega^M(\tilde{x}_j)|< 1-\delta_0$.

We claim that 
\begin{equation}\label{abbassodelta0}
|u_\omega^M|<1-\delta_1\quad \text{in}\; B_{r_0}(\tilde{x}_j)
\end{equation}
for some $\delta_1<\delta_0$ and some universal radius $r_0\in (0,1)$. Indeed, defining $r_0:=\Big( \frac{\delta_0-\delta_1}{C_1}\Big)^{1/\alpha}$, by $\eqref{primaclaim}$, we have
\[
|u_\omega^M|\le |u_\omega^M (\tilde{x}_j)|+C_1|x-\tilde{x}_j|^\alpha<1-\delta_0+C_1r_0^\alpha=1-\delta_1,
\]
for any $x\in B_{r_0}(\tilde{x}_j)$ and the claim is proved. 

On the other hand, since $\tilde{x}_j \in B_j\subset Q_j$, we get
\begin{equation}\label{secclaim}
|B_{r_0}(\tilde{x}_j)\cap Q_j|\ge \frac{1}{2^N}|B_{r_0}(\tilde{x}_j)|=\frac{\alpha_N}{2^N}r_0^N.
\end{equation}
Therefore, from $\eqref{abbassodelta0}$, $\eqref{secclaim}$ and $\eqref{W2}$ we obtain
\begin{equation}
\begin{aligned}
\mathscr{P}(u_\omega^M,B)&\ge \mathscr{P}(u_\omega^M,Q)=\sum_{j=1}^{k^N}{\mathscr{P}(u_\omega^M,Q_j)}\ge \sum_{j=1}^{k^N}{\mathscr{P}(u_\omega^M,B_{r_0}(\tilde{x}_j)\cap Q_j})\\
&=\sum_{j=1}^{k^N}{\int_{B_{r_0}(\tilde{x}_j)\cap Q_j}\Big( W(x,u_\omega^M(x))+H(x)u_\omega^M(x)\Big)\di x}\\
&\ge \Big[ \gamma (1-\delta_1)-\eta(\delta_1-1)\Big]\sum_{j=1}^{k^N}{|B_{r_0}(\tilde{x}_j)\cap Q_j|} \\
& \ge \Big[ \gamma (1-\delta_1)-\eta(\delta_1-1)\Big]\frac{\alpha_N}{2^N}r_0^N k^N:=C_3k^N,
\end{aligned}
\end{equation}
where $C_3>0$ is a universal constant.
Moreover from $\eqref{stimeenergia}$ (that we can apply to $B$ thanks to $\eqref{distpos}$)
\[
\mathscr{P}(u_\omega^M,B)\le \E(u_\omega^M,B)\le C_2(Nk)^{N-1}\Psi_s(Nk)\le C_4 k^{N-1}\Psi_s(k),
\]
for some universal $C_4>0$. By $\eqref{K}$, the same holds for $M/|\omega|$, so $\eqref{claim}$ follows.

Thus it remains to show that $u_\omega^M$ cannot be greater or equal to $1-\delta_0$ on $B_{\sqrt{N}}(\bar{z})$.

Assume by contradiction that
\begin{equation}\label{hpass}
u_\omega^M\ge 1-\delta_0\quad \text{in}\; B_{\sqrt{N}}(\bar{z}).
\end{equation}
Using Proposition \ref{Birksubsuplev} we have that the set $\{u_\omega^M\ge 1-\delta_0\}$ has the Birkhoff property with respect to $\omega$. Then, $\eqref{hpass}$ and Proposition \ref{Birkricito} imply that the superlevel set contains the half-space $\Pi_-:=\{\omega \cdot (x-\bar{z})<0 \}$. Since $B_{\sqrt{N}}(\bar{z})\subset S_\omega^M$, we can affirm that $\partial \Pi_-$ is at least at distance $1$ from the level constraint $\{\omega \cdot x=0\}$. As a consequence if we suppose w.l.o.g.\ that $\omega_1>0$, the translation $\tau_{-e_1}u_\omega^M \in \mathcal{A}_\omega^M$. In view of the periodicity assumptions $\eqref{K3}$, $\eqref{W4}$ and $\eqref{Hperiodica}$, we get that $\mathcal{F}_\omega(\tau_{-e_1}u_\omega^M)=\mathcal{F}_\omega(u_\omega^M)$ and so $\tau_{-e_1}u_\omega^M\in \mathcal{M}_\omega^M$. Then, since $u_\omega^M$ is the minimal minimizer, it results
\[
u_\omega^M(x+e_1)=\tau_{-e_1}u_\omega^M\ge u_\omega^M(x)\quad \text{for a.e.}\; x\in \setR.
\]
Now, iterating this inequality we obtain
\[
u_\omega^M(x+te_1)\ge u_\omega^M(x)\ge1-\delta_0\quad \text{for a.e.}\; x\in \Pi_-\;\text{and}\; t\in \N
\]
or equivalently $u_\omega^M\ge 1-\delta_0$ a.e.\ in $\setR$ that contradicts the fact that $u_\omega^M\le-1+\delta_0$ in $\{ \omega \cdot x\ge M\}$ by construction. As a consequence $u_\omega^M\le-1+\delta_0$ on the ball $B_{\sqrt{N}}(\bar{z})$, hence applying again Proposition \ref{Birksubsuplev} and Proposition \ref{Birkricito} to the sublevel set $\{u_\omega^M \le-1+\delta_0 \}$, we prove the theorem.
\end{proof}

\begin{coro}\label{coroupper}
If $M\ge M_0|\omega|$, then $u_\omega^M=u_\omega^{M+a}$ for all $a\ge 0$.
\end{coro}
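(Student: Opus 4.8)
The strategy is to show that, once $M\ge M_0|\omega|$, the two constrained problems defining $u_\omega^M$ and $u_\omega^{M+a}$ have exactly the same set of absolute minimizers, so that the uniqueness of the minimal minimizer forces the two distinguished functions to coincide. It suffices to do this for $0\le a\le|\omega|$: for larger $a$ one writes $a=n|\omega|+r$ with $0\le r<|\omega|$ and iterates the result with base widths $M,\,M+|\omega|,\dots,M+n|\omega|$, each of which is still $\ge M_0|\omega|$, hence admissible.

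Fix then $0\le a\le|\omega|$. One inclusion of admissible classes is immediate: enlarging the strip only relaxes the constraint on $\{\omega\cdot x\ge M\}$, so $\mathcal{A}_\omega^M\subseteq\mathcal{A}_\omega^{M+a}$ and in particular $u_\omega^M\in\mathcal{A}_\omega^{M+a}$. The crucial step is the reverse membership $u_\omega^{M+a}\in\mathcal{A}_\omega^M$. Apply Theorem \ref{Thimp} with strip width $M+a$ (legitimate, since $M+a\ge M_0|\omega|$): the superlevel set $\{u_\omega^{M+a}>-1+\delta_0\}$ sits at distance at least $1$ from the hyperplane $\{\omega\cdot x=M+a\}$; since $u_\omega^{M+a}\le-1+\delta_0$ on $\{\omega\cdot x\ge M+a\}$ by admissibility, this superlevel set lies in $\{\omega\cdot x\le M+a-|\omega|\}\subseteq\{\omega\cdot x\le M\}$, the last inclusion using $a\le|\omega|$. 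Hence $u_\omega^{M+a}\le-1+\delta_0$ a.e.\ on $\{\omega\cdot x\ge M\}$, i.e.\ $u_\omega^{M+a}\in\mathcal{A}_\omega^M$.

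Now the variational characterizations chain together. Since $u_\omega^{M+a}$ minimizes $\F$ over $\mathcal{A}_\omega^{M+a}\supseteq\mathcal{A}_\omega^M$ and belongs to $\mathcal{A}_\omega^M$, it also minimizes $\F$ over $\mathcal{A}_\omega^M$; comparing with $u_\omega^M$ (itself an absolute minimizer over $\mathcal{A}_\omega^M$) yields $\F(u_\omega^M)=\F(u_\omega^{M+a})=\min_{\mathcal{A}_\omega^M}\F=\min_{\mathcal{A}_\omega^{M+a}}\F$, so that $u_\omega^M\in\mathcal{M}_\omega^{M+a}$ and $u_\omega^{M+a}\in\mathcal{M}_\omega^M$. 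Finally, the defining property $\eqref{1minmimnimizer}$ of the minimal minimizer in each class gives $u_\omega^M\le u_\omega^{M+a}$ (as $u_\omega^{M+a}\in\mathcal{M}_\omega^M$) and $u_\omega^{M+a}\le u_\omega^M$ (as $u_\omega^M\in\mathcal{M}_\omega^{M+a}$), whence $u_\omega^M=u_\omega^{M+a}$, and the iteration completes the case $a>|\omega|$.

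I expect the only genuinely non-formal point to be the one highlighted above, namely turning Theorem \ref{Thimp} into the statement that $u_\omega^{M+a}$ still obeys the narrower constraint of $\mathcal{A}_\omega^M$; it is exactly here that the size restriction $a\le|\omega|$ (and hence the need for the iteration) enters. Everything else is bookkeeping with the notions of absolute and minimal minimizer together with the monotonicity $\mathcal{A}_\omega^M\subseteq\mathcal{A}_\omega^{M+a}$ of the admissible classes; in particular one could alternatively get the easy inequality $u_\omega^M\le u_\omega^{M+a}$ for free (with no appeal to Theorem \ref{Thimp}) from the submodularity Lemma \ref{maxmin&funct} applied to $\min\{u_\omega^M,u_\omega^{M+a}\}$ and $\max\{u_\omega^M,u_\omega^{M+a}\}$, the whole point of Theorem \ref{Thimp} being the reverse inequality.
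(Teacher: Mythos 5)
Your proposal is correct and follows essentially the same route as the paper: apply Theorem \ref{Thimp} to $u_\omega^{M+a}$ for a small increment to conclude $u_\omega^{M+a}\in\mathcal{A}_\omega^{M}$, combine with the trivial membership $u_\omega^{M}\in\mathcal{A}_\omega^{M+a}$ to place both functions in $\mathcal{M}_\omega^{M}\cap\mathcal{M}_\omega^{M+a}$, use the ordering property \eqref{1minmimnimizer} of the minimal minimizer to force equality, and iterate for large $a$. The only cosmetic differences are that you take increments of size $|\omega|$ where the paper takes $a\in[0,1]$ (your choice is in fact the safer normalization of the distance-one conclusion of Theorem \ref{Thimp}), and you spell out explicitly the two-sided comparison that the paper compresses into ``hence they are the same function.''
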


\begin{proof}
Given $M\ge M_0|\omega|$ and $a\in [0,1]$, we may apply Theorem \ref{Thimp} to the minimal minimizer $u_\omega^{M+a}$ to obtain that $u_\omega^{M+a}\le-1+\delta_0$ a.e.\ in the half-space $\{\omega \cdot x\ge M \}$. But then $u_\omega^{M+a}\in \mathcal{A}_\omega^M$ and by minimality of $u_\omega^M$, we get that $\mathcal{F}_\omega(u_\omega^M)\le \mathcal{F}_\omega(u_\omega^{M+a})$.

On the other hand, obviously $u_\omega^M \in \mathcal{A}_\omega^{M+a}$, so $\mathcal{F}_\omega(u_\omega^{M+a})\le  \mathcal{F}_\omega(u_\omega^M)$. Therefore $u_\omega^M$ and $u_\omega^{M+a}$ belong to $\mathcal{M}_\omega^M \cap \mathcal{M}_\omega^{M+a}$ and hence they are the same function. 
Iterating this argument we can extend this result to any $a\ge 0$.
\end{proof}

Roughly speaking, this corollary tells us that if $M/|\omega|$ is greater than the universal constant $M_0$ found in Theorem \ref{Thimp}, the upper constraint $\{\omega \cdot x=M \}$ becomes irrelevant for the minimal minimizer $u_\omega^M$ which achieves values below $-1+\delta_0$ before touching the constraint.

In the next proposition we show that we have an analogous behaviour with the lower constraint $\{\omega \cdot x=0 \}$ and hence we get that the minimal minimizer $u_\omega^M$ is unconstrained.

\begin{prop}\label{minunconstrained}
If $M\ge M_0 |\omega|$, then $u_\omega^M \in \mathcal{M}^{-a,M+a}$ for any $a\ge 0$, i.e.\ $u_\omega^M$ is unconstrained.
\end{prop}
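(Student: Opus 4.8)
The strategy mirrors the proof of Corollary~\ref{coroupper}, where the upper constraint $\{\omega\cdot x=M\}$ was shown to be irrelevant for $u_\omega^M$: I would establish the twin statement for the lower constraint $\{\omega\cdot x=0\}$ and then combine the two to conclude $u_\omega^M=u_\omega^{-a,M+a}$ for every $a\ge0$, which is exactly the assertion. The plan has three steps.

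\textbf{Step 1: a lower-constraint analogue of Theorem~\ref{Thimp}.} I would first prove that, for $M\ge M_0|\omega|$, the distance between the sublevel set $\{u_\omega^M<1-\delta_0\}$ and $\{\omega\cdot x=0\}$ is at least $1$, i.e.\ $u_\omega^M\ge1-\delta_0$ on the slab $\{0\le\omega\cdot x\le|\omega|\}$. One runs the scheme of Theorem~\ref{Thimp}: the density alternative \eqref{claim}, which is symmetric in the two phases and which (by its construction near the mid-hyperplane $\{\omega\cdot x=M/2\}$) produces a ball $B_{\sqrt N}(\bar z)\subset\subset S_\omega^M$ with $\omega\cdot\bar z\ge\sqrt N|\omega|\ge|\omega|$ on which $u_\omega^M$ is identically $\ge1-\delta_0$ or identically $\le-1+\delta_0$. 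Once the second possibility is ruled out, $\{u_\omega^M\ge1-\delta_0\}$ is a Birkhoff set with respect to $\omega$ (Proposition~\ref{Birksubsuplev}) which contains this ball together with the half-space $\{\omega\cdot x\le0\}$ coming from admissibility, so by Proposition~\ref{Birkricito} it contains a half-space $\{\omega\cdot x\le c\}$ with $c\ge\omega\cdot\bar z\ge|\omega|$, giving the claim.

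\textbf{Steps 2--3: passing to the constraints and concluding.} Next I would deduce the lower-constraint analogue of Corollary~\ref{coroupper}: for $M\ge M_0|\omega|$ and $a\in[0,|\omega|]$, Step~1 applied to $u_\omega^{-a,M}$ gives $u_\omega^{-a,M}\ge1-\delta_0$ on $\{\omega\cdot x\le0\}$, hence $u_\omega^{-a,M}\in\mathcal{A}_\omega^{0,M}$ and, being of least energy, $u_\omega^{-a,M}\in\mathcal{M}_\omega^{0,M}$; since trivially $u_\omega^{0,M}\in\mathcal{A}_\omega^{-a,M}$ with least energy, also $u_\omega^{0,M}\in\mathcal{M}_\omega^{-a,M}$; comparing the two minimal minimizers (each lies below the other) yields $u_\omega^{0,M}=u_\omega^{-a,M}$, and iterating in steps of size $|\omega|$ over the strips $S_\omega^{-a,M},S_\omega^{-2a,M},\dots$, all of width $\ge M_0$, one reaches every $a\ge0$. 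Finally, for arbitrary $a\ge0$ Corollary~\ref{coroupper} gives $u_\omega^M=u_\omega^{0,M}=u_\omega^{0,M+a}$ and Step~2 applied to the strip $S_\omega^{0,M+a}$ gives $u_\omega^{0,M+a}=u_\omega^{-a,M+a}$; therefore $u_\omega^M=u_\omega^{-a,M+a}\in\mathcal{M}^{-a,M+a}$, i.e.\ $u_\omega^M$ is unconstrained.

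The main obstacle is the exclusion carried out in Step~1. In Theorem~\ref{Thimp} the phases $+1$ and $-1$ are not on equal footing, because the minimal minimizer is an infimum and hence biased toward the $-1$ phase, so a literal reflection of the argument of Theorem~\ref{Thimp} does not close: the monotonicity extracted from minimality propagates the $-1$ phase toward the upper constraint, which already enforces it, rather than producing a contradiction. One has to feed Theorem~\ref{Thimp} itself back into the argument — using that $u_\omega^M\le-1+\delta_0$ already holds on a unit-width slab below $\{\omega\cdot x=M\}$, both to verify that the translates $\tau_{e_1}u_\omega^M$ (for $\omega_1>0$) stay in $\mathcal{A}_\omega^M$ and to force the half-space delivered by the Birkhoff property in the $+1$ case to overshoot the lower constraint — together with the fact that \eqref{claim} localizes the good ball at height $\approx M/2$. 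Carrying this out carefully is the heart of the proposition; Steps~2 and~3 are then routine comparison arguments of the type already used for Corollary~\ref{coroupper}.
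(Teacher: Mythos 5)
Your plan founders at Step~1, and not merely because it is delicate: the statement you want there is \emph{false} for the minimal minimizer. Suppose $u_\omega^M\ge 1-\delta_0$ on the slab $\{0\le\omega\cdot x\le|\omega|\}$ and assume w.l.o.g.\ $\omega_1>0$. Then $\tau_{-e_1}u_\omega^M$ still lies in $\mathcal{A}_\omega^{0,M}$, has the same energy by \eqref{K3}, \eqref{W4}, \eqref{Hperiodica}, hence belongs to $\mathcal{M}_\omega^{0,M}$, and minimality of the minimal minimizer forces $u_\omega^M(x)\le u_\omega^M(x+e_1)$ a.e.; iterating and letting the translate enter $\{\omega\cdot x\ge M\}$, where $u_\omega^M\le-1+\delta_0$, contradicts $u_\omega^M\ge1-\delta_0$ at the starting point. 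This is exactly the sliding argument inside Theorem~\ref{Thimp}, now working against you: the minimal minimizer is biased toward $-1$, so its interface clings to the lower constraint $\{\omega\cdot x=0\}$ and no lower buffer of the kind you postulate can exist (also, the ball produced by \eqref{claim} sits at an uncontrolled height, and for the actual minimal minimizer a ball at height $\sim M/2$ genuinely lies in the $-1$ phase, so the exclusion you hope for cannot be performed). Since Steps~2--3 are comparison arguments resting entirely on Step~1, the proposal does not establish the proposition.

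The paper's proof avoids any lower-constraint analogue of Theorem~\ref{Thimp}: the lower constraint is made irrelevant not by showing that $u_\omega^M$ stays away from it, but by translating the \emph{competitors}. Given $v\in\mathcal{A}_\omega^{-a,M+a}$, pick $k\in\mathbb{Z}^N$ with $\omega\cdot k\ge a$; then $\tau_k v\in\mathcal{A}_\omega^{0,M+a+\omega\cdot k}$, and Corollary~\ref{coroupper} (which says $u_\omega^M=u_\omega^{M+a'}$ for every $a'\ge0$, i.e.\ $u_\omega^M$ minimizes in every class $\mathcal{A}_\omega^{0,M'}$ with $M'\ge M$) gives $\mathcal{F}_\omega(u_\omega^M)\le\mathcal{F}_\omega(\tau_k v)=\mathcal{F}_\omega(v)$, the last equality again by periodicity; since trivially $u_\omega^M\in\mathcal{A}_\omega^{-a,M+a}$, it is a minimizer there. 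So the asymmetry you correctly sensed between the two constraints is real and is resolved by this one-line translation trick rather than by a second geometric theorem; I suggest you rebuild the proof along these lines.
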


\begin{proof}
Fix $k\in \mathbb{Z}^N$ such that $\omega \cdot k\ge a$. Let $v\in \mathcal{A}_\omega^{-a,M+a}$ and consider its translation $\tau_k v\in \mathcal{A}_\omega^{M+a+\omega \cdot k}$. Corollary \ref{coroupper} tells us that $\mathcal{F}_\omega(u_\omega^M)\le \mathcal{F}_\omega(\tau_k v)$ and the thesis follows since by $\eqref{K3}$, $\eqref{W4}$ and $\eqref{Hperiodica}$, we have that $\mathcal{F}_\omega (v)=\mathcal{F}_\omega(\tau_k v)$.
\end{proof}

We conclude this subsection combining the previous proposition with the results of Subsection \ref{subsec4.1} obtaining that $u_\omega^M$ is a class $A$ minimizer.

\begin{thm}
If $M\ge M_0|\omega|$, then $u_\omega^M$ is a class $A$ minimizer of the functional $\E$.
\end{thm}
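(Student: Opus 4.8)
The plan is to derive class $A$-minimality from the strip-localized minimality of Proposition \ref{minimizerisminE}, using the enlarged admissibility granted by Proposition \ref{minunconstrained}. First I would reduce to open sets: by Remark \ref{minimisottoins} it suffices to show that $u_\omega^M$ is a local minimizer of $\E$ in every bounded \emph{open} $\Omega\subseteq\setR$, since every bounded set sits inside such an $\Omega$ and minimality passes to subsets; finiteness of $\E(u_\omega^M,\Omega)$ follows from the energy estimate of Theorem \ref{stimedensita}. So I fix a bounded open $\Omega$ and a competitor $v$ with $v\equiv u_\omega^M$ off $\Omega$. Since $\Omega$ is bounded, $x\mapsto\omega\cdot x$ is bounded on it, so I can choose $a\ge 0$ with $\Omega\subset\subset S_\omega^{-a,M+a}$, and then (again by boundedness) choose $m\in\mathbb{N}^{N-1}$ so large that the projection $\setR\to\Rt$ is injective on $\Omega$ and $\Omega\subset\subset\tilde S_{\omega,m}^{-a,M+a}$.

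The key step is to show that $u_\omega^M\in\mathcal{M}_{\omega,m}^{-a,M+a}$, i.e.\ that $u_\omega^M$ minimizes $\Fm$ over $\mathcal{A}_{\omega,m}^{-a,M+a}$. Proposition \ref{minunconstrained} already gives that $u_\omega^M$ minimizes $\F$ over $\mathcal{A}_\omega^{-a,M+a}$, and $u_\omega^M$ is $\sim$-periodic, so it certainly belongs to $\mathcal{A}_{\omega,m}^{-a,M+a}$; the real issue is minimality against the coarser periodicity. Here I would rerun the no-symmetry-breaking argument from the proof of Proposition \ref{minimiuguali}, now with the wide strip $S_\omega^{-a,M+a}$ in place of $S_\omega^{A,B}$ and with $u_\omega^M$ cast as the finer-class minimal minimizer: given $v\in\mathcal{A}_{\omega,m}^{-a,M+a}$ with $\Fm(v)<\infty$ and $|v|\le 1+\delta_0$, the translates of $v$ by representatives of the $P:=m_1\cdots m_{N-1}$ cosets of the $\sim$-lattice inside the $\sim_m$-lattice all have $\Fm$-energy equal to $\Fm(v)$ by \eqref{K3}, \eqref{W4}, \eqref{Hperiodica}; their pointwise order statistics are $\sim$-periodic, lie in $\mathcal{A}_\omega^{-a,M+a}$, and — by iterating the two-function inequality of Lemma \ref{maxmin&funct} exactly as in Lemma \ref{mininM} — have total $\Fm$-energy at most $P\,\Fm(v)$. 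Since each of these $P$ order statistics is an admissible $\sim$-periodic competitor, Proposition \ref{minunconstrained} bounds its $\Fm$-energy below by $\Fm(u_\omega^M)=P\,\F(u_\omega^M)$, and summing yields $\Fm(u_\omega^M)\le\Fm(v)$, which is the claim.

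With $u_\omega^M\in\mathcal{M}_{\omega,m}^{-a,M+a}$ in hand, the rest is immediate: Proposition \ref{minimizerisminE}, applied with $A=-a$, $B=M+a$ and the chosen $m$, shows that $u_\omega^M$ is a local minimizer of $\E$ in every open set compactly contained in $\tilde S_{\omega,m}^{-a,M+a}$, hence in $\Omega$, so $\E(u_\omega^M,\Omega)\le\E(v,\Omega)$; and since $\Omega$ was an arbitrary bounded open set, $u_\omega^M$ is a class $A$-minimizer of $\E$. I expect the main obstacle to be precisely the claim $u_\omega^M\in\mathcal{M}_{\omega,m}^{-a,M+a}$: one must be sure that coarsening the periodicity from $\sim$ to $\sim_m$ does not open up lower-energy competitors — this is the doubling property of Subsection \ref{subsection4.3}, and it can be invoked here only because Proposition \ref{minunconstrained} has already promoted $u_\omega^M$ to a genuine minimizer over the wide class $\mathcal{A}_\omega^{-a,M+a}$, not merely over $\mathcal{A}_\omega^{0,M}=\mathcal{A}_\omega^M$.
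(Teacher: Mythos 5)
Your proposal is correct, and its overall skeleton is the paper's: fix a bounded $\Omega$, pick $a\ge 0$ and $m$ with $\Omega\subset\subset\tilde S_{\omega,m}^{-a,M+a}$, show $u_\omega^M\in\mathcal{M}_{\omega,m}^{-a,M+a}$, and conclude via Proposition \ref{minimizerisminE}. Where you diverge is in the middle step. The paper gets membership in $\mathcal{M}_{\omega,m}^{-a,M+a}$ almost for free: Proposition \ref{minimiuguali} (already proved for general $A,B,m$) says the wide-strip minimal minimizer $u_\omega^{-a,M+a}$ lies in $\mathcal{M}_{\omega,m}^{-a,M+a}$, Proposition \ref{minunconstrained} gives $\mathcal{F}_\omega(u_\omega^M)=\mathcal{F}_\omega(u_\omega^{-a,M+a})$, and multiplying by $c_m=\prod_i m_i$ transfers $\sim_m$-minimality to $u_\omega^M$. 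You instead reprove the no-symmetry-breaking property directly for $u_\omega^M$ against an arbitrary competitor $v\in\mathcal{A}_{\omega,m}^{-a,M+a}$, by taking the $P=m_1\cdots m_{N-1}$ translates of $v$ by coset representatives (note the phrasing should be cosets of the $\sim_m$-lattice inside the $\sim$-lattice, not the reverse), sorting them pointwise into order statistics, and iterating Lemma \ref{maxmin&funct}; since each order statistic is $\sim$-periodic and admissible, Proposition \ref{minunconstrained} bounds each of its $\Fm$-energies below by $\Fm(u_\omega^M)=P\,\F(u_\omega^M)$, and summing gives the claim. This is a genuine argument (the sorting-network iteration is needed because, unlike in the proof of Proposition \ref{minimiuguali}, $v$ is not itself a minimizer, so the ``max'' term cannot simply be absorbed), and it has the side benefit of writing out the general-$m$ symmetrization that the paper only sketches (``the general case is analogous''); the price is redoing work that Proposition \ref{minimiuguali} already encapsulates, which is exactly the shortcut the paper takes. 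Two small remarks: your appeal to Theorem \ref{stimedensita} for finiteness of $\E(u_\omega^M,\Omega)$ is unnecessary (and slightly circular as stated, since that theorem presupposes local minimality); finiteness is part of the conclusion of Proposition \ref{minimizerisminE} via Definition \ref{Def1}. Otherwise the argument stands.
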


\begin{proof}
Let $\Omega \subseteq \setR$ be a bounded subset. Take $a\ge 0$ and $m\in \mathbb{Z}^{N-1}$ such that $\Omega \subset \subset ~\tilde{S}_{\omega,m}^{-a,M+a}$. From Proposition \ref{minimiuguali}, it follows that $u_\omega^{-a,M+a}$ is the minimal minimizer of the class $\mathcal{M}_{\omega,m}^{-a,M+a}$. On the other hand Proposition \ref{minunconstrained} implies that $\mathcal{F}_\omega(u_\omega^M)=\mathcal{F}_\omega(u_\omega^{-a,M+a})$. Then
\[
\mathcal{F}_{\omega, m}(u_\omega^M)=c_m \mathcal{F}_\omega (u_\omega^M)=c_m\mathcal{F}_\omega(u_\omega^{-a,M+a})=\mathcal{F}_{\omega,m}(u_\omega^{-a,M+a}),
\]
where $c_m=\Pi_{i=1}^{N-1}{m_i}$.

Therefore $u_\omega^M \in \mathcal{M}_{\omega, m}^{-a,M+a}$ and Proposition \ref{minimizerisminE} yields that $u_\omega^M$ is a local minimizer of $\E$ in $\Omega$.
\end{proof}

\subsection{The case of irrational directions}\label{subsect4.7}

In this subsection we want to prove Theorem~\ref{Mainth} with the assumption $\eqref{K4}$, also for irrational vectors $\omega$. We will use an approximation argument as in \cite{CV}*{Subsection $4.7$}.

Taken $\omega \in \setR \setminus \mathbb{Q}^N$, we consider a sequence $\{\omega_j\}_{j\in \N} \subset \mathbb{Q}^N \setminus \{ 0\}$ such that $\omega_j \rightarrow \omega$. Denoting with $u_j$ the class $A$ minimizer given by our construction which corresponds to $\omega_j$ we know that $u_j \in H_{\text{loc}}^s(\setR) \cap L^\infty(\setR)$ with $|u_j|\le 1+\delta_0$ in $\setR$ and
\begin{equation}\label{irr1}
\Big \{x \in \setR : |u_j(x)|\le  1-\delta_0 \Big \} \subseteq  \Big \{ x \in \setR : \frac{\omega_j}{|\omega_j|}\cdot x \in [0,M_0] \Big \}
\end{equation}
for any $j\in \N$. Moreover Theorem \ref{holderianita} implies that the $u_j$'s are uniformly bounded in $C^{0,\alpha}(\setR)$ for some unversal $\alpha \in (0,1)$. So, thanks to Ascoli-Arzelà Theorem we can find a subsequence of $\{ u_j\}_{j\in \N}$ (not relabeled) converging to some continuous function $u$, uniformly on compact subsets of $\setR$ and $|u|\le 1+\delta_0$ in $\setR$. Since condition $\eqref{irr1}$ passes to the limit, the same inclusion holds if we replace $u_j$ and $w_j$ with $u$ and $w$. Hence, to prove Theorem \ref{Mainth} we only need to check that $u$ is a class $A$ minimizer of $\E$. With this aim in mind we fix $R\ge 1$ and we claim that $u$ is a local minimizer of $\E$ in $B_R$, i.e.\ $\E(u, B_R)<+\infty$ and
\begin{equation}\label{irr2}
\E(u,B_R)\le \E(u+\varphi, B_R)\quad \text{for any}\; \varphi \; \text{such that }\;\text{spt }\varphi \subset B_R.
\end{equation}
Thanks to Remark \ref{minimisottoins}, this will implies that $u$ is a class $A$ minimizer.

To show $\eqref{irr2}$ we apply Theorem \ref{stimedensita} to $u_j$ so
\[
\E(u_j,B_{R+1})\le C_R
\]
for some constant $C_R>0$ independent of $j$. Moreover by an application of Fatou's Lemma
\begin{equation}\label{irr3}
\E(u,B_{R+\tau})\le \liminf_{j\rightarrow +\infty}\E(u_j,B_{R+\tau})
\end{equation}
for any $\tau \in [0,1]$. In particular
\begin{equation}\label{irr4}
\E(u,B_R)\le \E(u,B_{R+1})\le C_R<+\infty
\end{equation}
because $\E(u, \cdot)$ is monotone non-decreasing with respect to set inclusion.

As it concerns the right hand side of $\eqref{irr3}$ we let $\{\eps_j\}_{j\in \N}$ such that
\[
\eps_j:=\|u_j-u\|_{L^\infty(B_{R+1})}.
\]
It is easy to see that $\eps_j\rightarrow 0$ and we may suppose that $\eps_j \le 1/2$ for any $j\in \N$. Then we consider $\eta_j \in C_c^\infty(\setR)$ a cut-off function such that $0\le \eta_j\le 1$ in $\setR$, $\eta_j=1$ in $B_R$, spt $(\eta_j)\subseteq B_{R+\eps_j}$ and $|\nabla \eta_j|\le 2/\eps_j$ in $\setR$. Take $\varphi$ as in $\eqref{irr2}$ and assume w.l.o.g.\ that $\varphi \in L^\infty(\setR)$. We also suppose that $\E(u+\varphi,B_R)<+\infty$, otherwise $\eqref{irr2}$ is obviously satisfied. Consequently, using $\eqref{irr4}$, $\eqref{K2}$ and the boundedness of $H$, $u$ and $\varphi$ we get that $\varphi \in H^s(B_{R+1})$. At this point we define $v:=u+\varphi$ and
\[
v_j:=\eta_j u+(1-\eta_j)u_j+\varphi\quad \text{in}\; \setR.
\]
Observe that $v_j=v$ in $B_R$ and $v_j=u$ in $\setR \setminus B_{R+\eps_j}$, hence $v_j$ is an admissible competitor for $u_j$ in $B_{R+\eps_j}$. Then, being $u_j$ minimizer,
\[
\E(u_j,B_{R+\eps_j})\le \E (v_j, B_{R+\eps_j}).
\]
Moreover $v_j\rightarrow v$ uniformly on compact subsets of $\setR$ and
\[
\|v_j-v\|_{L^\infty(B_{R+1})}\le \|u_j-u\|_{L^\infty(B_{R+1})}=\eps_j.
\]
Now, we can proceed as in \cite{CV}*{Pag. $32-34$} and then we can use
\[
\mathscr{P}(v_j,B_{R+\eps_j})\le \mathscr{P}(v,B_R)+W^*|B_{R+\eps_j} \setminus B_R|+\eta \|u\|_{L^\infty(\setR)}|B_{R+\eps_j} \setminus B_R|
\]
to say that there exists a function $r:(0,1)\rightarrow (0,+\infty)$ for which
\begin{equation}\label{delta0}
\lim_{\delta \rightarrow 0^+}r(\delta)=0
\end{equation}
such that
\[
\limsup_{j\rightarrow +\infty}\E(u_j,B_R)\le \E(v,B_R)+r(\delta).
\]
Combining this inequality with $\eqref{irr3}$ we have
\[
\E(u,B_R)\le \E(v,B_R)+r(\delta)
\]
and since $\delta$ is arbitrary, from $\eqref{delta0}$, we obtain $\eqref{irr2}$ and hence that $u$ is a class $A$ minimizer of $\E$.

\section{Proof of Theorem \ref{Mainth} for general kernels}\label{sec5}
In this section we want to prove Theorem \ref{Mainth} also
for kernel not satisfying condition $\eqref{K4}$. Indeed none of the estimates that we showed there involve any of the parameters appearing in $\eqref{K4}$. So we can use a limit argument similar to this of Section \ref{subsect4.7}.

Let $K$ be a kernel satisfying $\eqref{K1}$, $\eqref{K2}$, $\eqref{K3}$ and consider a monotone increasing sequence $\{R_j\}_{j\in \N}\subset [2,+\infty)$ diverging to $+\infty$. We define
\[
K_j(x,y):=K(x,y)\chi_{[0,R_j]}(|x-y|)\quad \text{for any}\;x,y\in \setR
\]
and we observe that it fulfills $\eqref{K1}$, $\eqref{K2}$, $\eqref{K3}$. Moreover $K_j$ satisfies $\eqref{K4}$ with $\bar{R}=R_j$. Call $\E_j$ the energy functional $\eqref{FinOmega}$ corresponding to $K_j$ and, fixed a direction $\omega \in \setR \setminus \{ 0\}$, we denote with $u_j$ the plane-like class $A$-minimizer for $\E_j$ with direction $\omega$. Since $K_j$ verifies $\eqref{K4}$ these minimizers exist thanks to Section \ref{sec4}. We have
\begin{equation}\label{constraintuj}
\Big \{ x\in \setR :|u_j(x)|\le 1-\delta_0\Big \} \subseteq \Big \{x\in \setR :\frac{\omega}{|\omega|}\cdot x\in [0,M_0]\Big \},
\end{equation}
for a universal value $M_0>0$. We also know that $|u_j|\le 1+\delta_0$ in $\setR$ and, thanks to Theorem \ref{holderianita}, $\|u_j\|_{C^{0,\alpha}(\setR)}\le C$ for some $\alpha \in (0,1]$. We underline that, since $K_j$ satisfies $\eqref{K2}$ with the same structural constants, we can choose $M_0$, $\alpha$ and $C$ independent of $j$. As a consequence, Ascoli-Arzelà Theorem implies that, up to a subsequence, $\{u_j\}$ converges to a continuous function $u$, uniformly on compact subsets of $\setR$. The limit function $u$ satisfies $\eqref{constraintuj}$ and, if $\omega$ is rational, each $u_j$ is $\sim$-periodic, hence $u$ is $\sim$-periodic. To show that $u$ is a class $A$ minimizer, we fix $R\ge 1$ and we take a perturbation $\varphi$ with spt $\varphi\subset \subset B_R$. We know that
\[
\E_j(u_j,B_R)\le \E_j(u_j+\varphi,B_R)\quad \text{for any}\; j\in \N.
\]
On the other hand from an application of Fatou's Lemma we get
\[
\E(u,B_R)\le \liminf_{j\rightarrow +\infty}\E_j(u_j,B_R)
\]
and following the reasoning of the Subsection \ref{subsect4.7} we have that
\[
\limsup_{j\rightarrow +\infty}\E_j(u_j,B_R)\le \E(u+\varphi,B_R).
\]
These two inequalities tell us that $u$ is a class $A$ minimizer of $\E$ so Theorem \ref{Mainth} is completely proved.

\section{Acknowledgements}\label{sec6}
The author is supported by the PRIN project ``Variational methods, with applications to problems in mathematical physics and geometry''. 
The author thanks A. Malchiodi and M. Novaga for the fruitful discussions and their kind support.
\begin{bibdiv}
\begin{biblist}

\bib{B}{article}{
   author={Bangert, V.},
   title={On minimal laminations of the torus},
   language={English, with French summary},
   journal={Ann. Inst. H. Poincar\'e Anal. Non Lin\'eaire},
   volume={6},
   date={1989},
   number={2},
   pages={95--138},
}
	
\bib{BV}{article}{
   author={Birindelli, Isabeau},
   author={Valdinoci, Enrico},
   title={The Ginzburg-Landau equation in the Heisenberg group},
   journal={Commun. Contemp. Math.},
   volume={10},
   date={2008},
   number={5},
   pages={671--719},
}

\bib{CC}{article}{
   author={Cabr\'e, Xavier},
   author={Cinti, Eleonora},
   title={Sharp energy estimates for nonlinear fractional diffusion
   equations},
   journal={Calc. Var. Partial Differential Equations},
   volume={49},
   date={2014},
   number={1-2},
   pages={233--269},
}

\bib{CDLL}{article}{
   author={Caffarelli, Luis A.},
   author={de la Llave, Rafael},
   title={Planelike minimizers in periodic media},
   journal={Comm. Pure Appl. Math.},
   volume={54},
   date={2001},
   number={12},
   pages={1403--1441},
}

\bib{CV}{article}{
   author={Cozzi, Matteo},
   author={Valdinoci, Enrico},
   title={Plane-like minimizers for a non-local Ginzburg-Landau-type energy
   in a periodic medium},
   journal={J. \'Ec. polytech. Math.},
   volume={4},
   date={2017},
   pages={337--388},
}
\bib{DLLV}{article}{
   author={de la Llave, Rafael},
   author={Valdinoci, Enrico},
   title={Multiplicity results for interfaces of Ginzburg-Landau-Allen-Cahn
   equations in periodic media},
   journal={Adv. Math.},
   volume={215},
   date={2007},
   number={1},
   pages={379--426},
}

\bib{DPV}{article}{
   author={Di Nezza, Eleonora},
   author={Palatucci, Giampiero},
   author={Valdinoci, Enrico},
   title={Hitchhiker's guide to the fractional Sobolev spaces},
   journal={Bull. Sci. Math.},
   volume={136},
   date={2012},
   number={5},
   pages={521--573},
   issn={0007-4497},
   review={\MR{2944369}},
   doi={10.1016/j.bulsci.2011.12.004},
}

\bib{M}{article}{
   author={Mather, John N.},
   title={Differentiability of the minimal average action as a function of
   the rotation number},
   journal={Bol. Soc. Brasil. Mat. (N.S.)},
   volume={21},
   date={1990},
   number={1},
   pages={59--70},
}
\bib{Mo}{article}{
   author={Moser, J\"urgen},
   title={Minimal solutions of variational problems on a torus},
   journal={Ann. Inst. H. Poincar\'e Anal. Non Lin\'eaire},
   volume={3},
   date={1986},
   number={3},
   pages={229--272},
}
	
\bib{NV}{article}{
   author={Novaga, Matteo},
   author={Valdinoci, Enrico},
   title={The geometry of mesoscopic phase transition interfaces},
   journal={Discrete Contin. Dyn. Syst.},
   volume={19},
   date={2007},
   number={4},
   pages={777--798},
   issn={1078-0947},
   review={\MR{2342272}},
   doi={10.3934/dcds.2007.19.777},
}
\bib{PV}{article}{
   author={Petrosyan, Arshak},
   author={Valdinoci, Enrico},
   title={Geometric properties of Bernoulli-type minimizers},
   journal={Interfaces Free Bound.},
   volume={7},
   date={2005},
   number={1},
   pages={55--77},
}

\bib{RS}{article}{
   author={Rabinowitz, P. H.},
   author={Stredulinsky, E.},
   title={On some results of Moser and of Bangert},
   language={English, with English and French summaries},
   journal={Ann. Inst. H. Poincar\'e Anal. Non Lin\'eaire},
   volume={21},
   date={2004},
   number={5},
   pages={673--688},
}	
	
\bib{SV12}{article}{
   author={Savin, Ovidiu},
   author={Valdinoci, Enrico},
   title={$\Gamma$-convergence for nonlocal phase transitions},
   journal={Ann. Inst. H. Poincar\'e Anal. Non Lin\'eaire},
   volume={29},
   date={2012},
   number={4},
   pages={479--500},
}
	
\bib{SV}{article}{
   author={Savin, Ovidiu},
   author={Valdinoci, Enrico},
   title={Density estimates for a variational model driven          by the Gagliardo norm},
   journal={J. Math. Pures Appl. (9)},
   volume={101},
   date={2014},
   number={1},
   pages={1--26},
}	

\bib{V}{article}{
   author={Valdinoci, Enrico},
   title={Plane-like minimizers in periodic media: jet flows and
   Ginzburg-Landau-type functionals},
   journal={J. Reine Angew. Math.},
   volume={574},
   date={2004},
   pages={147--185},
}

\end{biblist}
\end{bibdiv}
\end{document}